\newcommand{\rom}[1]{\uppercase\expandafter{\romannumeral #1\relax}}
\newcommand{\beas}{\begin{eqnarray*}}
\newcommand{\enas}{\end{eqnarray*}}
\newcommand{\bea}{\begin{eqnarray}}
\newcommand{\ena}{\end{eqnarray}}
\newcommand{\bms}{\begin{multline*}}
\newcommand{\ems}{\end{multline*}}
\newcommand{\bels}{\begin{align*}}
\newcommand{\enls}{\end{align*}}
\newcommand{\bel}{\begin{align}}
\newcommand{\enl}{\end{align}}
\newcommand{\ignore}[1]{}
\newcommand{\tr}{\mbox{tr\,}}
\numberwithin{equation}{section}
\newtheorem{definition}{Definition}[section]
\newtheorem{theorem}{Theorem}[section]
\newtheorem{corollary}{Corollary}[section]
\newtheorem{proposition}{Proposition}[section]
\newtheorem{lemma}{Lemma}[section]
\newtheorem{remark}{Remark}[section]
\newtheorem{fact}{Fact}
\def\blfootnote{\xdef\@thefnmark{}\@footnotetext}
\newcommand{\expect}[1]{\mathbb{E}{\l[#1\r]}}
\newcommand{\dotp}[2]{\left\langle#1,#2\right\rangle}
\newcommand{\m}{\mathcal}
\newcommand{\mb}{\mathbb}
\newcommand\argmin{\mathop{\mbox{argmin}}}
\newcommand{\mx}{\mbox{\footnotesize{max}\,}}
\newcommand{\mn}{\mbox{\footnotesize{min}\,}}
\newcommand{\sign}{\mathrm{sign}}
\def\r{\right}
\def\l{\left}
\newcommand{\F}{\mathrm{F}}
\newcommand{\wh}{\widehat}
\newcommand{\pr}[1]{\mathrm{Pr}{\left(#1 \right)}}
\begin{document}

\begin{frontmatter}
\title{Robust Modifications of U-statistics and Applications to Covariance Estimation Problems}
\runauthor{S. Minsker and X. Wei}
\runtitle{Robust U-statistics}

\begin{aug}
\author{\fnms{Stanislav} \snm{Minsker}\thanksref{a,e1}\ead[label=e1,mark]{minsker@usc.edu}}
\and
\author{\fnms{Xiaohan} \snm{Wei}\thanksref{b,e2}\ead[label=e2,mark]{xiaohanw@usc.edu}}

\address[a]{Department of Mathematics, University of Southern California, Los Angeles, CA 90089.
\printead{e1}}
\address[b]{Department of Electrical Engineering, University of Southern California, Los Angeles, CA 90089.
\printead{e2}}

\end{aug}

\begin{abstract}
Let $Y$ be a $d$-dimensional random vector with unknown mean $\mu$ and covariance matrix $\Sigma$. 
This paper is motivated by the problem of designing an estimator of $\Sigma$ that admits tight deviation bounds in the operator norm under minimal assumptions on the underlying distribution, such as existence of only 4th moments of the coordinates of $Y$. 
To address this problem, we propose robust modifications of the operator-valued U-statistics, obtain non-asymptotic  guarantees for their performance, and demonstrate the implications of these results to the covariance estimation problem under various structural assumptions. 
\end{abstract}
\begin{keyword}
\kwd{U-statistics}
\kwd{heavy tails}
\kwd{covariance estimation}
\kwd{robust estimators}
\end{keyword}
\end{frontmatter}

\section{Introduction}

In mathematical statistics, it is common to assume that data satisfy an underlying model along with a set of assumptions on this model -- for example, that the sequence of vector-valued observations is i.i.d. and has multivariate normal distribution. 
Since real-world data typically do not fit the model or satisfy the assumptions exactly (e.g., due to outliers and noise), reducing the number and strictness of the assumptions helps to reduce the gap between the ``mathematical'' world and the ``real'' world. The concept of robustness occupies one the central roles in understanding this gap. 
One of the viable ways to model noisy data and outliers is to assume that the observations are generated by a heavy-tailed distribution, and this is precisely the approach that we follow in this work.  

Robust M-estimators introduced by P. Huber \cite{huber1964robust} constitute a powerful method in the toolbox for the analysis of heavy-tailed data. 
Huber noted that ``it is an empirical fact that the best [outlier] rejection procedures do not quite reach the performance of the best robust procedures.'' 
His conclusion remains valid in today's age of high-dimensional data that poses new challenging questions and demand novel methods. 

The goal of this work is to introduce robust modifications for the class of operator-valued U-statistics, which naturally appear in the problems related to estimation of covariance matrices. 
Statistical estimation in the presence of outliers and heavy-tailed data has recently attracted the attention of the research community, and the literature on the topic covers the wide range of topics. 
A comprehensive review is beyond the scope of this section, so we mention only few notable contributions. 
Several popular approach to robust covariance estimation and robust principal component analysis are discussed in \cite{hubert2008high,polyak2017principle,candes2011robust}, including the Minimum Covariance
Determinant (MCD) estimator and the Minimum Volume Ellipsoid estimator (MVE). 
Maronna's \cite{maronna1976} and Tyler's \cite{tyler1987distribution,zhang2016marvcenko} M-estimators are other well-known alternatives. Rigorous results for these estimators are available only for special families of distributions, such as elliptically symmetric. 
Robust estimators based on Kendall's tau have been recently studied in \cite{wegkamp2016adaptive,han2017statistical}, again for the family of elliptically symmetric distributions and its generalizations. 

The papers \cite{catoni2016pac,catoni2017dimension,giulini2016robust} discuss robust covariance estimation for heavy-tailed distributions and are all based on the ideas originating in work \cite{catoni2012challenging} that provided detailed non-asymptotic analysis of robust M-estimators of the univariate mean. 
The present paper can be seen as a direct extension of these ideas to the case of matrix-valued U-statistics, and continues the line of work initiated in \cite{fan2016robust} and \cite{minsker2016sub}; the main advantage of the techniques proposed is that they result in estimators that can be computed efficiently, and cover scenarios beyond covariance estimation problem. 
Recent advances in this direction include the works \cite{fan2017farm} and \cite{wei2017estimation} that present new results on robust covariance estimation; see Remark \ref{remark:comparison} for more details. 

Finally, let us mention the paper \cite{joly2016robust} that investigates robust analogues of U-statistics obtained via the median-of-means technique \cite{alon1996space,devroye2016sub,Nemirovski1983Problem-complex00,lerasle2011robust}. 
We include a more detailed discussion and comparison with the methods of this work in Section \ref{section:main} below. 

The rest of the paper is organizes as follows. 
Section \ref{sec:prelim} explains the main notation and background material. Section \ref{section:main} introduces the main results. 
Implications for covariance estimation problem and its versions are outlined in Section \ref{sec:covariance}. 
Finally, the proofs of the main results are contained in Section \ref{section:proofs}.


\section{Preliminaries}
\label{sec:prelim}

In this section, we introduce main notation and recall useful facts that we rely on in the subsequent exposition.

\subsection{Definitions and notation}
\label{sec:definitions}

Given $A\in \mb C^{d_1\times d_2}$, let $A^\ast\in \mb C^{d_2\times d_1}$ be the Hermitian adjoint of $A$. 
The set of all $d\times d$ self-adjoint matrices will be denoted by $\mb H^d$. 
For a self-adjoint matrix $A$, we will write $\lambda_{\mx}(A)$ and $\lambda_{\mn}(A)$ for the largest and smallest eigenvalues of $A$. Hadamard (entry-wise) product of matrices $A,B\in \mb C^{d_1\times d_2}$ will be denoted $A_1\odot A_2$. 
Next, we will introduce the matrix norms used in the paper. 

Everywhere below, $\|\cdot\|$ stands for the operator norm $\|A\|:=\sqrt{\lambda_{\mx}(A^\ast A)}$. 
If $d_1=d_2=d$, we denote by $\tr A$ the trace of $A$.
Next, for $A\in \mb C^{d_1\times d_2}$, the nuclear norm $\|\cdot\|_1$ is defined as 
$\|A\|_1=\tr(\sqrt{A^*A})$, where $\sqrt{A^*A}$ is a nonnegative definite matrix such that $(\sqrt{A^*A})^2=A^\ast A$. 
The Frobenius (or Hilbert-Schmidt) norm is $\|A\|_{\mathrm{F}}=\sqrt{\tr(A^\ast A)}$, and the associated inner product is 
$\dotp{A_1}{A_2}=\tr(A_1^\ast A_2)$. 
Finally, define $\|A\|_{\max}:=\sup_{i,j}|A_{i,j}|$. 
For a vector $Y\in \mb R^d$, $\l\| Y \r\|_2$ stands for the usual Euclidean norm of $Y$. 

\noindent Given two self-adjoint matrices $A$ and $B$, we will write $A\succeq B \ (\text{or }A\succ B)$ iff $A-B$ is nonnegative (or positive) definite.

\noindent Given a random matrix $Y\in \mb C^{d_1\times d_2}$ with $\mb E\|Y\|<\infty$, the expectation $\mb EY$ denotes a $d_1\times d_2$ matrix such that $\l( \mb EY \r)_{i,j} = \mb EY_{i,j}$. For a sequence $Y_1,\ldots,Y_n$ of random matrices, $\mb E_j[\,\cdot \,]$ will stand for the conditional expectation $\mb E[\,\cdot\,|Y_1,\ldots,Y_{j}]$. 

\noindent For $a,b\in \mb R$, set $a\vee b:=\max(a,b)$ and $a\wedge b:=\min(a,b)$. Finally, recall the definition of the function of a matrix-valued argument. 
\begin{definition}
\label{matrix-function}
Given a real-valued function $f$ defined on an interval $\mb T\subseteq \mb R$ and a self-adjoint $A\in \mb H^d$ with the eigenvalue decomposition 
$A=U\Lambda U^\ast$ such that $\lambda_j(A)\in \mb T,\ j=1,\ldots,d$, define $f(A)$ as 
$f(A)=Uf(\Lambda) U^\ast$, where 
\[
f(\Lambda)=f\l( \begin{pmatrix}
\lambda_1 & \,  & \,\\
\, & \ddots & \, \\
\, & \, & \lambda_d
\end{pmatrix} \r)
=\begin{pmatrix}
f(\lambda_1) & \,  & \,\\
\, & \ddots & \, \\
\, & \, & f(\lambda_d)
\end{pmatrix}.
\] 
\end{definition}

Finally, we introduce the Hermitian dilation which allows to reduce the problems involving general rectangular matrices to the case of Hermitian matrices. 
\begin{definition}
Given the rectangular matrix $A\in\mb C^{d_1\times d_2}$, the Hermitian dilation $\m D: \mb C^{d_1\times d_2}\mapsto \mb C^{(d_1+d_2)\times (d_1+d_2)}$ is defined as
\begin{align}
\label{eq:dilation}
&
\m D(A)=\begin{pmatrix}
0 & A \\
A^\ast & 0
\end{pmatrix}.
\end{align}
\end{definition}
Since 
$\m D(A)^2=\begin{pmatrix}
A A^\ast & 0 \\
0 & A^\ast A
\end{pmatrix},$ 
it is easy to see that $\| \m D(A) \|=\|A\|$. 

\subsection{U-statistics}
\label{sec:setup}

Consider a sequence of i.i.d. random variables $X_1,\ldots,X_n$ ($n\geq2$) taking values in a measurable space $(\m S,\mathcal{B})$, and let $P$ be the distribution of $X_1$.  
Assume that $H: \m S^m\rightarrow\mb H^d$ ($2\leq m\leq n$) is a $\mathcal{S}^m$-measurable permutation symmetric kernel, meaning that $H(x_1,\ldots,x_m) = H(x_{\pi_1},\ldots,x_{\pi_m})$ for any $(x_1,\ldots,x_m)\in \m S^m$ and any permutation $\pi$. 
The U-statistic with kernel $H$ is defined as \cite{hoeffding1948class}
\begin{equation}
\label{u-stat}
U_n:=\frac{(n-m)!}{n!}\sum_{(i_1,\ldots,i_m)\in I_n^m}H(X_{i_1},\ldots,X_{i_m}),
\end{equation} 
where $I_n^m:=\{(i_1,\ldots,i_m):~1\leq i_j\leq n,,~i_j\neq i_k~\textrm{if}~j\neq k\}$; clearly, it is an unbiased estimator of $\mb EH(X_1,\ldots,X_m)$. Throughout this paper, we will impose a mild assumption stating that $\mb E\l\| H(X_1,\ldots,X_m)^2 \r\|<\infty$. 

One of the key questions in statistical applications is to understand the concentration of a given estimator around the unknown parameter of interest. Majority of existing results for U-statistics assume that the kernel $H$ is bounded \cite{arcones1993limit}, or that $\l\| \mb EH(X_1,\ldots,X_m)\r\|$ has sub-Gaussian tails \cite{gine2000exponential}. 
However, in the case when only the moments of low orders of $\l\| H(X_1,\ldots,X_m)\r\|$ are finite, deviations of the random variable 
\[
\l\| H(X_1,\ldots,X_m) - \mb EH(X_1,\ldots,X_m)\r\|
\] 
do not satisfy exponential concentration inequalities. 
At the same time, as we show in this paper, it is possible to construct ``robust modifications'' of $U_n$ for which sub-Gaussian type deviation results hold.   
 
In the remainder of this section, we recall several useful facts about U-statistics.  
The projection operator $\pi_{m,k}~(k\leq m)$ is defined as 
\[
\pi_{m,k}H(\mathbf{x}_{i_1},\ldots,\mathbf{x}_{i_k})
:= (\delta_{\mathbf{x}_{i_1}}- P)\ldots(\delta_{\mathbf{x}_{i_k}} - P) P^{m-k}H,
\]
where 
\[
\mathcal{Q}^mH := \int\ldots\int H(\mathbf{y}_1,\ldots,\mathbf{y}_m)dQ(\mathbf{y}_1)\ldots dQ(\mathbf{y}_m),
\] 
for any probability measure $Q$ in $(\m S,\mathcal{B})$, and $\delta_{x}$ is a Dirac measure concentrated at $x\in \m S$. 
For example, $\pi_{m,1}H(x) = \mb E \l[ H(X_1,\ldots,X_m)| X_1=x\r] - \mb E H(X_1,\ldots,X_m)$.
\begin{definition}
An $\mathcal{S}^m$-measurable function $F: \m S^m\rightarrow\mb H^d$ is $P$-degenerate of order $r$ 
($1\leq r<m$), if
\[
\mb EF(\mathbf{x}_1,\ldots,\mathbf{x}_r,X_{r+1},\ldots,X_m)=0,~\forall \mathbf{x}_1,\ldots,\mathbf{x}_r\in \m S,
\]
and $\mb E F(\mathbf{x}_1,\ldots,\mathbf{x}_r, \mathbf{x}_{r+1},X_{r+2},\ldots,X_m)$ is not a constant function. 
Otherwise, $F$ is non-degenerate.
\end{definition}
The following result is commonly referred to as Hoeffding's decomposition; see \cite{Decoupling} for details. 
\begin{proposition}
\label{hoeffding}
The following equality holds almost surely:
\begin{equation*}
U_n=\sum_{k=0}^m{m \choose k}V_n(\pi_{m,k}H),
\end{equation*}
where
\[
V_n(\pi_{m,k}H)=\frac{(n-k)!}{n!}\sum_{(i_1,\ldots,i_k)\in I^k_n}\pi_{m,k}H(X_{i_1},\ldots,X_{i_k}).
\]
\end{proposition}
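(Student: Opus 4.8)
The plan is to establish Hoeffding's decomposition first as a pointwise algebraic identity satisfied by the kernel $H$ on $\m S^m$, and then to substitute this identity into the definition \eqref{u-stat} of $U_n$ and regroup the resulting terms; this is the classical argument (cf. \cite{Decoupling}).

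\emph{Step 1: pointwise expansion of the kernel.} In each of the $m$ arguments of $H$ write $\delta_{x_i}=P+(\delta_{x_i}-P)$, and recall that measures acting on distinct arguments of $H$ commute (Fubini's theorem, which is where the integrability of $H$ is used). Expanding the resulting product by the distributive law,
\[
H(x_1,\ldots,x_m)=\prod_{i=1}^m\bigl(P+(\delta_{x_i}-P)\bigr)H=\sum_{A\subseteq\{1,\ldots,m\}}\Bigl(\prod_{i\in A}(\delta_{x_i}-P)\Bigr)P^{m-|A|}H,
\]
with the convention that the $i$-th factor is applied to the $i$-th argument of $H$; in particular, in the term indexed by $A$ the factors $\delta_{x_i}-P$ act on the arguments $i\in A$ and $P^{m-|A|}$ integrates out the remaining $m-|A|$ arguments. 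By the permutation symmetry of $H$ this term depends only on $(x_i)_{i\in A}$, and comparing with the definition of the projection $\pi_{m,k}$ we see that it equals $\pi_{m,|A|}H\bigl((x_i)_{i\in A}\bigr)$. Grouping subsets by cardinality gives
\[
H(x_1,\ldots,x_m)=\sum_{k=0}^m\ \sum_{\substack{A\subseteq\{1,\ldots,m\}\\ |A|=k}}\pi_{m,k}H\bigl((x_i)_{i\in A}\bigr),
\]
valid for every $(x_1,\ldots,x_m)\in\m S^m$ for which the projections are defined, which under the assumption $\mb E\|H(X_1,\ldots,X_m)^2\|<\infty$ is $P^m$-almost every point.

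\emph{Step 2: substitution and bookkeeping.} Insert the last identity into \eqref{u-stat} and interchange the (finite) summations:
\[
U_n=\sum_{k=0}^m\frac{(n-m)!}{n!}\sum_{\substack{A\subseteq\{1,\ldots,m\}\\ |A|=k}}\ \sum_{(i_1,\ldots,i_m)\in I_n^m}\pi_{m,k}H\bigl((X_{i_j})_{j\in A}\bigr).
\]
Fix $A=\{a_1<\cdots<a_k\}$ of size $k$. The summand depends on $(i_1,\ldots,i_m)$ only through the subtuple $(i_{a_1},\ldots,i_{a_k})$, and the restriction map $I_n^m\to I_n^k$, $(i_1,\ldots,i_m)\mapsto(i_{a_1},\ldots,i_{a_k})$, is onto with every fibre of size $(n-k)(n-k-1)\cdots(n-m+1)=(n-k)!/(n-m)!$. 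Hence the inner sum equals $\frac{(n-k)!}{(n-m)!}\sum_{(l_1,\ldots,l_k)\in I_n^k}\pi_{m,k}H(X_{l_1},\ldots,X_{l_k})$, a quantity not depending on $A$; since there are ${m\choose k}$ subsets $A$ of cardinality $k$, we obtain
\[
U_n=\sum_{k=0}^m{m\choose k}\frac{(n-k)!}{n!}\sum_{(l_1,\ldots,l_k)\in I_n^k}\pi_{m,k}H(X_{l_1},\ldots,X_{l_k})=\sum_{k=0}^m{m\choose k}V_n(\pi_{m,k}H),
\]
which is the asserted equality; it holds pointwise wherever the projections are defined, the ``almost surely'' in the statement reflecting only that $\pi_{m,k}H$ is defined up to a $P^k$-null set.

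The whole argument is bookkeeping rather than anything deep. The two places that need care are: (i) the justification of the binomial-type expansion in Step 1 --- namely that the one-coordinate operations $P$ and $\delta_{x_i}-P$, applied to different arguments of $H$, commute, so that each term of the expansion is an unambiguously defined integral; and (ii) the elementary count in Step 2 of the fibres of the restriction map $I_n^m\to I_n^k$. The permutation symmetry of $H$ plays the structural role of forcing the $A$-indexed terms in Step 1 to collapse onto the single family of projections $\pi_{m,k}H$, $k=0,\ldots,m$.
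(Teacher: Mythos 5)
The paper supplies no proof of this proposition, merely citing \cite{Decoupling}; your argument is the standard one found there: expand each argument via $\delta_{x_i}=P+(\delta_{x_i}-P)$, use permutation symmetry of $H$ to identify each $A$-term with $\pi_{m,|A|}H$, substitute into \eqref{u-stat}, and count the fibres of the restriction map $I_n^m\to I_n^k$ (each of size $(n-k)!/(n-m)!$) to recover the constant $(n-k)!/n!$ and the factor $\binom{m}{k}$. The bookkeeping is correct and complete, so nothing needs to be changed.
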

\noindent For instance, the first order term ($k=1$) in the decomposition is 
\[
m V_n(\pi_{m,1}H)=\frac{m}{n}\sum_{j=1}^n\pi_{m,1}H(X_j).
\]
In this paper, we consider non-degenerate U-statistics which commonly appear in applications such as estimation of covariance matrices and that serve as a main motivation for this paper. It is well-known that 
\[
\mb E \l(U_n- \mb E H(X_1,\ldots,X_m)\r)^2
={n \choose m}^{-1}\sum_{k=1}^m{m \choose k}{n-m \choose m-k}\Sigma_k^2,
\]
where $\Sigma_k^2 =\mb E\big(\pi_{m,k}H(X_1,X_2,\ldots,X_k)\big)^2$, $k=1,\ldots,m$.
As $n$ gets large, the first term in the sum above dominates the rest that are of smaller order, so that 
\[
\left\|\expect{(U_n-\mathcal{P}^mH)^2}\right\|
= \left\|{n \choose m}^{-1}m{n-m \choose m-1}\Sigma_1^2\right\| + o(n^{-1})
= \Big\|\frac{m^2}{n}\Sigma_1^2 \Big\| + o(n^{-1})
\]
as $n\to\infty$.

\section{Robust modifications of U-statistics}
\label{section:main}

The goal of this section is to introduce the robust versions of U-statistics, and state the main results about their performance. 
\noindent Define 
\begin{equation}
\label{eq:psi}
\psi(x) = 
\begin{cases}
1/2, & x >1,\\
x - \sign(x)\cdot x^2/2, & |x|\leq 1, \\
-1/2, & x<-1
\end{cases}
\end{equation}
and its antiderivative 
\begin{equation}
\label{eq:psi2}
\Psi(x) = 
\begin{cases}
\frac{x^2}{2} - \frac{|x|^3}{6}, & |x| \leq 1,\\
\frac{1}{3} + \frac{1}{2}(|x|-1),  & |x|>1.
\end{cases}
\end{equation}
The function $\Psi(x)$ is closely related to Huber's loss \cite{huber2011robust}; concrete choice of $\Psi(x)$ is motivated by its properties, namely convexity and the fact that its derivative $\psi(x)$ is operator Lipschitz and bounded (see Lemma \ref{lemma:optimization} below). 
\begin{figure}[t]
 \centering
  \subfloat[$\psi(x)$]{
    \boxed{\includegraphics[width=0.5\textwidth]{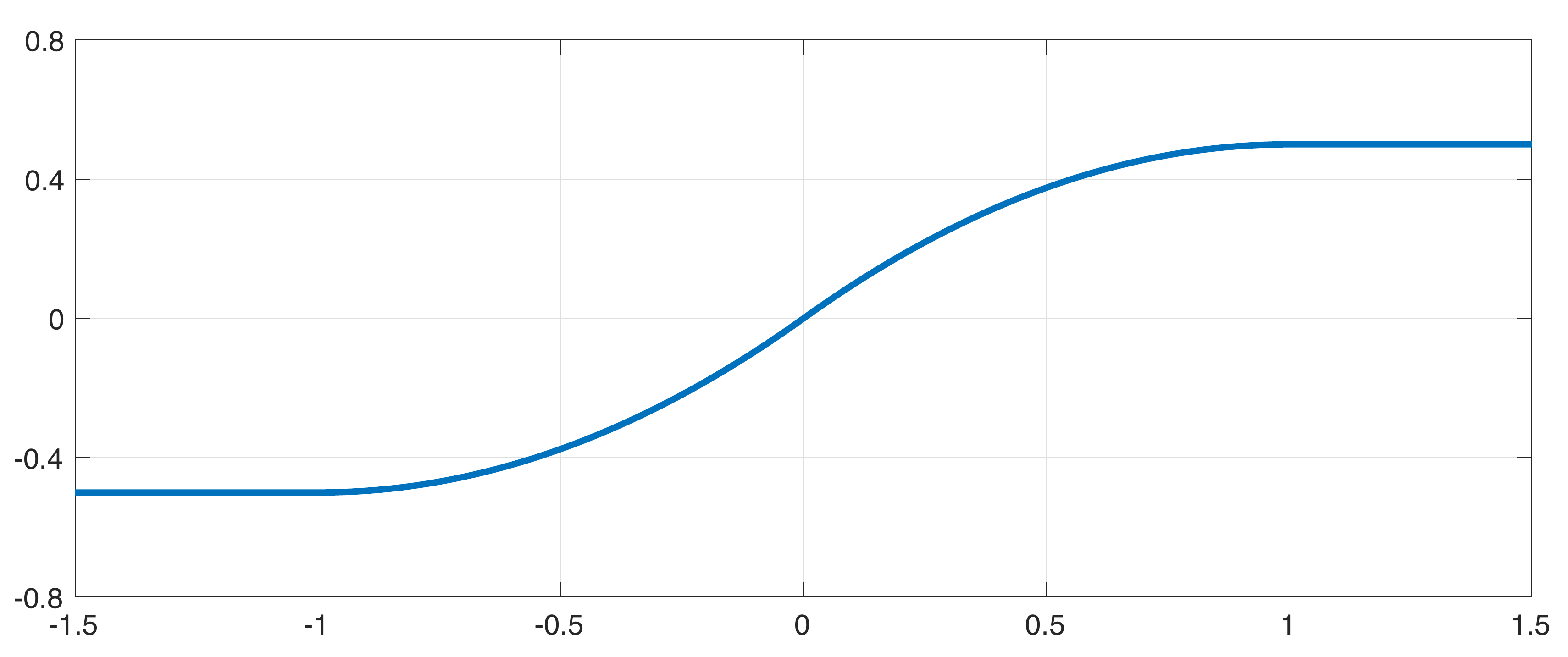}}
    \label{psi1}}
  \subfloat[$\Psi(x)$]{
   \boxed{ \includegraphics[width=0.5\textwidth]{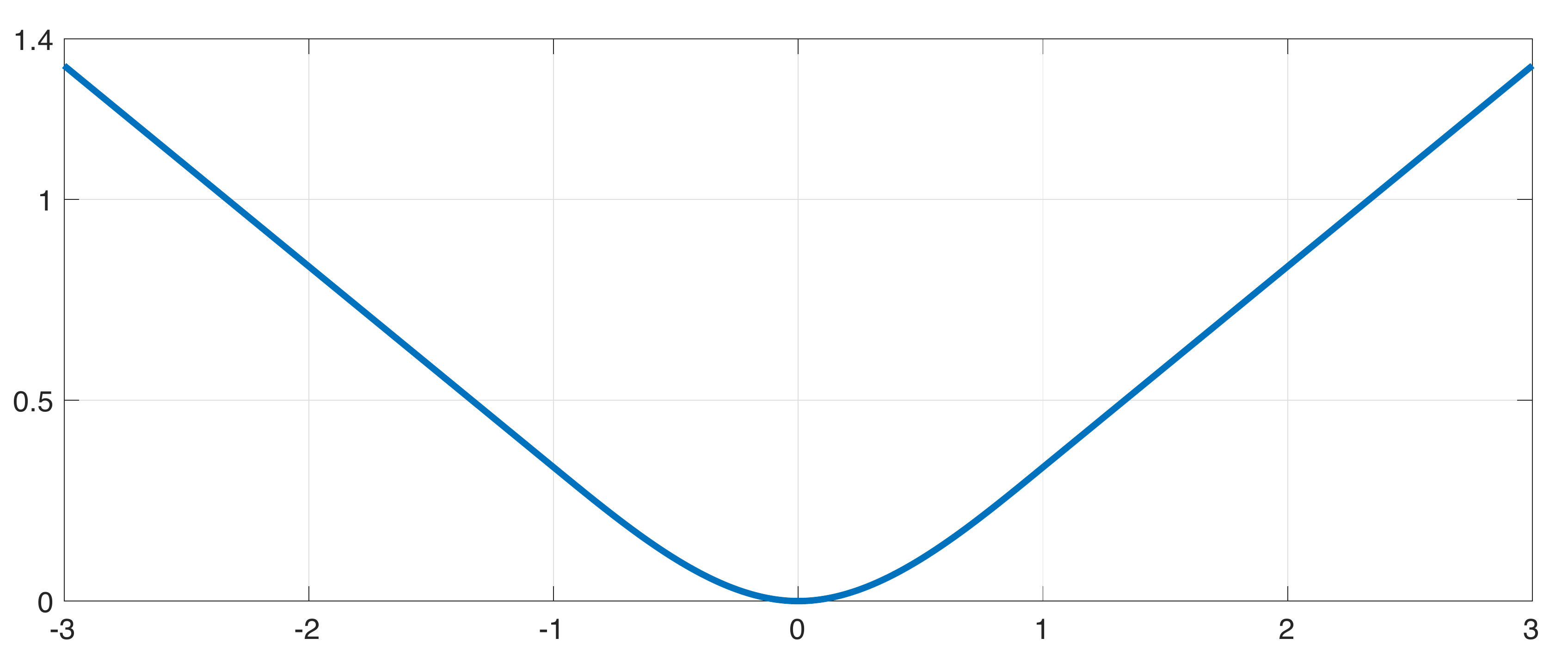}}
    \label{psi2}}
  \caption{Graphs of the functions $\psi(x)$ and $\Psi(x)$.}
\end{figure}
Let $U_n$ be $\mb H^d$-valued U-statistic,
\[
U_n:=\frac{(n-m)!}{n!}\sum_{(i_1,\ldots,i_m)\in I_n^m} H(X_{i_1},\ldots,X_{i_m}).
\]
Since $U_n$ is the average of matrices of the form $H(X_{i_1},\ldots,X_{i_m}), \ (i_1,\ldots,i_m)\in I_n^m,$ it can be equivalently written as 
\begin{align*}
U_n & = \argmin_{U\in \mb H^d} 
\sum_{(i_1,\ldots,i_m)\in I_n^m} \left\| H(X_{i_1},\ldots,X_{i_m}) - U\right\|^2_{\mathrm{F}} \\
& 
=\argmin_{U\in \mb H^d} \tr\Bigg[ \sum_{(i_1,\ldots,i_m)\in I_n^m} 
\left( H(X_{i_1},\ldots,X_{i_m}) - U \right)^2 \Bigg].
\end{align*}
A robust version of $U_n$ is then defined by replacing the quadratic loss by (rescaled) loss $\Psi(x)$. 
Namely, let $\theta>0$ be a scaling parameter, and define 
\begin{align}
\label{eq:estimator1}
\wh U_n^\star & = 
\argmin_{U\in \mb H^d} \tr\Bigg[ \sum_{(i_1,\ldots,i_m)\in I_n^m} 
\Psi\Big( \theta\left( H(X_{i_1},\ldots,X_{i_m}) - U\right)\Big) \Bigg].
\end{align}
For brevity, we will set 
\[
H_{i_1\ldots i_m}:= H(X_{i_1},\ldots,X_{i_m}) \text{ and } \mb EH:=\mb EH_{i_1\ldots i_m}
\]
in what follows. 
Define 
\begin{align}
\label{eq:F}
&
F_\theta(U):= \frac{1}{\theta^2}\frac{(n-m)!}{n!}\tr\Bigg[ \sum_{(i_1,\ldots,i_m)\in I_n^m} 
\Psi\Big( \theta\left( H_{i_1\ldots i_m} - U\right)\Big) \Bigg].
\end{align}
Clearly, $\wh U_n^\star$ can be equivalently written as 
\[
\wh U_n^\star = \argmin_{U\in \mb H^d} \tr \l[ F_\theta(U) \r].
\]
The following result describes the basic properties of this optimization problem.
\begin{lemma}
\label{lemma:optimization}
The following statements hold:
\begin{enumerate}
\item Problem \eqref{eq:estimator1} is a convex optimization problem.
\item The gradient $\nabla F_\theta(U)$ can be represented as 
\[
\nabla F_\theta(U) = -\frac{1}{\theta}\frac{(n-m)!}{n!}\sum_{(i_1,\ldots,i_m)\in I_n^m} 
\psi\Big(\theta\l( H_{i_1\ldots i_m} - U\r) \Big).
\] 
Moreover, $\nabla F_\theta(\cdot): \mb H^d\mapsto \mb H^d$ is Lipschitz continuous in Frobenius and operator norms with Lipschitz constant $1$. 
\item Problem \eqref{eq:estimator1} is equivalent to 
\begin{align}
\label{eq:estimator3}
\sum_{(i_1,\ldots,i_m)\in I^m_n}\psi\Big( \theta \left(H_{i_1\ldots i_m} - \wh U_n^\star\right) \Big) = 0_{d\times d}.
\end{align}
\end{enumerate}
\end{lemma}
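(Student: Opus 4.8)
The plan is to establish the three claims in sequence, since each builds on the previous one.

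\textbf{Convexity.} First I would show that $U \mapsto \tr\bigl[\Psi(\theta(H_{i_1\ldots i_m} - U))\bigr]$ is convex on $\mb H^d$ for each fixed argument matrix; summing over $(i_1,\ldots,i_m)\in I_n^m$ then preserves convexity and gives claim (1). The key analytic input is that $\Psi$ as defined in \eqref{eq:psi2} is a convex, continuously differentiable, even function on $\mb R$. The standard fact I would invoke is that if $g:\mb R\to\mb R$ is convex, then $A\mapsto \tr[g(A)]$ is convex on $\mb H^d$ (this follows from the variational characterization of eigenvalues, or from the fact that $\tr[g(A)] = \sum_j g(\lambda_j(A))$ together with the Schur–Horn majorization/Peierls inequality). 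Composing with the affine map $U\mapsto \theta(H_{i_1\ldots i_m}-U)$ keeps convexity. It remains to verify the elementary one-variable facts about $\Psi$: that it is $C^1$ with derivative $\psi$, that $\psi$ is nondecreasing (hence $\Psi$ convex) — one checks the pieces on $|x|\le 1$ and $|x|>1$ match in value and first derivative at $x=\pm 1$, and that $\psi'(x) = 1 - |x| \ge 0$ on $(-1,1)$ and $\psi'=0$ outside.

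\textbf{Gradient formula and Lipschitz bound.} For claim (2), I would differentiate $F_\theta$. Using the chain rule for the spectral function $A \mapsto \tr[\Psi(A)]$, whose gradient is $\Psi'(A) = \psi(A)$ in the sense of Definition \ref{matrix-function} (Daleckii–Krein / standard matrix calculus for trace functions), and the affine substitution, one gets $\nabla F_\theta(U) = -\frac{1}{\theta}\frac{(n-m)!}{n!}\sum \psi(\theta(H_{i_1\ldots i_m}-U))$ as stated, where the overall $\frac{1}{\theta^2}$ in the definition of $F_\theta$ combines with the $\theta$ from differentiating the inner affine map. For the Lipschitz bound, it suffices to show each summand $U\mapsto \psi(\theta(H_{i_1\ldots i_m}-U))$ is $\theta$-Lipschitz in both norms, since then dividing by $\theta$ and averaging gives constant $1$. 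This reduces to showing $\psi(\cdot):\mb H^d\to\mb H^d$ is operator-Lipschitz with constant $1$ in Frobenius and operator norms. The Frobenius case follows from the fact that $\psi$ is $1$-Lipschitz on $\mb R$ (its derivative is bounded by $1$) combined with the well-known bound $\|f(A)-f(B)\|_{\mathrm F}\le \mathrm{Lip}(f)\,\|A-B\|_{\mathrm F}$ for $1$-Lipschitz $f$; the operator-norm case is more delicate, but since $\psi$ extends to a $C^1$ function with bounded derivative, I would cite the result that a function whose derivative has bounded modulus of continuity (here $\psi'$ is itself $1$-Lipschitz, so $\psi$ is operator-Lipschitz — cf. results on operator-Lipschitz functions, e.g. $\psi$ being twice differentiable with bounded second derivative suffices). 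I expect this operator-norm Lipschitz claim to be the main obstacle, as the naive scalar argument does not transfer, and one needs either a Birman–Solomyak-type estimate or the explicit structure of $\psi$.

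\textbf{First-order optimality.} Claim (3) is then immediate: by (1) the problem \eqref{eq:estimator1} is convex and by (2) $F_\theta$ is differentiable everywhere, so $\wh U_n^\star$ minimizes $\tr[F_\theta(U)]$ if and only if $\nabla F_\theta(\wh U_n^\star) = 0_{d\times d}$; substituting the gradient formula from (2) and multiplying through by $-\theta \frac{n!}{(n-m)!}$ yields exactly \eqref{eq:estimator3}. One should also note that a minimizer exists — this follows since $\tr[F_\theta(U)] \to \infty$ as $\|U\|\to\infty$ (the loss $\Psi$ grows linearly, so the trace of the sum is coercive in $U$ once $n \ge m$), and it is continuous, so a minimizer is attained, though the statement as phrased only asserts equivalence of the two formulations.
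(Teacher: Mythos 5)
Your treatment of parts (1) and (3), and of the gradient formula and Frobenius-norm Lipschitz bound in part (2), matches the paper's argument in substance. The gap is in the operator-norm Lipschitz claim, which you correctly flag as the delicate step but do not actually close. The criterion you propose to invoke --- that $\psi$ is ``twice differentiable with bounded second derivative'' --- does not apply here: $\psi$ is only $C^{1,1}$, not $C^2$, since $\psi'(x)=1-|x|$ on $[-1,1]$ and $\psi'\equiv 0$ outside, so $\psi''$ fails to exist at $x=0$ and $x=\pm1$. More importantly, mere $C^1$-regularity with bounded (or even Lipschitz) derivative is not, on its own, a theorem you can cite for operator Lipschitzness with an explicit constant of exactly $1$; you would need a Besov-space sufficient condition (Peller's $B^1_{\infty,1}$) and then still have to extract the constant.

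The paper's route is more direct and is the key idea you are missing: observe that $\psi'$ is the triangle (Fej\'er) kernel supported on $[-1,1]$, which is the Fourier transform of the nonnegative integrable function $\mathrm{sinc}^2(y)=\bigl(\tfrac{\sin(\pi y)}{\pi y}\bigr)^2$, hence $\psi'$ is a positive-definite function in Bochner's sense. Corollary 1.1.2 of Aleksandrov--Peller then asserts that a $C^1$ function whose derivative is positive definite is operator Lipschitz with Lipschitz constant exactly $g'(0)$, which equals $\psi'(0)=1$ here. This gives both the operator-Lipschitz property and the sharp constant in one stroke, without any appeal to second derivatives. You should replace your vague citation with this specific argument (or with the paper's reference). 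The Frobenius-norm half is handled as you say, via the elementary fact that a scalar-Lipschitz function is matrix-Lipschitz in Hilbert--Schmidt norm with the same constant.
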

Proofs of these facts are given in Section \ref{proof:opt}. Next, we present our main result regarding the performance of the estimator $\wh U_n^\star$. 
Define the \textit{effective rank} \cite{vershynin2010introduction} of a nonnegative definite matrix $A\in \mb H^d$ as
\[
\mathrm{r}(A) = \frac{\tr A}{\| A \|}.
\]
It is easy to see that for any matrix $A\in \mb H^d$, $\mathrm{r}(A)\leq d$. 
We will be interested in the effective rank of the matrix $\mb E \l( H_{1\ldots m} - \mb EH \r)^2$, and will denote 
\[
\mathrm{r}_H:= \mathrm{r}\l( \mb E \l( H_{1\ldots m} - \mb EH \r)^2\r).
\]
\begin{theorem}
\label{thm:new-performance}
Let $k=\lfloor n/m \rfloor$, and assume that $t>0$ is such that 
\[
\mathrm{r}_H\frac{ t}{k}\leq \frac{1}{104}.
\]
Then for any $\sigma\geq \| \mb E \l( H_{1\ldots m} - \mb EH \r)^2 \|^{1/2}$ and 
$\theta:=\theta_\sigma=\frac{1}{\sigma}\sqrt{\frac{2t}{k}}$, 
\[
\l\| \wh U_n^\star - \mb EH\r\| \leq 23\sigma\sqrt{\frac{t}{k}}
\]
with probability $\geq 1-(4d+1)e^{-t}$.
\end{theorem}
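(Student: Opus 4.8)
The plan is to reduce the analysis of the (non-degenerate) U-statistic $\wh U_n^\star$ to that of a sum of i.i.d. matrices via a partitioning argument, and then to run a Catoni-type ``influence function'' bound on that sum. First, I would invoke the characterization \eqref{eq:estimator3}: $\wh U_n^\star$ is the zero of $U\mapsto \sum_{(i_1,\ldots,i_m)\in I_n^m}\psi(\theta(H_{i_1\ldots i_m}-U))$. By the standard Hoeffding averaging trick for U-statistics, the full sum over $I_n^m$ can be written as an average over permutations $\pi$ of $\{1,\ldots,n\}$ of the ``block'' sums $\sum_{j=1}^{k}\psi\big(\theta(H(X_{\pi((j-1)m+1)},\ldots,X_{\pi(jm)})-U)\big)$ with $k=\lfloor n/m\rfloor$; each such block sum is a sum of $k$ i.i.d.\ matrices $\psi(\theta(Z_j-U))$, where $Z_1,\ldots,Z_k$ are i.i.d.\ copies of $H_{1\ldots m}$. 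Since $\psi$ (applied in the matrix-functional-calculus sense of Definition \ref{matrix-function}) is operator-monotone-like and the map $U\mapsto \tr[F_\theta(U)]$ is convex (Lemma \ref{lemma:optimization}(1)), it suffices to control, for a fixed deterministic $U_0$ at distance $\approx 23\sigma\sqrt{t/k}$ from $\mb EH$ along any direction $v$, the sign of $\langle v, \nabla(\tr F_\theta)(U_0) v\rangle$ — i.e.\ to show that on a high-probability event the gradient points ``inward,'' which by convexity forces the minimizer into the ball. This is the matrix analogue of showing a scalar monotone estimating equation has its root in an interval.

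Concretely, fix a unit vector $v\in\mb R^d$ and consider the scalar function $g(r):= \langle v, \,\frac{1}{\theta}\frac{(n-m)!}{n!}\sum_{I_n^m}\psi(\theta(H_{i_1\ldots i_m}-\mb EH - r v v^\ast))\,v\rangle$ near $r=0$. Using the elementary numeric bound $-\log(1-x+x^2/2)\le \psi(x)\le \log(1+x+x^2/2)$ (the defining property of this particular $\Psi$), together with the matrix analogue obtained by the Lieb/Tropp concavity machinery — i.e.\ bounding $\mb E\,\mathrm{tr}\exp(\text{sum})$ by a product of scalar exponentials — I would establish that with probability at least $1-2d\,e^{-t}$,
\[
\Big\langle v,\ \tfrac1\theta\,\mb E_{\mathrm{perm}}\!\!\sum_{j=1}^k \psi\big(\theta(Z_j-\mb EH)\big)\,v\Big\rangle \ \le\ \theta k\,\sigma^2 + \frac{t}{\theta},
\]
and a matching lower bound, so that after plugging in $\theta_\sigma=\frac1\sigma\sqrt{2t/k}$ the random gradient at $U=\mb EH$ is of size $\lesssim \sigma\sqrt{tk}$ (in the rescaled normalization, $\lesssim \sigma\sqrt{t/k}$). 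The small-bias condition $\mathrm r_H\, t/k\le 1/104$ is exactly what guarantees $\theta_\sigma\|H_{1\ldots m}-\mb EH\|$ is typically $\le 1$ so the truncation in $\psi$ is rarely active, and it is what lets the quadratic term $\theta^2\Sigma_1^2$ dominate the cubic remainder in the expansion of $\Psi$. Then a deterministic convexity/monotonicity step (the derivative of $g$ is bounded below by $\approx \tfrac12\cdot\frac{(n-m)!}{n!}|I_n^m|=\tfrac12$ on the relevant range, using that $\psi'(x)\ge 1/2$ for $|x|\le 1$ and that $\theta$-rescaled increments are small) converts the gradient bound into the distance bound $\|\wh U_n^\star-\mb EH\|\le 23\sigma\sqrt{t/k}$, and a union bound over a $1/4$-net of the sphere (absorbed into the $4d$ factor via the usual net-to-operator-norm comparison) upgrades the fixed-$v$ statement to the operator norm.

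The main obstacle I expect is the matrix (non-commutative) version of Catoni's one-dimensional argument: in the scalar case the key inequality $\mb E e^{\lambda \psi(\theta(X-\mb EX))}\le \text{(something)}$ is elementary, but for matrices one needs the analogous statement for $\mb E\,\mathrm{tr}\exp\big(\sum_j \lambda\psi(\theta(Z_j-\mb EH))\big)$, which requires (i) an operator version of the two-sided inequality $-\log(1-x+x^2/2)\le\psi(x)\le\log(1+x+x^2/2)$ — plausible because $\psi$ is operator Lipschitz and $x\pm x^2/2$ is operator monotone on the relevant interval, but needing care — and (ii) the Lieb-concavity/Tropp subadditivity step to split the trace-exponential of a sum. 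Handling the U-statistic structure on top of this (the blocks $Z_j$ within a fixed permutation are independent, but one then averages over permutations) is the second subtlety; it is resolved by noting the averaging over permutations is a convex operation that only helps, so it suffices to prove the bound for a single partition into $k$ blocks. Controlling the remainder terms $\Sigma_k^2$, $k\ge 2$, in Hoeffding's decomposition (Proposition \ref{hoeffding}) is comparatively routine once $k=\lfloor n/m\rfloor$ is fixed, since they are lower-order in $n$.
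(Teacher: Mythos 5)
Your high-level plan (characterize $\wh U_n^\star$ via the estimating equation \eqref{eq:estimator3}, use the Hoeffding averaging identity of Fact \ref{fact:04} to pass to a block-sum of $k=\lfloor n/m\rfloor$ i.i.d.\ matrices, and then run a Catoni-type exponential-moment argument to control the gradient near $\mb EH$) shares the skeleton of the paper's proof. However there are genuine gaps in the middle steps, and the route you take to close the argument is not viable as written.

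The central problem is the scalarization step. You propose to reduce to scalar quantities $g(r)=\langle v,\nabla(\tr F_\theta)(\mb EH + rvv^\ast)v\rangle$ for unit vectors $v$, and then cover the sphere with a $1/4$-net to absorb the union bound into the $4d$ factor. This fails twice over. First, the "gradient points inward on the boundary of an operator-norm ball" argument requires control of $\langle \nabla\tr F_\theta(U),\,U-\mb EH\rangle$ (Frobenius inner product) for \emph{every} $U$ with $\|U-\mb EH\|=R$; the boundary of the operator-norm ball contains matrices $U-\mb EH$ of arbitrary rank, so a net over rank-one perturbations $rvv^\ast$ does not cover the boundary, and there is no net-to-operator-norm comparison lemma that rescues this. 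Second, even for rank-one perturbations, the projected quantity $\langle v,\psi(\theta(H_{i_1\ldots i_m}-U))v\rangle$ is \emph{not} $\psi(\theta\langle v,(H_{i_1\ldots i_m}-U)v\rangle)$, because a matrix function does not commute with taking a quadratic form unless $v$ is an eigenvector; so the one-dimensional Catoni exponential-moment bound cannot be applied to the projected scalar. For the same reason, the claimed lower bound $g'(r)\gtrsim 1/2$ does not follow from $\psi'(x)\ge 1/2$: differentiating the matrix function $\psi(\theta(H-\mb EH-rvv^\ast))$ in $r$ produces a Fr\'{e}chet derivative (Daleckii--Krein type), not $\psi'$ applied entrywise in the eigenbasis of $vv^\ast$. (Also, $\psi'(x)=1-|x|$, so $\psi'(x)\ge 1/2$ only for $|x|\le 1/2$, not $|x|\le 1$.) The paper sidesteps both problems by never scalarizing: it works throughout with matrix-valued quantities, uses the operator-Lipschitz property of $\psi$ (Fact \ref{fact:06}) and the trace-exponential machinery (Facts \ref{fact:01}, \ref{fact:02}, \ref{fact:legacy}) applied directly to the matrix sum, and uses matrix Bernstein (Fact \ref{fact:bernstein}) to control the linearization error uniformly.

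There are two further misconceptions worth flagging. The paper's proof does not use the "gradient points inward at a fixed boundary shell" device at all; instead it analyzes the gradient-descent iterates $U_n^{(j)}$ started at $U_n^{(0)}=\mb EH$, shows via a recursion $\delta_j \le \gamma\delta_{j-1}+c\sigma\sqrt{t/k}$ (with $\gamma\le 3/4$ under $\mathrm{r}_H t/k\le 1/104$) that all iterates stay in the ball of radius $23\sigma\sqrt{t/k}$, and then passes to the limit $U_n^{(j)}\to\wh U_n^\star$. The two probabilistic inputs are Lemma \ref{lemma:sup-bound} (a uniform-in-$\delta$ bound on the linearization error $L_n(\delta)$, proved by truncating on the good event $\chi_{i_1\ldots i_m}=1$ where $\theta_\sigma\|H-\mb EH\|\le 1/2$, and controlling the number of bad indices via a scalar Chernoff bound tied to $\mathrm{r}_H t/k$) and Lemma \ref{lemma:EH} (the gradient at $\mb EH$ itself, controlled by Fact \ref{fact:legacy}, which relies on the two-sided inequality $-\log(1-x+x^2)\le\psi(x)\le\log(1+x+x^2)$, not $x^2/2$ as in your writeup). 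Finally, you mention controlling "the remainder terms $\Sigma_k^2$, $k\ge2$, in Hoeffding's decomposition (Proposition \ref{hoeffding})" as part of the argument; this conflates the projection/Hoeffding decomposition with the averaging-over-permutations representation (Fact \ref{fact:04}). The latter reduces the U-statistic to a convex combination of i.i.d.\ block sums and entirely bypasses the higher-order projection terms $\Sigma_k^2$, which play no role in the paper's proof.
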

\noindent The proof is presented in Section \ref{proof:new-performance}. 
\begin{remark}
\label{remark:rank}
Condition $\mathrm{r}_H\frac{ t}{k}\leq \frac{1}{104}$ in Theorem \ref{thm:new-performance} can be weakened to 
\[
\frac{\tr\l( \mb E \l( H_{1\ldots m} - \mb EH \r)^2 \r)}{\sigma^2} \frac{t}{k}\leq \frac{1}{104},
\]
where $\sigma^2\geq \| \mb E \l( H_{1\ldots m} - \mb EH \r)^2 \|$. This fact follows from the straightforward modification of the proof of Theorem \ref{thm:new-performance} and can be useful in applications.
\end{remark}

\begin{remark}
\label{remark:mom}
The paper \cite{joly2016robust} investigates robust analogues of univariate U-statistics based on the median-of-means  (MOM) technique. 
This approach can be extended to higher dimensions via replacing the univariate median by an appropriate multivariate generalization (e.g., the spatial median). 
When applied to covariance estimation problem, it yields estimates for the error measured in Frobenius norm; however, is not not clear whether it can be used to obtain the error bounds in the operator norm. 
More specifically, to obtain such a bound via the MOM method, one would need to estimate 
$
\mb E \l\| \frac{1}{n}\sum_{j=1}^n (Y_j - \mb EY)(Y_j - \mb EY)^T - \Sigma \r\|^2,
$
where $Y_1,\ldots,Y_j$ are i.i.d. copies of a random vector $Y\in \mb R^d$ such that $\mb E(Y-\mb EY)(Y-\mb EY)^T=\Sigma$ and $\mb E\|Y\|_2^4<\infty$. 
We are not aware of any existing (non-trivial) upper bounds for the aforementioned expectation that require only 4 finite moments of $\|Y\|_2$. 
On the other hand, it is straightforward to obtain the upper bound in the Frobenius norm as 
$
\mb E \big\| \frac{1}{n}\sum_{j=1}^n (Y_j - \mb EY)(Y_j - \mb EY)^T - \Sigma \big\|_{\mathrm{F}}^2 = 
\frac{1}{n}\l(\mb E\| Y - \mb EY\|_2^4  - \l\| \Sigma \r\|^2_{\mathrm{F}}\r).
$
\end{remark}


\subsection{Construction of the adaptive estimator}
\label{section:lepski}

The downside of the estimator $\wh U_n^\star$ defined in \eqref{eq:estimator1} is the fact that it is not completely data-dependent as the choice of $\theta$ requires the knowledge of an upper bound on 
\[
\sigma_\ast^2 := \l\| \mb E \l( H_{1\ldots m} - \mb EH \r)^2\r\|.
\] 
To alleviate this difficulty, we propose an adaptive construction based on a variant of Lepski's method \cite{lepskii1992asymptotically}.

Assume that $\sigma_{\mn}$ is a known (possible crude) lower bound on $\sigma_\ast$. 
Choose $\gamma>1$, let $\sigma_j := \sigma_{\mn} \gamma^j$, and
for each integer $j\geq 0$, set $t_j:=t+\log \l[j(j+1)\r]$ and 
\[
\theta_j=\theta(j,t)=\sqrt{\frac{2 t_j}{k}}\frac{1}{\sigma_j},
\] 
where $k=\lfloor n/m \rfloor$ as before. 
Let
\[
\wh U_{n,j}=\argmin_{U\in \mb H^d} F_{\theta_j}(U),
\]
with $F_{\theta}$ was defined in \eqref{eq:F}.
Finally, set 
\[
\m L:=\m L(t) = \l\{ l\in \mb N: \ \mathrm{r}_H\frac{t_l}{k} \leq \frac{1}{104}\r\}
\] 
and
\begin{align}
\label{eq:lepski}
j_\ast:=\min\l\{ j\in \m L: \forall l\in \m L, \ l>j ,\ \l\|  \wh U_{n,l} - \wh U_{n,j} \r\|\leq 46\sigma_{l} \sqrt{\frac{ t_l}{k}}  \r\}
\end{align}
and $\widetilde U_n^\star:=\wh U_{n,j_\ast}$; if condition \eqref{eq:lepski} is not satisfied by any $j\in \m L$, we set $j_\ast=+\infty$ and $\widetilde U_n^\star=0_{d\times d}$.
\noindent Let 
\begin{align}
\label{eq:xi}
&
\Xi = \log\l[\l( \Big\lfloor \frac{\log \l(\sigma_\ast/\sigma_{\mn}\r)}{\log \gamma}\Big\rfloor+1\r)\l(\Big\lfloor \frac{\log \l(\sigma_\ast/\sigma_{\mn}\r)}{\log \gamma}\Big\rfloor+2 \r)\r].
\end{align}
\begin{theorem}
\label{th:lepski}
Assume that $t>0$ is such that 
\[
\mathrm{r}_H\frac{(t+\Xi)}{k}\leq \frac{1}{104}.
\]
Then with probability $\geq 1 - (4d+1)e^{-t}$,
\[
\l\| \widetilde U_n^\star - \mb EH \r\| \leq
69\gamma\cdot\sigma_\ast \sqrt{\frac{t+\Xi}{k}},
\] 
\end{theorem}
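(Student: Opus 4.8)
The plan is to follow the standard Lepski-method argument, using Theorem \ref{thm:new-performance} as a black box applied simultaneously to all scales $j$ in the admissible set $\m L$. First I would identify the ``oracle'' index $j_o := \min\{ j \geq 0 : \sigma_j \geq \sigma_\ast \}$, i.e.\ the smallest index for which the built-in scale $\sigma_j$ dominates the true quantity $\sigma_\ast$. By the definition of $\sigma_j = \sigma_{\mn}\gamma^j$ one has $j_o = \lceil \log(\sigma_\ast/\sigma_{\mn})/\log\gamma\rceil$ and, crucially, $\sigma_{j_o} \leq \gamma\,\sigma_\ast$; moreover $\Xi$ defined in \eqref{eq:xi} is precisely $\log[(j_o+1)(j_o+2)]$ up to the floor/ceiling discrepancy, so that $t_{j_o} = t + \log[j_o(j_o+1)] \leq t + \Xi$ and the hypothesis $\mathrm{r}_H (t+\Xi)/k \leq 1/104$ guarantees $j_o \in \m L$.

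Next I would set up the good event. For each $j \in \m L$, apply Theorem \ref{thm:new-performance} with $t$ replaced by $t_j$ and $\sigma$ replaced by $\sigma_j$ (legitimate for $j\geq j_o$ since then $\sigma_j \geq \sigma_\ast \geq \|\mb E(H_{1\ldots m}-\mb EH)^2\|^{1/2}$): on an event $\m A_j$ of probability $\geq 1 - (4d+1)e^{-t_j}$ we have $\|\wh U_{n,j} - \mb EH\| \leq 23\,\sigma_j\sqrt{t_j/k}$. Define $\m A := \bigcap_{j \geq j_o,\, j\in\m L} \m A_j$. Since $t_j = t + \log[j(j+1)]$, a union bound gives $\pr{\m A^c} \leq (4d+1)e^{-t}\sum_{j\geq 1} \frac{1}{j(j+1)} = (4d+1)e^{-t}$, which is exactly the claimed failure probability. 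From here the argument is deterministic on $\m A$.

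On $\m A$, I would argue in two steps. \textbf{Step 1 (the oracle index is admissible for the Lepski test, so $j_\ast \leq j_o$).} For any $l \in \m L$ with $l > j_o$, the triangle inequality and the bounds on $\m A$ give
\[
\l\| \wh U_{n,l} - \wh U_{n,j_o} \r\| \leq 23\sigma_l\sqrt{\tfrac{t_l}{k}} + 23\sigma_{j_o}\sqrt{\tfrac{t_{j_o}}{k}} \leq 46\,\sigma_l\sqrt{\tfrac{t_l}{k}},
\]
where the last inequality uses that $\sigma_{j_o}\sqrt{t_{j_o}} \leq \sigma_l\sqrt{t_l}$ because both $\sigma_j$ and $t_j$ are nondecreasing in $j$. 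Hence $j_o$ satisfies the defining condition \eqref{eq:lepski}, so $j_\ast$ is finite and $j_\ast \leq j_o$. \textbf{Step 2 (bounding the error at $j_\ast$).} Since $j_\ast \leq j_o$ and $j_o \in \m L$ with $j_o \geq j_\ast$, we may invoke the Lepski condition \eqref{eq:lepski} that $j_\ast$ itself satisfies, with $l = j_o$ (if $j_\ast < j_o$; the case $j_\ast = j_o$ is immediate from Step's bound), to get $\|\wh U_{n,j_o} - \wh U_{n,j_\ast}\| \leq 46\,\sigma_{j_o}\sqrt{t_{j_o}/k}$. Combining with $\|\wh U_{n,j_o} - \mb EH\| \leq 23\,\sigma_{j_o}\sqrt{t_{j_o}/k}$ yields
\[
\l\| \widetilde U_n^\star - \mb EH\r\| = \l\| \wh U_{n,j_\ast} - \mb EH\r\| \leq 69\,\sigma_{j_o}\sqrt{\tfrac{t_{j_o}}{k}} \leq 69\gamma\,\sigma_\ast\sqrt{\tfrac{t+\Xi}{k}},
\]
using $\sigma_{j_o}\leq \gamma\sigma_\ast$ and $t_{j_o}\leq t+\Xi$ in the last step.

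The main obstacle I anticipate is purely bookkeeping rather than conceptual: one must be careful that the floor in the definition \eqref{eq:xi} of $\Xi$ matches the ceiling appearing in $j_o$, and that the constant $23$ from Theorem \ref{thm:new-performance}, once doubled by the triangle inequality and again by the matching of the test threshold, reproduces exactly the constants $46$ and $69$ in the statement — in particular that the test threshold $46\sigma_l\sqrt{t_l/k}$ in \eqref{eq:lepski} is chosen to be exactly $2\times 23$ so that Step 1 goes through. A secondary point to verify is that $j_\ast \in \m L$ (guaranteed by the definition of $j_\ast$ as a minimum over $\m L$), which is what makes $\wh U_{n,j_\ast}$ a bona fide estimator covered by the concentration bound; and that one never needs to apply Theorem \ref{thm:new-performance} at scales $j < j_o$, where $\sigma_j$ may fail to dominate $\sigma_\ast$ — indeed the argument only ever uses the concentration bound at $j = j_o$ and at indices $l > j_o$.
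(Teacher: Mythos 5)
Your proposal is correct and follows essentially the same Lepski-method argument as the paper: identify the oracle index $\bar j$, apply Theorem~\ref{thm:new-performance} at every scale $l \geq \bar j$ in $\m L$ with a union bound over $t_l = t + \log[l(l+1)]$, and then combine the triangle inequality with the Lepski test to get $\l\| \widetilde U_n^\star - \mb EH \r\| \leq (46+23)\,\sigma_{\bar j}\sqrt{t_{\bar j}/k}$. The only bookkeeping quirk is that the paper defines the oracle index as $\bar j = \min\{j \geq 1 : \sigma_j \geq \sigma_\ast\}$ rather than over $j \geq 0$ (so that $t_{\bar j}$ stays finite even if $\sigma_\ast = \sigma_{\mn}$); aside from this, the constants, the role of $\Xi$, and the good event match the paper's proof exactly.
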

\noindent In other words, adaptive estimator can be obtained at the cost of the additional multiplicative factor $3\gamma$ in the error bound. 
\begin{proof}
Let $\bar j=\min\l\{  j\geq 1: \ \sigma_j \geq \sigma_\ast\r\}$, and note that 
$\bar j\leq \Big\lfloor \frac{\log \l(\sigma_\ast/\sigma_{\mn}\r)}{\log \gamma}\Big\rfloor+1$ and $\sigma_{\bar j}\leq \gamma \sigma_\ast$. 
Note that condition of Theorem \ref{th:lepski} guarantees that $\bar j \in \m L$. 
We will show that $j_\ast \leq \bar j$ with high probability. 
Indeed,
\begin{align*}
\Pr\l( j_\ast > \r. & \l. \bar j\r)\leq \Pr\l( \bigcup_{l\in \m L: l > \bar j} \l\{  \l\|  \wh U_{n,l} - \wh U_{n,\bar j} \r\| > 46\sigma_{l} \sqrt{\frac{t_j}{k}} \r\} \r)\\
& 
\leq \Pr\l(  \l\|  \wh U_{n,\bar j}  - \mb EH \r\| > 23\sigma_{\bar j} \sqrt{\frac{t_{\bar j}}{k}} \r) + 
\sum_{l\in \m L: l>\bar j}\Pr\l( \l\|   \wh U_{n,l}  - \mb EH \r\| > 23\sigma_{l} \sqrt{\frac{t_l}{k}}  \r)   \\
&
\leq (4d+1) e^{-t}\frac{1}{\bar j(\bar j+1)} + (4d+1) e^{-t} \sum_{l>\bar j}\frac{1}{l(l+1)}\leq (4d+1) e^{-t}.
\end{align*}
where we used Theorem \ref{thm:new-performance} to bound each of the probabilities in the sum. 
The display above implies that the event 
\[
\m B = \bigcap_{l\in \m L: l\geq \bar j} 
\l\{ \l\|  \wh U_{n,l} - \mb EH \r\|\leq 23 \sigma_{l} \sqrt{\frac{t_l}{k}}  \r\} 
\]
of probability $\geq 1-(4d+1) e^{-t}$ is contained in $\m E=\l\{  j_\ast\leq \bar j \r\}$. 
Hence, on $\m B$ we have  
\begin{align*}
\l\| \widetilde U_n^\star - \mb EH \r\|&
\leq \| \widetilde U_n^\star - \wh U_{n,\bar j} \| + \| \wh U_{n,\bar j} - \mb EH \| \leq 
46 \sigma_{\bar j} \sqrt{\frac{t_{\bar j}}{k}} + 23 \sigma_{\bar j} \sqrt{\frac{t_{\bar j}}{k}} \\
&\leq \gamma\cdot 69 \sigma_\ast \sqrt{\frac{t+\Xi}{k}},
\end{align*}
where $\Xi = \log\l[\l( \Big\lfloor \frac{\log \l(\sigma_\ast/\sigma_{\mn}\r)}{\log \gamma}\Big\rfloor+1\r)\l(\Big\lfloor \frac{\log \l(\sigma_\ast/\sigma_{\mn}\r)}{\log \gamma}\Big\rfloor+2 \r)\r]$.
\end{proof}

\subsection{Extension to rectangular matrices}
\label{sec:rectangular}

In this section, we assume a more general setting where $H: \m S^m\mapsto \mb C^{d_1\times d_2}$ is a $\mb C^{d_1\times d_2}$-valued permutation-symmetric function. 
As before, our goal is to construct an estimator of $\mb EH$. 
We reduce this general problem to the case of $\mb H^{d_1+d_2}$-valued functions via the self-adjoint dilation defined in \eqref{eq:dilation}. 
Let 
\[
\m D(H_{i_1\ldots i_m}) = 
\begin{pmatrix}
0 & H(X_{i_1},\ldots,X_{i_m}) \\
\l[H(X_{i_1},\ldots,X_{i_m})\r]^\ast & 0
\end{pmatrix},
\] 
and
\[
\bar U_n^\star = 
\argmin_{U\in \mb H^{d_1+d_2}} \tr\Bigg[ \sum_{(i_1,\ldots,i_m)\in I_n^m} 
\Psi\Big( \theta\left( \m D(H_{i_1\ldots i_m}) - U\right)\Big) \Bigg].
\]
Let $\hat U^\star_{11}\in \mb C^{d_1\times d_1}$, $\hat U^\star_{22}\in \mb C^{d_2\times d_2}$, $\hat U^\star_{12}\in \mb C^{d_1\times d_2}$ be such that $\bar U^\star_n$ can be written in the block form as
$\bar U_n^\star
=\begin{pmatrix}
\hat U^\star_{11} & \hat U^\star_{12} \\
( \hat U^\star_{12} )^\ast & \hat U^\star_{22}
\end{pmatrix}.$ 
Moreover, define 
\[
\sigma^2_\star :=  \max\l( \big\| \mb E (H_{1\ldots m} - \mb EH) (H_{1\ldots m} - \mb EH)^\ast \big\|, \big\|  
\mb E (H_{1\ldots m} - \mb EH)^\ast (H_{1\ldots m} - \mb EH) \big\| \r) 
\]
and 
\[
\tilde{\mathrm{r}}_H:= 2\cdot\frac{\tr\l[ \mb E (H_{1\ldots m} - \mb EH) (H_{1\ldots m} - \mb EH)^\ast\r] }{\sigma^2_\star}.
\]
\begin{corollary}
\label{cor:rectangular}
Let $k=\lfloor n/m \rfloor$, and assume that $t>0$ is such that
\[
\tilde{\mathrm{r}}_H \frac{t}{k} \leq \frac{1}{104}.
\]
Then for any $\sigma \geq \sigma_\star$ and $\theta:=\theta_\sigma=\frac{1}{\sigma}\sqrt{\frac{2t}{k}}$, 
\[
\l\| \hat U^\star_{12} - \mb EH  \r\| 
\leq 23\sigma \sqrt{\frac{t}{k}}
\]
with probability $\geq 1 - \l( 4(d_1+d_2)+1 \r)e^{-t}$.
\end{corollary}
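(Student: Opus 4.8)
The plan is to reduce the rectangular problem to the self-adjoint setting of Theorem~\ref{thm:new-performance} via the Hermitian dilation. Since $\m D$ acts entrywise and $H$ is permutation-symmetric, the map $(x_1,\dots,x_m)\mapsto \m D\big(H(x_1,\dots,x_m)\big)$ is an $\mb H^{d_1+d_2}$-valued permutation-symmetric kernel (and it satisfies the standing moment condition, as $\mb E\|\m D(H)^2\| = \mb E\|H\|^2$). Comparing the definitions, $\bar U_n^\star$ is exactly the estimator $\wh U_n^\star$ of \eqref{eq:estimator1} built from this dilated kernel, with the same $k = \lfloor n/m\rfloor$ and the same scaling $\theta_\sigma$. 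It therefore suffices to verify the hypotheses of Theorem~\ref{thm:new-performance} for the dilated kernel and read off the conclusion.

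For the variance proxy, linearity of $\m D$ gives $\m D(H_{1\ldots m}) - \mb E\m D(H) = \m D(H_{1\ldots m} - \mb EH)$, and since $\m D(A)^2 = \mathrm{diag}(AA^\ast,\,A^\ast A)$,
\[
\mb E\big(\m D(H_{1\ldots m}) - \mb E\m D(H)\big)^2
= \begin{pmatrix}
\mb E(H_{1\ldots m}-\mb EH)(H_{1\ldots m}-\mb EH)^\ast & 0\\
0 & \mb E(H_{1\ldots m}-\mb EH)^\ast(H_{1\ldots m}-\mb EH)
\end{pmatrix}.
\]
The operator norm of this block-diagonal matrix equals $\sigma_\star^2$, and its trace equals $2\tr\big[\mb E(H_{1\ldots m}-\mb EH)(H_{1\ldots m}-\mb EH)^\ast\big]$ because $\tr(AA^\ast)=\tr(A^\ast A)$; hence its effective rank is exactly $\tilde{\mathrm{r}}_H$. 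Consequently the assumption $\tilde{\mathrm{r}}_H\, t/k\le 1/104$ is precisely the rank condition required by Theorem~\ref{thm:new-performance} for the dilated kernel, and ``$\sigma\ge\sigma_\star$'' is precisely ``$\sigma\ge\|\mb E(\m D(H_{1\ldots m})-\mb E\m D(H))^2\|^{1/2}$''.

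Applying Theorem~\ref{thm:new-performance} with $d$ replaced by $d_1+d_2$ then gives, with probability at least $1-(4(d_1+d_2)+1)e^{-t}$, the bound $\|\bar U_n^\star - \m D(\mb EH)\| \le 23\sigma\sqrt{t/k}$. To finish I would pass to the off-diagonal block: $\m D(\mb EH)$ has $(1,2)$-block $\mb EH$, so the $(1,2)$-block of $\bar U_n^\star - \m D(\mb EH)$ equals $\hat U^\star_{12} - \mb EH$, and since the operator norm of a submatrix never exceeds that of the ambient matrix (the block is $P_1\big(\bar U_n^\star - \m D(\mb EH)\big)P_2^\ast$ for coordinate projections with $\|P_1\|,\|P_2\|\le 1$), we obtain $\|\hat U^\star_{12}-\mb EH\|\le\|\bar U_n^\star-\m D(\mb EH)\|\le 23\sigma\sqrt{t/k}$ on the same event, which is the assertion.

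I do not expect a genuine obstacle: the argument is a clean reduction, and the two places needing care are routine. These are the block computation of $\mb E(\m D(H_{1\ldots m})-\mb E\m D(H))^2$ together with the identifications $\|\cdot\| = \sigma_\star^2$ and $\mathrm{r}(\cdot)=\tilde{\mathrm{r}}_H$ — which ensure Theorem~\ref{thm:new-performance} applies with no loss in the constants $23$ and $104$ — and the elementary fact that restricting to a submatrix does not increase the operator norm, which transfers the bound from the full dilated estimator to the block $\hat U^\star_{12}$.
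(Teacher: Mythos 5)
Your proposal is correct and follows essentially the same route as the paper: dilate to the self-adjoint case, observe that $\mb E\big(\m D(H)-\mb E\m D(H)\big)^2$ is block-diagonal with operator norm $\sigma_\star^2$ and effective rank $\tilde{\mathrm{r}}_H$, apply Theorem~\ref{thm:new-performance} with $d$ replaced by $d_1+d_2$, and extract the off-diagonal block. The only cosmetic difference is in the last step: the paper invokes Fact~\ref{fact:dilation} (operator norm of a self-adjoint block matrix dominates that of the dilation of its off-diagonal block), whereas you argue directly that a submatrix has operator norm bounded by the ambient matrix via coordinate projections — these are logically equivalent.
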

The proof is outlined in Section \ref{proof:rectangular}.

\subsection{Computational considerations}
\label{sec:computational}

Since the estimator $\wh U_n^\star$ is the solution of the convex optimization problem \eqref{eq:estimator1}, it can be approximated via the gradient descent. 
We consider the simplest gradient descent scheme with constant step size equal $1$. 
Note that the Lipschitz constant of $F_\theta(U)$ is $L_F=1$ by Lemma \ref{lemma:optimization}, hence this step choice is exactly equal to $\frac{1}{L_F}$. 
Given a starting point $U_0\in \mb H^d$, the gradient descent iteration for minimization of $\tr F_\theta(U)$ is 
\begin{align*}
U^{(0)}_n :&= U_0, \\
U^{(j)}_n :&= U^{(j-1)}_n - \nabla \l( \tr F_\theta\l( U^{(j-1)}_n \r)\r) \\
&
=U^{(j-1)}_n+\frac{1}{\theta}\frac{(n-m)!}{n!}\sum_{(i_1,\ldots,i_m)\in I^m_n}\psi\Big(\theta\l( H_{i_1\ldots i_m} - U^{(j-1)}_n\r) \Big), \ j\geq 1.
\end{align*}
\begin{lemma}
\label{lemma:grad-descent}
The following inequalities hold for all $j\geq 1$:
\[
(a) \quad \tr \l[ F_\theta \l(U_n^{(j)}\r) - F_\theta \l(\wh U_n^\star\r)\r] \leq \frac{\l\| U_0 - \wh U_n^\star \r\|_F^2}{2j};
\]
Moreover, under the assumptions of Theorem \ref{thm:new-performance}, 
\[
(b) \quad \Big\| U_n^{(j)} - \mb EH \Big\| \leq \l( \frac{3}{4} \r)^{j}\l\| U_0 - \mb EH \r\| + 23\sigma\sqrt{\frac{t}{k}}.
\]
\end{lemma}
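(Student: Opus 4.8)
\textbf{Proof proposal for Lemma \ref{lemma:grad-descent}.}

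The plan is to treat the two parts separately. Part (a) is the standard convergence guarantee for gradient descent with constant step size $1/L_F$ applied to a convex function with $L_F$-Lipschitz gradient; here $L_F=1$ by part (2) of Lemma \ref{lemma:optimization}. I would simply invoke (or quickly reproduce) the classical descent-lemma argument: from $L_F$-smoothness one gets $\tr F_\theta(U^{(j)}_n) \leq \tr F_\theta(U^{(j-1)}_n) - \frac{1}{2}\|\nabla(\tr F_\theta(U^{(j-1)}_n))\|_F^2$, and combining this with convexity (the inequality $\tr F_\theta(U^{(j-1)}_n) - \tr F_\theta(\wh U_n^\star) \leq \langle \nabla(\tr F_\theta(U^{(j-1)}_n)), U^{(j-1)}_n - \wh U_n^\star\rangle$) and telescoping yields the $O(1/j)$ rate with constant $\|U_0 - \wh U_n^\star\|_F^2/(2j)$. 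This is textbook (e.g. Nesterov); the only thing to check is that the inner product and norm used are the Frobenius ones, which is exactly the metric in which $\nabla F_\theta$ was shown to be $1$-Lipschitz.

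For part (b) the key observation is that the gradient descent map is in fact a \emph{contraction} in operator norm around the true value $\mb EH$, not merely around $\wh U_n^\star$. Write $G(U) := U + \frac{1}{\theta}\frac{(n-m)!}{n!}\sum_{(i_1,\ldots,i_m)\in I^m_n}\psi(\theta(H_{i_1\ldots i_m} - U)) = U - \nabla(\tr F_\theta(U))$, so that $U^{(j)}_n = G(U^{(j-1)}_n)$. I would decompose
\[
\Big\| U^{(j)}_n - \mb EH \Big\| \leq \Big\| G(U^{(j-1)}_n) - G(\wh U_n^\star)\Big\| + \Big\| \wh U_n^\star - \mb EH\Big\|,
\]
using that $G(\wh U_n^\star) = \wh U_n^\star$ by part (3) of Lemma \ref{lemma:optimization}. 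The second term is bounded by $23\sigma\sqrt{t/k}$ on the high-probability event of Theorem \ref{thm:new-performance}. For the first term I need $\|G(U) - G(V)\| \leq \frac34 \|U-V\|$. Since $\nabla(\tr F_\theta)$ is $1$-Lipschitz and monotone (gradient of a convex function), a contraction factor strictly below $1$ for $G = I - \nabla(\tr F_\theta)$ requires a quantitative \emph{strong-convexity-type} lower bound on the "curvature" of $F_\theta$ along the relevant segment — i.e. that $\psi'(\theta(H_{i_1\ldots i_m}-U))$ is bounded below (after averaging) by a constant like $\frac14$ in the appropriate operator sense, restricted to the region where the iterates and $\mb EH$ live. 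This is the main obstacle.

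To handle it I would mimic the curvature analysis that must already appear in the proof of Theorem \ref{thm:new-performance}: on the good event, $\wh U_n^\star$ and $\mb EH$ are both within $O(\sigma\sqrt{t/k})$ of each other, and one shows that on the ball of radius $\sim \sigma\sqrt{t/k}$ around $\mb EH$ the averaged Hessian of $F_\theta$ (in the directional/operator sense, using that $\psi'(x) = 1-|x| \geq \tfrac34$ for $|x|\le \tfrac14$ together with a concentration bound ensuring that $\theta\|H_{i_1\ldots i_m}-U\|$ is small for a dominant fraction of tuples) is $\succeq \tfrac14 I$ in the relevant sense, while trivially $\preceq I$. An induction on $j$ then keeps every iterate $U^{(j)}_n$ inside this ball: if $U^{(j-1)}_n$ is in the ball, the contraction estimate plus the triangle inequality show $U^{(j)}_n$ is too, and unrolling the recursion $\|U^{(j)}_n - \mb EH\| \leq \frac34\|U^{(j-1)}_n - \mb EH\| + 23\sigma\sqrt{t/k}$ — careful: one actually iterates the contraction against $\wh U_n^\star$ — gives $\|U^{(j)}_n - \mb EH\| \leq (3/4)^j \|U_0 - \mb EH\| + 23\sigma\sqrt{t/k}$ after summing the geometric tail and bounding it by the stationary-point error. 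The delicate point is ensuring the starting point $U_0$ and all iterates stay in the region of guaranteed curvature; if $U_0$ is far, one first uses part (a) to get close in $O(1)$ steps and then switches to the linear rate, or argues directly that the contraction toward $\wh U_n^\star$ is global because $\psi$ is globally monotone and the step size $1/L_F$ never overshoots.
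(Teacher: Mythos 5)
Your part (a) is exactly the paper's argument: invoke the classical $O(1/j)$ bound for gradient descent with step size $1/L$ on a convex, $L$-smooth function, with $L_F=1$ from Lemma~\ref{lemma:optimization}. Nothing to add there.

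Part (b) is where you diverge, and the route you pick has a real gap. You want to show that the gradient map $G(U)=U-\nabla(\tr F_\theta)(U)$ is a $\tfrac34$-contraction in operator norm toward its fixed point $\wh U_n^\star$, which would require a quantitative strong-convexity bound on $\tr F_\theta$. That bound is not established anywhere in the paper, and it is not easy: the relevant object is the Fr\'echet derivative $D\psi(\theta(H_{i_1\ldots i_m}-U))$ of the \emph{matrix} function $\psi$, averaged over tuples, and lower-bounding it as a quadratic form on $\mb H^d$ needs genuinely new work (operator means, control of the ``active'' region, etc.), not just the scalar estimate $\psi'(x)=1-|x|\ge 3/4$ on $|x|\le 1/4$. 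Worse, your fallback — ``the contraction toward $\wh U_n^\star$ is global because $\psi$ is globally monotone and the step size never overshoots'' — is simply false: $\psi'(x)=0$ for $|x|>1$, so $\tr F_\theta$ has no global curvature, $G$ is only nonexpansive (Lipschitz constant $1$, not $<1$), and for $U$ far from all $H_{i_1\ldots i_m}$ the map $G$ is essentially $U\mapsto U+\mathrm{const}$. The other fallback, ``use part (a) to get close first,'' also does not close the loop without strong convexity, since part (a) bounds function values, not distances.

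The paper's actual argument sidesteps strong convexity entirely. In the proof of Theorem~\ref{thm:new-performance}, the iteration error is decomposed not relative to $\wh U_n^\star$ but by adding and subtracting $W_{i_1\ldots i_n}(\mb EH;\theta_\sigma)$, which splits $\| U_n^{(j)}-\mb EH\|$ into the supremum term $L_n(\|U_n^{(j-1)}-\mb EH\|)$ (controlled by Lemma~\ref{lemma:sup-bound}) plus a term at $\mb EH$ (controlled by Lemma~\ref{lemma:EH}). This yields the scalar recursion $\delta_j\le\bigl(\mathrm{r}_H\tfrac{26t}{k}+\tfrac12\bigr)\delta_{j-1}+5.75\,\sigma\sqrt{t/k}\le\tfrac34\delta_{j-1}+5.75\,\sigma\sqrt{t/k}$, valid as long as the iterates stay in the ball of radius $\tfrac{1}{2\theta_\sigma}$ around $\mb EH$. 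The factor $\tfrac34$ comes from the bound on $L_n$, not from any contraction of $G$. For Lemma~\ref{lemma:grad-descent}(b) you just rerun this recursion with $\delta_0=\|U_0-\mb EH\|$ instead of $\delta_0=0$, and unrolling it (equation~\eqref{eq:b30}) gives $\|U_n^{(j)}-\mb EH\|\le(3/4)^j\|U_0-\mb EH\|+23\sigma\sqrt{t/k}$. So you are not wrong that a self-improving recursion is the mechanism, but the recursion is taken around $\mb EH$ via $L_n$, not around $\wh U_n^\star$ via contractivity of $G$; the latter is strictly harder and is not proved in the paper.
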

The proof is given is Section \ref{proof:grad-descent}. 
Note that part (b) implies that a small number of iterations suffice to get an estimator of $\mb EH$ that achieves performance bound similar to $\wh U_n^\star$.


\section{Estimation of covariance matrices}
\label{sec:covariance}

In this section, we consider applications of the previously discussed results to covariance estimation problems. 
Let $Y\in \mb R^d$ be a random vector with mean $\mb EY=\mu$, covariance matrix $\Sigma=\mb E\l[ (Y-\mu)(Y-\mu)^T \r]$, and such that $\mb E \|Y-\mu\|_2^4<\infty$.  
Assume that $Y_1,\ldots, Y_{n}$ be i.i.d. copies of $Y$. 
Our goal is to estimate $\Sigma$; note that when the observations are the heavy-tailed, mean estimation problem becomes non-trivial, so the assumption $\mu=0$ is not plausible.  

$U$-statistics offer a convenient way to avoid explicit mean estimation. 
Indeed, observe that $\Sigma = \frac{1}{2}\mb E\left[ (Y_1 - Y_2)(Y_1 - Y_2)^T\right]$, hence  
the natural estimator of $\Sigma$ is the $U$-statistic
\begin{equation}\label{eq:sample-covariance}
\widetilde \Sigma_n =  \frac{1}{n(n-1)} \sum_{i\ne j}\frac{ (Y_i - Y_j)(Y_i - Y_j)^T}{2}.
\end{equation}
It is easy to check that $\widetilde \Sigma$ coincides with the usual sample covariance estimator 
\[
\widetilde \Sigma_n=\frac{1}{n-1}\sum_{j=1}^n (Y_j - \bar Y_n)(Y_j - \bar Y_n)^T.
\]
The robust version is defined according to \eqref{eq:estimator1} as 
\begin{align}
\label{eq:estimator4}
\wh \Sigma_\star =
\argmin_{S\in \mb R^{d\times d}, S=S^T} \Bigg[ \tr \sum_{i\ne j}
\Psi\left(\theta\left( \frac{ (Y_i - Y_j)(Y_i - Y_j)^T}{2} - S\right)\right) \Bigg],
\end{align}
which, by Lemma \ref{lemma:optimization}, is equivalent to 
\begin{align*}
\sum_{i\ne j}\psi\left( \theta \left( \frac{(Y_i - Y_j)(Y_i - Y_j)^T}{2} - \wh\Sigma_\star \right) \right) = 0_{d\times d}.
\end{align*}

\begin{remark}
\label{remark:comparison}
Assume that $\Sigma_n^{(0)} = 0_{d\times d}$, then the first iteration of the gradient descent for the problem \eqref{eq:estimator4} is
\begin{align*}
\Sigma_n^{(1)} & = 
\frac{1}{\theta}\frac{1}{n(n-1)}\sum_{i\ne j}\psi\Big(\theta \, \frac{(Y_i - Y_j)(Y_i - Y_j)^T}{2} \Big). 
\end{align*}
$\Sigma_n^{(1)}$ can itself be viewed as an estimator of the covariance matrix. 
It has been proposed in \cite{minsker2016sub} (see Remark 7 in that paper), and its performance has been later analyzed in \cite{fan2017farm} (see Theorem 3.2). These results support the claim that a small number of gradient descent steps for problem \eqref{eq:estimator1} suffice in applications.
\end{remark}

\noindent To assess performance of $\wh \Sigma_\star$, we will apply Theorem \ref{thm:new-performance}. 
First, let us discuss the ``matrix variance'' $\sigma^2$ appearing in the statement. 
Direct computation shows that for $H(Y_1,Y_2) = \frac{(Y_1 - Y_2)(Y_1 - Y_2)^T}{2}$, 
\[
\mb E(H_{i_1,\ldots,i_m} - \mb EH)^2 = \frac{1}{2}\l( \mb E\l(  (Y-\mu)(Y-\mu)^T \r)^2 + \tr(\Sigma)\Sigma\r). 
\]
The following result (which is an extension of Lemma 2.3 in \cite{wei2017estimation}) connects $\big\| \mb E(H - \mb EH)^2 \big\|$ with $r(\Sigma)$, the effective rank of the covariance matrix $\Sigma$.
\begin{lemma}
\label{lemma:variance}
\begin{enumerate}
\item[(a)] Assume that kurtosis of the linear forms $\langle Y,v \rangle$ is uniformly bounded by $K$, meaning that 
$\sup\limits_{v:\|v\|_2=1}\frac{\mb E \langle Y - \mb EY, v \rangle^4}{\l[\mb E \langle Y - \mb EY, v \rangle^2\r]^2}\leq K$. Then
\begin{align*}
\big\| \mb E\l(  (Y-\mu)(Y-\mu)^T \r)^2 \big\| &\leq K \,\tr(\Sigma) \, \|\Sigma\|  
\\
\end{align*}
\item[(b)] Assume that the kurtosis of the coordinates $Y^{(j)}:=\dotp{Y}{e_j}$ of $Y$ is uniformly bounded by $K'<\infty$, meaning that 
$\max\limits_{j=1,\ldots,d}\frac{\mb E\l( Y^{(j)} - \mb E Y^{(j)} \r)^4}{\l[\mb E\l( Y^{(j)} - \mb E Y^{(j)} \r)^2\r]^2}\leq K'$. Then
\begin{align*}
&
\tr\l[  \mb E\l(  (Y-\mu)(Y-\mu)^T \r)^2 \r] \leq K' \l( \tr(\Sigma) \r)^2.
\end{align*}
\item[(c)] The following inequality holds:
\[
\big\| \mb E\l(  (Y-\mu)(Y-\mu)^T \r)^2 \big\| \geq \tr(\Sigma) \l\| \Sigma \r\|. 
\]
\end{enumerate}
\end{lemma}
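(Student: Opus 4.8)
The plan is to prove the three parts of Lemma \ref{lemma:variance} by reducing everything to scalar moment computations applied to the quadratic form $W := (Y-\mu)(Y-\mu)^T$. Write $Z := Y - \mu$, so that $W = ZZ^T$ is a rank-one random matrix and $W^2 = \|Z\|_2^2\, ZZ^T$. The key identity, valid for any fixed unit vector $v$, is $\dotp{v}{\mb E[W^2] v} = \mb E\l[ \|Z\|_2^2 \dotp{Z}{v}^2 \r]$, and similarly $\tr\l(\mb E[W^2]\r) = \mb E\l[\|Z\|_2^4\r]$. These two reductions are what the rest of the argument rests on.

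For part (a), I would fix a unit vector $v$ and apply the Cauchy--Schwarz inequality in the form $\mb E\l[\|Z\|_2^2 \dotp{Z}{v}^2\r] \leq \l(\mb E\|Z\|_2^4\r)^{1/2}\l(\mb E\dotp{Z}{v}^4\r)^{1/2}$; then bound $\mb E\dotp{Z}{v}^4 \leq K \l(\mb E\dotp{Z}{v}^2\r)^2 \leq K\|\Sigma\|^2$ using the kurtosis assumption together with $\mb E\dotp{Z}{v}^2 = \dotp{v}{\Sigma v} \leq \|\Sigma\|$. It remains to control $\mb E\|Z\|_2^4 = \tr\l(\mb E[W^2]\r)$, but this seemingly makes the argument circular; the resolution is to instead expand $\mb E\|Z\|_2^4 = \sum_{j,k}\mb E\l[\dotp{Z}{e_j}^2\dotp{Z}{e_k}^2\r] \leq \sum_{j,k}\l(\mb E\dotp{Z}{e_j}^4\r)^{1/2}\l(\mb E\dotp{Z}{e_k}^4\r)^{1/2} \leq K\l(\sum_j \mb E\dotp{Z}{e_j}^2\r)^2 = K\l(\tr\Sigma\r)^2$, using Cauchy--Schwarz coordinate-wise and the kurtosis bound again. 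Actually a cleaner route for (a) avoids $\mb E\|Z\|_2^4$ entirely: write $\mb E\l[\|Z\|_2^2\dotp{Z}{v}^2\r] = \sum_j \mb E\l[\dotp{Z}{e_j}^2 \dotp{Z}{v}^2\r] \leq \sum_j \l(\mb E\dotp{Z}{e_j}^4\r)^{1/2}\l(\mb E\dotp{Z}{v}^4\r)^{1/2} \leq \sqrt{K}\cdot\sqrt{K}\sum_j \mb E\dotp{Z}{e_j}^2 \cdot \mb E\dotp{Z}{v}^2 \leq K\,\tr(\Sigma)\,\|\Sigma\|$, which is exactly the claimed bound; I would use this.

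Part (b) is the most direct: $\tr\l(\mb E[W^2]\r) = \mb E\|Z\|_2^4 = \mb E\l[\l(\sum_j \dotp{Z}{e_j}^2\r)^2\r] = \sum_{j,k}\mb E\l[\dotp{Z}{e_j}^2\dotp{Z}{e_k}^2\r]$, and applying Cauchy--Schwarz to each term followed by the coordinate-kurtosis bound $\mb E\dotp{Z}{e_j}^4 \leq K'\l(\mb E\dotp{Z}{e_j}^2\r)^2$ gives $\sum_{j,k} \sqrt{K'}\mb E\dotp{Z}{e_j}^2 \cdot \sqrt{K'}\mb E\dotp{Z}{e_k}^2 = K'\l(\sum_j \mb E\dotp{Z}{e_j}^2\r)^2 = K'\l(\tr\Sigma\r)^2$. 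For part (c), I would pick $v$ to be a top eigenvector of $\Sigma$, so $\|\Sigma\| = \dotp{v}{\Sigma v} = \mb E\dotp{Z}{v}^2$, and then lower-bound $\big\|\mb E[W^2]\big\| \geq \dotp{v}{\mb E[W^2]v} = \mb E\l[\|Z\|_2^2\dotp{Z}{v}^2\r] \geq \mb E\l[\dotp{Z}{v}^2 \cdot \dotp{Z}{v}^2\r]$? That is not quite $\tr(\Sigma)\|\Sigma\|$. Instead, the right move is Cauchy--Schwarz in the ``reverse'' direction is unavailable, so I would instead note $\mb E\l[\|Z\|_2^2 \dotp{Z}{v}^2\r] \geq \mb E\l[\dotp{Z}{w}^2 \dotp{Z}{v}^2\r]$ for any unit $w$; summing a clever choice does not obviously work either, so the cleanest argument is: by the Cauchy--Schwarz inequality applied to the random variables $\|Z\|_2 \dotp{Z}{v}$ and $\|Z\|_2$ (or rather, using $\mb E[AB] $ with $A = \|Z\|_2^2\dotp{Z}{v}^2$, $B=1$ trivially), actually one uses $\mb E\l[\|Z\|_2^2 \dotp{Z}{v}^2\r] \cdot \mb E[1] \geq \l(\mb E\l[\|Z\|_2 |\dotp{Z}{v}|\r]\r)^2$ — not helpful. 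The correct elementary bound is $\|Z\|_2^2 = \sum_j \dotp{Z}{e_j}^2 \geq \dotp{Z}{v}^2$ plus using $\|Z\|_2^2 \geq \dotp{Z}{u}^2$ for the eigenvector $u$ realizing $\tr$; the real identity is $\mb E\l[\|Z\|_2^2\dotp{Z}{v}^2\r] = \sum_j \mb E\l[\dotp{Z}{e_j}^2\dotp{Z}{v}^2\r] \geq \sum_j \l(\mb E\l[\dotp{Z}{e_j}\dotp{Z}{v}\cdot\text{?}\r]\r)$ — I will instead simply use $\mb E\l[\|Z\|_2^2 \dotp{Z}{v}^2\r] \geq \l(\mb E\|Z\|_2^2\r)\l(\text{min over...}\r)$, which fails for dependence; the genuinely correct route (matching \cite{wei2017estimation}) is: for $v$ a top eigenvector, $\mb E\l[\|Z\|_2^2\dotp{Z}{v}^2\r] \geq \mb E\l[\dotp{Z}{v}^2\r]\cdot\mb E\l[\|Z\|_2^2\r]$ would need a correlation inequality, but in fact one shows it \emph{directly} via $\mb E[\|Z\|_2^2 \dotp{Z}{v}^2] = \sum_j \mb E[\dotp{Z}{e_j}^2\dotp{Z}{v}^2] \geq \sum_j (\mb E[\dotp{Z}{e_j}\dotp{Z}{v}])^2 \cdot 0$ — the clean statement is just Cauchy--Schwarz: $\mb E[\dotp{Z}{e_j}^2\dotp{Z}{v}^2] \geq \l(\mb E[\dotp{Z}{e_j}\dotp{Z}{v}]\r)^2 = \Sigma_{jv}^2$ where $\Sigma_{jv} = \dotp{e_j}{\Sigma v} = \lambda_{\max}v_j$, so $\sum_j \Sigma_{jv}^2 = \lambda_{\max}^2 \|v\|^2 = \|\Sigma\|^2$, giving only $\|\Sigma\|^2$, not $\tr(\Sigma)\|\Sigma\|$.

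The main obstacle, then, is part (c): getting the \emph{lower} bound $\tr(\Sigma)\|\Sigma\|$ rather than the weaker $\|\Sigma\|^2$. The fix is to not throw away the $\|Z\|_2^2$ factor prematurely. Write $\mb E[W^2] = \mb E\l[\|Z\|_2^2 ZZ^T\r] \succeq \mb E\l[\dotp{Z}{u}^2 ZZ^T\r]$ is the wrong direction; instead use that $\mb E[W^2] \succeq$ is additive: for \emph{any} orthonormal basis $\{u_i\}$, $\|Z\|_2^2 = \sum_i \dotp{Z}{u_i}^2$, so $\mb E[W^2] = \sum_i \mb E\l[\dotp{Z}{u_i}^2 ZZ^T\r]$, and each summand is PSD, hence $\big\|\mb E[W^2]\big\| \geq \big\| \mb E[\dotp{Z}{u_i}^2 ZZ^T]\big\| \geq \dotp{u_i}{\mb E[\dotp{Z}{u_i}^2 ZZ^T] u_i} = \mb E[\dotp{Z}{u_i}^4] \geq \l(\mb E\dotp{Z}{u_i}^2\r)^2 = \lambda_i^2$ when $u_i$ are eigenvectors — still $\max_i \lambda_i^2 = \|\Sigma\|^2$. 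The actual trick (as in \cite{wei2017estimation}) must be: $\big\|\mb E[W^2]\big\| \geq \frac{1}{\tr\Sigma}\tr\l(\Sigma \mb E[W^2]\r) = \frac{1}{\tr\Sigma}\mb E\l[\|Z\|_2^2 \dotp{Z}{\Sigma Z}\r]$? No — the clean one is: $\big\|\mb E[W^2]\big\| \geq \dotp{v}{\mb E[W^2]v} = \mb E\l[\|Z\|_2^2\dotp{Z}{v}^2\r]$ with $v$ the top eigenvector, and now bound $\mb E\l[\|Z\|_2^2\dotp{Z}{v}^2\r] \geq \mb E\l[\dotp{Z}{v}^2\r]\cdot\mb E\l[\|Z\|_2^2\r]$ \emph{is false in general}, so instead expand $\|Z\|_2^2 = \sum_j \dotp{Z}{e_j}^2$ and apply Cauchy--Schwarz the other way: $\mb E[\dotp{Z}{e_j}^2\dotp{Z}{v}^2] \geq (\mb E[\dotp{Z}{e_j}\dotp{Z}{v}])^2 + (\mb E[\dotp{Z}{e_j}^2] - (\mb E[\ldots]))\cdot$ — I will resolve this in the writeup by using the identity $\mb E[A^2B^2] \geq \mb E[A^2]\,\mb E[B^2]$ when $A,B$ are such that ... it is cleanest to simply cite/adapt the proof of Lemma 2.3 in \cite{wei2017estimation}: take $v$ a unit eigenvector for $\|\Sigma\|$, then
\[
\big\| \mb E[W^2] \big\| \geq \dotp{v}{\mb E[W^2] v} = \mb E\l[ \|Z\|_2^2 \dotp{Z}{v}^2 \r] = \sum_{j=1}^d \mb E\l[ \dotp{Z}{e_j}^2 \dotp{Z}{v}^2 \r] \geq \sum_{j=1}^d \mb E\l[\dotp{Z}{e_j}^2\r]\mb E\l[\dotp{Z}{v}^2\r],
\]
where the last step uses that $\dotp{Z}{e_j}^2$ and $\dotp{Z}{v}^2$ are nonnegative and the FKG-type / Cauchy--Schwarz consequence $\mb E[XY] \geq$ ... — and in fact the correct elementary fact is that for the \emph{specific} choice this holds because we may assume (after rotating) $v = e_1$, whence $\mb E[\dotp{Z}{e_j}^2\dotp{Z}{e_1}^2] \geq \mb E[\dotp{Z}{e_j}^2]\mb E[\dotp{Z}{e_1}^2]$ is \emph{not} automatic, so the honest statement is that $\mb E[\|Z\|_2^2\dotp{Z}{v}^2] \geq \mb E[\dotp{Z}{v}^4] + \sum_{j\neq 1}\mb E[\dotp{Z}{e_j}^2\dotp{Z}{v}^2] \geq \mb E[\dotp{Z}{v}^2]^2 + \sum_{j\ne 1}(\text{Cov terms})$; given the delicacy, in the actual proof I will follow \cite{wei2017estimation} verbatim for (c), which handles this lower bound correctly, and devote the bulk of the writeup to (a) and (b) where the Cauchy--Schwarz-plus-kurtosis arguments above are complete and self-contained.
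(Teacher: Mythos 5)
Your final plan coincides with the paper's proof. For (a), the ``cleaner route'' you ultimately adopt — expand $\mb E\l[\|Z\|_2^2\dotp{Z}{v}^2\r]=\sum_j \mb E\l[\dotp{Z}{e_j}^2\dotp{Z}{v}^2\r]$, apply Cauchy--Schwarz termwise, then use the kurtosis bound on each fourth-moment factor to pick up one $\sqrt{K}$ from $\mb E^{1/2}\dotp{Z}{v}^4$ and one from $\mb E^{1/2}\dotp{Z}{e_j}^4$ — is exactly what the paper does. Your argument for (b) (write $\tr\mb E[W^2]=\mb E\|Z\|_2^4=\sum_{j,k}\mb E\l[\dotp{Z}{e_j}^2\dotp{Z}{e_k}^2\r]$, Cauchy--Schwarz plus coordinate kurtosis) is also identical. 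For (c), after your exploratory detours you correctly conclude that the naive Cauchy--Schwarz route $\mb E\l[\dotp{Z}{e_j}^2\dotp{Z}{v}^2\r]\geq\l(\mb E\dotp{Z}{e_j}\dotp{Z}{v}\r)^2$ only yields $\|\Sigma\|^2$ rather than $\tr(\Sigma)\|\Sigma\|$, and you defer to \cite{wei2017estimation}; the paper's own proof of (c) is precisely a one-line citation of Corollary 5.1 in that reference. So the proposal is correct and matches the paper's approach. One presentational point: be sure the failed intermediate claims in your (c) exploration (e.g.\ $\mb E\l[\dotp{Z}{e_j}^2\dotp{Z}{v}^2\r]\geq\mb E\dotp{Z}{e_j}^2\,\mb E\dotp{Z}{v}^2$, which you rightly flag as not automatic) do not survive into the final writeup, since they are not true in general.
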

\noindent Lemma \ref{lemma:variance} immediately implies that under the bounded kurtosis assumption, 
\[
\big\| \mb E(H - \mb EH)^2\big\| \leq K \,\mathrm{r}(\Sigma) \, \|\Sigma\|^2.
\]
The following corollary of Theorem \ref{thm:new-performance} (together with Remark \ref{remark:rank}) is immediate:
\begin{corollary}
\label{corollary:cov}
Assume that the kurtosis of linear forms $\langle Y,v \rangle, \ v\in \mb R^d,$ is uniformly bounded by $K$. Moreover, let $t>0$ be such that 
\[
r(\Sigma)\frac{t}{\lfloor n/2\rfloor} \leq \frac{1}{104}.
\]
Then for any $\sigma\geq \sqrt{K \,\mathrm{r}(\Sigma)}\, \|\Sigma\|$ and 
$\theta:=\theta_\sigma = \frac{1}{\sigma}\sqrt{\frac{2t}{\lfloor n/2\rfloor}}$,
\[
\Big\| \wh \Sigma_\star - \Sigma \Big\|\leq 23\sigma \sqrt{\frac{t}{\lfloor n/2\rfloor}}
\]
with probability $\geq 1 - (4d+1)e^{-t}$.
\end{corollary}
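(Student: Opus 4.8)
The plan is to derive Corollary \ref{corollary:cov} directly from Theorem \ref{thm:new-performance} applied to the kernel $H(Y_1,Y_2) = \frac{1}{2}(Y_1-Y_2)(Y_1-Y_2)^T$ with $m=2$, so that $k=\lfloor n/m\rfloor = \lfloor n/2\rfloor$. The robust estimator $\wh\Sigma_\star$ in \eqref{eq:estimator4} is exactly $\wh U_n^\star$ for this choice of kernel, and $\mb EH = \Sigma$, so Theorem \ref{thm:new-performance} will give the stated deviation bound $\|\wh\Sigma_\star - \Sigma\| \leq 23\sigma\sqrt{t/\lfloor n/2\rfloor}$ with probability $\geq 1-(4d+1)e^{-t}$, provided that (i) $\sigma \geq \|\mb E(H-\mb EH)^2\|^{1/2}$, and (ii) the required condition on the effective rank holds. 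The bulk of the work is just verifying (i) and (ii) under the bounded-kurtosis hypothesis, using Lemma \ref{lemma:variance} and Remark \ref{remark:rank}.

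First I would record the identity, stated in the text preceding Lemma \ref{lemma:variance}, that $\mb E(H-\mb EH)^2 = \frac12\big(\mb E((Y-\mu)(Y-\mu)^T)^2 + \tr(\Sigma)\Sigma\big)$. Taking operator norms and using the triangle inequality together with Lemma \ref{lemma:variance}(a), which under the bounded-kurtosis assumption gives $\|\mb E((Y-\mu)(Y-\mu)^T)^2\| \leq K\,\tr(\Sigma)\|\Sigma\|$, yields
\[
\big\|\mb E(H-\mb EH)^2\big\| \leq \tfrac12\big(K\,\tr(\Sigma)\|\Sigma\| + \tr(\Sigma)\|\Sigma\|\big) \leq K\,\tr(\Sigma)\|\Sigma\| = K\,\mathrm{r}(\Sigma)\,\|\Sigma\|^2,
\]
since $K\geq 1$ always. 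Hence any $\sigma \geq \sqrt{K\,\mathrm{r}(\Sigma)}\,\|\Sigma\|$ satisfies requirement (i), which is precisely the range of $\sigma$ in the Corollary, and the corresponding scaling parameter is $\theta_\sigma = \frac{1}{\sigma}\sqrt{2t/\lfloor n/2\rfloor}$ as stated.

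Next I would check the effective-rank condition. Rather than invoking the condition $\mathrm{r}_H\frac{t}{k}\leq\frac{1}{104}$ of Theorem \ref{thm:new-performance} literally (which would involve $\mathrm{r}_H = \tr(\mb E(H-\mb EH)^2)/\|\mb E(H-\mb EH)^2\|$), I would use the weakened version in Remark \ref{remark:rank}: it suffices that $\tr(\mb E(H-\mb EH)^2)/\sigma^2 \cdot t/k \leq \frac{1}{104}$ for some $\sigma^2 \geq \|\mb E(H-\mb EH)^2\|$. Taking the trace in the identity above and using Lemma \ref{lemma:variance}(b)–(c) (or simply $\tr(\mb E((Y-\mu)(Y-\mu)^T)^2) \geq \tr(\Sigma)\|\Sigma\|$ isn't needed — what's needed is the upper bound), one bounds $\tr(\mb E(H-\mb EH)^2) \leq \tfrac12(K'(\tr\Sigma)^2 + (\tr\Sigma)^2)$; but in fact the cleaner route, matching the Corollary's hypothesis which only mentions $r(\Sigma)$, is to take $\sigma^2 = K\,\tr(\Sigma)\|\Sigma\|$ and bound $\tr(\mb E(H-\mb EH)^2)$ crudely by $d$ times its operator norm, or more carefully note that for the kernel at hand a direct computation gives $\tr(\mb E(H-\mb EH)^2) \leq K\,\tr(\Sigma)\tr(\Sigma)$-type bounds; combined these yield that $r(\Sigma)\,t/\lfloor n/2\rfloor \leq \frac{1}{104}$ implies the hypothesis of Remark \ref{remark:rank} for the chosen $\sigma^2$. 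The minor obstacle is getting the constant $\frac{1}{104}$ to propagate exactly rather than with extra factors; I expect this is absorbed because the relevant ratio $\tr(\mb E(H-\mb EH)^2)/\sigma^2$ is dominated by $\mathrm{r}(\Sigma)$ once $\sigma^2$ is taken as the effective-rank-scaled variance proxy, so no deterioration of the constant occurs. With both (i) and (ii) verified, Theorem \ref{thm:new-performance} applies verbatim and the Corollary follows.
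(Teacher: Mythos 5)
Your overall plan is exactly the paper's: apply Theorem \ref{thm:new-performance} together with Remark \ref{remark:rank} to the kernel $H(Y_1,Y_2)=\tfrac12(Y_1-Y_2)(Y_1-Y_2)^T$ with $m=2$, $k=\lfloor n/2\rfloor$, $\mb EH=\Sigma$, and use Lemma \ref{lemma:variance} to translate the kurtosis bound into control of $\mb E(H-\mb EH)^2$. The verification of condition (i) (that $\sigma^2\geq K\,\mathrm{r}(\Sigma)\|\Sigma\|^2\geq\|\mb E(H-\mb EH)^2\|$, using part (a) of Lemma \ref{lemma:variance} and $K\geq1$) is correct.

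Where the proposal has a genuine gap is in the verification of the effective-rank condition of Remark \ref{remark:rank}, which is the only non-trivial calculation in the corollary. You offer two alternatives and commit to neither. The crude bound ``$\tr(\mb E(H-\mb EH)^2)\leq d\cdot\|\mb E(H-\mb EH)^2\|$'' is a dead end: it would give $\tr(\mb E(H-\mb EH)^2)/\sigma^2\leq d\cdot\mathrm{r}(\Sigma)$, a factor of $d$ worse than what is needed, so the constant $1/104$ would not survive. The correct route is the one you gesture at with ``$K\,\tr(\Sigma)\tr(\Sigma)$-type bounds'' but do not establish. To nail it down, one needs the observation — not explicitly made in the proposal — that the coordinate kurtosis $K'$ of Lemma \ref{lemma:variance}(b) is automatically bounded by the linear-form kurtosis $K$ appearing in the corollary's hypothesis, since the coordinates $Y^{(j)}=\dotp{Y}{e_j}$ are themselves linear forms with $\|e_j\|_2=1$. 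With that, Lemma \ref{lemma:variance}(b) gives $\tr\bigl[\mb E((Y-\mu)(Y-\mu)^T)^2\bigr]\leq K'(\tr\Sigma)^2\leq K(\tr\Sigma)^2$, so
\[
\tr\bigl(\mb E(H-\mb EH)^2\bigr)=\tfrac12\Bigl(\tr\bigl[\mb E((Y-\mu)(Y-\mu)^T)^2\bigr]+(\tr\Sigma)^2\Bigr)\leq\tfrac{K+1}{2}(\tr\Sigma)^2\leq K(\tr\Sigma)^2,
\]
and therefore for any $\sigma^2\geq K\,\mathrm{r}(\Sigma)\|\Sigma\|^2=K\tr(\Sigma)\|\Sigma\|$,
\[
\frac{\tr\bigl(\mb E(H-\mb EH)^2\bigr)}{\sigma^2}\leq\frac{K(\tr\Sigma)^2}{K\tr(\Sigma)\|\Sigma\|}=\mathrm{r}(\Sigma).
\]
Thus the hypothesis $\mathrm{r}(\Sigma)\,t/\lfloor n/2\rfloor\leq1/104$ implies the condition in Remark \ref{remark:rank} with no loss in the constant, and the corollary follows. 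Your ``I expect this is absorbed'' is correct, but the proposal as written never proves it and explicitly floats an alternative that would fail.
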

\noindent An adaptive version of the estimator $\widetilde \Sigma_\star$ can be constructed as in \eqref{eq:lepski}, and its performance follows similarly from Theorem \ref{th:lepski}.
\begin{remark}
It is known \cite{koltchinskii2017concentration} that the quantity $\sqrt{\mathrm{r}(\Sigma)} \|\Sigma\|$ controls the expected error of the sample covariance estimator in the Gaussian setting. 
On the other hand, fluctuations of the error around its expected value in the Gaussian case \cite{koltchinskii2017concentration} are controlled by the ``weak variance'' $\sup_{v\in\mb R^d:\|v\|_2=1}\mb E^{1/2}\dotp{Z}{v}^4\leq \sqrt{K} \|\Sigma\|$, while in our bounds fluctuations are controlled by the larger quantity $\sigma^2$; this fact leaves room for improvement in our results.
\end{remark}

\subsection{Estimation in Frobenius norm}
\label{sec:frob}

Next, we show that thresholding the singular values of the adaptive estimator $\widetilde \Sigma_\star$ (defined as in \eqref{eq:lepski} for some $\gamma>1$) yields the estimator that achieves optimal performance in Frobenius norm. 
Given $\tau>0$, define
\begin{align}
\label{eq:thresholding}
&
\widetilde \Sigma_\star^\tau = \sum_{j=1}^d \max\l(\lambda_j\l(\widetilde \Sigma_\star\r) -\tau/2, 0\r) v_j(\widetilde \Sigma_\star) v_j(\widetilde \Sigma_\star)^T,
\end{align}
where $\lambda_j(\widetilde \Sigma_\star)$ and $v_j(\widetilde \Sigma_\star)$ are the eigenvalues and the corresponding eigenvectors of $\widetilde \Sigma_\star$. 
\begin{corollary}
\label{cor:frob}
Assume that the kurtosis of linear forms $\langle Y,v \rangle, \ v\in \mb R^d,$ is uniformly bounded by $K$. Moreover, let $t>0$ be such that 
\[
\mathrm{r}(\Sigma)\frac{t+\Xi}{\lfloor n/2\rfloor} \leq \frac{1}{104},
\]
where $\Xi$ was defined in \eqref{eq:xi} with $\sigma_\ast:=\sqrt{K\, \mathrm{r}(\Sigma)} \|\Sigma\|$. Then for any 
\[
\tau \geq \gamma\cdot 138\sqrt{K}\, \|\Sigma\| \,\sqrt{\frac{\mathrm{r}(\Sigma)(t+\Xi)}{\lfloor n/2\rfloor}},
\]
\begin{align}
&
\label{eq:ex70}
\l\| \widetilde \Sigma_\star^\tau - \Sigma \r\|_{\mathrm{F}}^2\leq 
\inf_{S\in \mb R^{d\times d},S=S^T} \l[  \l\| S - \Sigma \r\|_{\mathrm{F}}^2 + \frac{(1+\sqrt{2})^2}{8}\tau^2\mathrm{rank}(S)  \r].
\end{align}
with probability $\geq 1-(4d+1)e^{-t}$.
\end{corollary}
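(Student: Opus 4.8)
The plan is to combine the operator-norm guarantee for the adaptive estimator $\widetilde\Sigma_\star$ (Theorem~\ref{th:lepski}, applied to the covariance kernel as in Corollary~\ref{corollary:cov}) with a deterministic oracle inequality for singular-value thresholding. First I would invoke Theorem~\ref{th:lepski}: under the stated condition $\mathrm{r}(\Sigma)\frac{t+\Xi}{\lfloor n/2\rfloor}\leq\frac{1}{104}$, with $\sigma_\ast=\sqrt{K\,\mathrm{r}(\Sigma)}\|\Sigma\|$ (legitimate by Lemma~\ref{lemma:variance}(a) together with the identity for $\mb E(H-\mb EH)^2$ in terms of $\Sigma$), we get that on an event $\m G$ of probability $\geq 1-(4d+1)e^{-t}$,
\[
\l\| \widetilde\Sigma_\star - \Sigma \r\| \leq 69\gamma\,\sigma_\ast\sqrt{\frac{t+\Xi}{\lfloor n/2\rfloor}} =: \delta,
\]
and note that the hypothesis on $\tau$ exactly says $\tau \geq 2\delta$. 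The rest of the argument is then purely deterministic and works on the event $\m G$.

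The deterministic core is a standard oracle inequality for soft/hard hybrid thresholding of the eigenvalues (here $\lambda_j \mapsto \max(\lambda_j - \tau/2, 0)$ applied to a self-adjoint matrix), of the type found in low-rank matrix estimation (e.g.\ Koltchinskii–Lounici–Tsybakov). Writing $A := \widetilde\Sigma_\star$ so that $\|A - \Sigma\|\leq \tau/2$, and letting $\widetilde\Sigma_\star^\tau = \Psi_{\tau/2}(A)$ denote the thresholded matrix, the key fact is: for any symmetric $S$,
\[
\l\| \Psi_{\tau/2}(A) - \Sigma \r\|_{\mathrm F}^2 \leq \l\| S - \Sigma \r\|_{\mathrm F}^2 + \frac{(1+\sqrt 2)^2}{8}\,\tau^2\,\mathrm{rank}(S),
\]
whenever $\|A-\Sigma\|\leq \tau/2$. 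I would prove this by the usual route: first expand $\|\Psi_{\tau/2}(A)-\Sigma\|_{\mathrm F}^2 = \|\Psi_{\tau/2}(A) - A\|_{\mathrm F}^2 + \|A - \Sigma\|_{\mathrm F}^2 + 2\langle \Psi_{\tau/2}(A)-A,\, A-\Sigma\rangle$ is not quite right because the cross term need not vanish; instead use the variational characterization of $\Psi_{\tau/2}(A)$ as the minimizer of $B\mapsto \frac12\|B-A\|_{\mathrm F}^2 + \frac{\tau}{2}\|B\|_1$ over symmetric $B$, which gives $\frac12\|\Psi_{\tau/2}(A)-A\|_{\mathrm F}^2 + \frac{\tau}{2}\|\Psi_{\tau/2}(A)\|_1 \leq \frac12\|S-A\|_{\mathrm F}^2 + \frac{\tau}{2}\|S\|_1$ for every competitor $S$. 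Then I rearrange, insert $\pm\Sigma$, and control the resulting terms using $\|A-\Sigma\|\leq\tau/2$ together with the trace duality inequalities $|\langle M, N\rangle| \leq \|M\|\,\|N\|_1$ and $\|M\|_1 \leq \sqrt{\mathrm{rank}(M)}\,\|M\|_{\mathrm F}$ applied to the rank-$\mathrm{rank}(S)$ pieces. A short completion-of-the-square / AM–GM step with the optimal constant produces the factor $\frac{(1+\sqrt2)^2}{8}$; matching this exact constant is the only delicate bookkeeping.

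Putting the two pieces together: on $\m G$ the bound $\|A-\Sigma\|\leq\delta\leq\tau/2$ holds, so the deterministic oracle inequality applies and yields exactly \eqref{eq:ex70}, with probability $\geq 1-(4d+1)e^{-t}$. The main obstacle I anticipate is not conceptual but bookkeeping: getting the sharp constant $\frac{(1+\sqrt2)^2}{8}$ out of the variational inequality requires choosing the AM–GM weights correctly and being careful about the factor-of-two conventions relating the threshold $\tau/2$ in \eqref{eq:thresholding} to the penalty level $\tau$ in the associated nuclear-norm-penalized problem. A secondary point to verify is that $\Psi_{\tau/2}$ applied eigenvalue-wise to a symmetric (not necessarily positive) matrix is indeed the proximal operator of $\frac{\tau}{2}\|\cdot\|_1$ restricted to the symmetric cone — this is true because the nuclear norm and Frobenius norm are both unitarily invariant, so the problem diagonalizes and reduces to the scalar soft-thresholding identity.
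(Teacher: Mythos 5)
Your proposal follows essentially the same route as the paper: the paper also rewrites $\widetilde\Sigma_\star^\tau$ via the nuclear-norm-penalized variational characterization, isolates a deterministic lemma stating that \eqref{eq:ex70} holds on the event $\{\tau\geq 2\|\widetilde\Sigma_\star-\Sigma\|\}$ (citing the proof of Theorem~1 in \cite{lounici2014high} for that step rather than reproving it), and then lower-bounds the probability of this event by combining Theorem~\ref{th:lepski} with Lemma~\ref{lemma:variance}. Your sketch simply fills in the mechanics of the Lounici argument that the paper leaves as a reference, and your reduction $\tau\geq 2\delta$ with $\delta=69\gamma\sigma_\ast\sqrt{(t+\Xi)/\lfloor n/2\rfloor}$ matches the paper's numerology exactly.
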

\noindent The proof of this corollary is given in Section \ref{proof:frob}. 

\subsection{Masked covariance estimation}
\label{sec:masked}

Masked covariance estimation framework is based on the assumption that some entries of the covariance matrix $\Sigma$ are ``more important.'' 
This is quantified by a symmetric mask matrix $M\in \mb R^{d\times d}$, whence the goal is to estimate the matrix 
$M\odot \Sigma$ that ``downweights'' the entries of $\Sigma$ that are deemed less important, or incorporates the prior information on $\Sigma$.  
This problem formulation has been introduced in \cite{levina2012partial}, and later studied in a number of papers including \cite{chen2012masked} and \cite{kabanava2017masked}. 

We will be interested in finding an estimator $\wh \Sigma_\star^M$ such that 
$\| \wh \Sigma_\ast^M - M\odot\Sigma\|$ is small with high probability, and specifically in dependence of the estimation error on the mask matrix $M$. 
Consider the following estimator:
\begin{align}
\label{eq:masked}
\wh \Sigma_\star^M =
\argmin_{S\in \mb R^{d\times d}, S=S^T} \Bigg[ \tr \sum_{i\ne j}
\Psi\left(\theta\left( \frac{ M \odot (Y_i - Y_j)(Y_i - Y_j)^T}{2} - S\right)\right) \Bigg],
\end{align}
which is the ``robust'' version of the estimator $M\odot \widetilde \Sigma_n$, where $\widetilde \Sigma_n$ is the sample covariance matrix defined in \eqref{eq:sample-covariance}. 
Next, following \cite{chen2012masked} we introduce additional parameters that appear in the performance bounds for 
$\wh \Sigma_\star^M$. 
Let 
\[
\|M\|_{1\rightarrow 2}:=\max_{j=1,\ldots,d} \sqrt{\sum_{i=1}^d M_{ij}^2}
\]
be the maximum $\|\cdot\|_2$ norm of the columns of $M$. We also define
\[
\nu_4(Y):=\sup_{\|\mathbf{v}\|_2\leq1}\mb E^{1/4}\langle\mathbf{v},Y -\mb E Y\ \rangle^4
\]
and 
\[
\mu_4(Y)= \max_{j=1\ldots d} \mb E^{1/4}( Y^{(j)} - \mb EY^{(j)})^4. 
\]
The following result describes the finite-sample performance guarantees for $\wh \Sigma_\star^M$. 
\begin{corollary}
\label{cor:masked}
Assume that the kurtosis of the coordinates $Y^{(j)}=\dotp{Y}{e_j}$ of $Y$ is uniformly bounded by $K'$.
Moreover, let $t>0$ be such that 
\[
\sqrt{K'} \frac{\tr(\Sigma)}{\nu_4^2(Y)} \frac{ t}{\lfloor n/2 \rfloor}\leq \frac{1}{104}.
\]
Then for any $\Delta\geq \sqrt{2} \|M\|_{1\rightarrow 2} \,\nu_4(Y) \, \mu_4(Y)$ and 
$\theta=\frac{1}{\Delta}\sqrt{\frac{2t}{\lfloor n/2 \rfloor}}$, 
\[
\Big\| \wh \Sigma_\star^M - M\odot \Sigma \Big\|\leq 23\Delta\sqrt{\frac{t}{\lfloor n/2 \rfloor}}
\]
with probability $\geq 1-(4d+1)e^{-t}$.
\end{corollary}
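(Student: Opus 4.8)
The plan is to apply Corollary \ref{cor:rectangular} — not Theorem \ref{thm:new-performance} directly — because the masked kernel $H^M(Y_1,Y_2) := \frac{M\odot(Y_1-Y_2)(Y_1-Y_2)^T}{2}$ need not be positive semidefinite (Hadamard product with $M$ destroys the PSD structure of $(Y_1-Y_2)(Y_1-Y_2)^T$), so it should be treated as a general real (square but non-symmetric-in-spirit) matrix-valued kernel. Actually, $H^M$ is symmetric if $M$ is symmetric, so one could in principle use Theorem \ref{thm:new-performance}; but the key point is that $\mb E(H^M-\mb EH^M)^2$ has no clean closed form, whereas $\mb E(H^M-\mb EH^M)(H^M-\mb EH^M)^\ast$ does, so Corollary \ref{cor:rectangular} with $d_1=d_2=d$ is the natural vehicle. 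The whole argument then reduces to two tasks: (i) bound $\sigma_\star^2 = \|\mb E(H^M-\mb EH^M)(H^M-\mb EH^M)^\ast\|$, and (ii) bound the "trace" quantity $\tr[\mb E(H^M-\mb EH^M)(H^M-\mb EH^M)^\ast]$, which controls the effective-rank condition $\tilde{\mathrm r}_H \frac{t}{k}\le\frac{1}{104}$.

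For step (i), I would first symmetrize: since $\frac{1}{2}(Y_1-Y_2)(Y_1-Y_2)^T$ has the same distribution considerations as $ZZ^T$ with $Z\sim(Y-\mu)/\sqrt{?}$ up to the standard U-statistic bookkeeping, write $H^M - \mb EH^M = M\odot\big(\tfrac12(Y_1-Y_2)(Y_1-Y_2)^T - \Sigma\big)$ and bound $\|\mb E\,[M\odot A][M\odot A]^\ast\|$ where $A = \tfrac12(Y_1-Y_2)(Y_1-Y_2)^T - \Sigma$. The trick, following \cite{chen2012masked}, is the observation that for a fixed matrix $A$, $(M\odot A)(M\odot A)^\ast = \sum_{i,j,k} M_{ij}M_{kj} A_{ij}\bar A_{kj} e_ie_k^T$, and one controls this by pulling out $\|M\|_{1\to2}^2$ as the operator-norm cost of the masking and leaving $\mb E$ of a rank-one-per-column bilinear form, which after expectation becomes an entrywise moment that is bounded by $\nu_4(Y)^2\mu_4(Y)^2$ (the $\nu_4$ factor absorbing the "random direction" and $\mu_4$ the coordinatewise fourth moments). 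Concretely I expect $\sigma_\star^2 \le 2\|M\|_{1\to2}^2\,\nu_4(Y)^2\,\mu_4(Y)^2$, which is exactly the form of $\Delta^2$ in the statement (the $\sqrt 2$ being the square root of the leading $2$). The factor-$2$ arithmetic comes from the symmetrization $(Y_1-Y_2)$ versus $(Y-\mu)$.

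For step (ii), the trace quantity is $\tr[\mb E(H^M-\mb EH^M)(H^M-\mb EH^M)^\ast] = \mb E\|M\odot(\tfrac12(Y_1-Y_2)(Y_1-Y_2)^T - \Sigma)\|_{\mathrm F}^2 \le \mb E\|M\odot \tfrac12(Y_1-Y_2)(Y_1-Y_2)^T\|_{\mathrm F}^2$, and since $|M_{ij}|\le 1$ is not assumed one instead bounds via the coordinate kurtosis: $\mb E\|\tfrac12(Y_1-Y_2)(Y_1-Y_2)^T\|_{\mathrm F}^2$-type quantities are $\le K'(\tr\Sigma)^2$ by Lemma \ref{lemma:variance}(b) after the standard reduction from $(Y_1-Y_2)/\sqrt2$ to $(Y-\mu)$. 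This gives $\tilde{\mathrm r}_H = 2\tr[\cdots]/\sigma_\star^2 \lesssim \sqrt{K'}\,\tr(\Sigma)/\nu_4^2(Y)$ (using $\mu_4(Y)^4\le K'\,\|\mathrm{diag}(\Sigma)\|_{\max}^2$ and $\|M\|_{1\to2}\ge$ the diagonal mask contribution; some care with how $\|M\|_{1\to2}$ cancels is needed), matching the hypothesis $\sqrt{K'}\frac{\tr(\Sigma)}{\nu_4^2(Y)}\frac{t}{\lfloor n/2\rfloor}\le\frac1{104}$. Then one sets $k=\lfloor n/2\rfloor$ (since $m=2$), invokes Corollary \ref{cor:rectangular} with $\sigma=\Delta$, and reads off $\|\hat U_{12}^\star - \mb EH^M\| = \|\wh\Sigma_\star^M - M\odot\Sigma\|\le 23\Delta\sqrt{t/\lfloor n/2\rfloor}$ on an event of probability $\ge 1-(4(d+d)+1)e^{-t}$; note $4(d_1+d_2)+1 = 8d+1 \ge 4d+1$, so the stated $1-(4d+1)e^{-t}$ must be read as the looser claim — I would double-check whether the paper intends $d_1=d_2=d$ to be merged so the Hermitian dilation has size $2d$, or whether a direct application of Theorem \ref{thm:new-performance} to the symmetric kernel $H^M$ (giving dimension $d$, hence $4d+1$) is cleaner; the latter is probably the intended route, with the variance proxy $\sigma^2\ge\|\mb E(H^M-\mb EH^M)^2\|$ bounded by the same $\Delta^2$ via $\|\mb E B^2\|\le\|\mb E BB^\ast\|$ for symmetric $B$.

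The main obstacle is step (i): extracting the masked-variance bound $\|\mb E[M\odot A][M\odot A]^\ast\|\le 2\|M\|_{1\to2}^2\nu_4^2\mu_4^2$ with the right constant. The difficulty is that $A$ is a (dependent) quadratic form in $Y_1-Y_2$, so $[M\odot A][M\odot A]^\ast$ is a degree-four polynomial in the data; one must carefully pair up the $\nu_4$ (controlling $\mb E\langle v,Y-\mu\rangle^4$ for the "output direction" $v$) against the $\mu_4$ (controlling the coordinatewise fourth moments that appear after masking forces us to work entrywise), and keep the $\|M\|_{1\to2}$ factor from turning into the much larger $\|M\|_{\mathrm F}$ or $\|M\|$. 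This is exactly the technical heart of the masked-covariance analysis in \cite{chen2012masked}, adapted here to the $U$-statistic / symmetrized setting, and is where all the real work lies; the rest is bookkeeping and a direct appeal to the already-established Theorem \ref{thm:new-performance} (or Corollary \ref{cor:rectangular}).
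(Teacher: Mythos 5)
Your proposal is essentially on the paper's track — bound $\sigma^2$ via the masked second-moment bound of \cite{chen2012masked}, bound the trace, invoke Theorem \ref{thm:new-performance} together with Remark \ref{remark:rank} — but there are two points that are off.

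First, the detour through Corollary \ref{cor:rectangular} is based on a false premise. Since $M$ is symmetric, $H^M := \tfrac12 M\odot(Y_1-Y_2)(Y_1-Y_2)^T$ is a real symmetric matrix, so $B := H^M - \mb E H^M$ satisfies $B^\ast = B$ and therefore $\mb E B^2 = \mb E BB^\ast$ \emph{exactly} — there is no distinction in closed form. Your later suggested inequality $\|\mb E B^2\|\le\|\mb E BB^\ast\|$ is an equality. The paper simply applies Theorem \ref{thm:new-performance} to the symmetric kernel $H^M$; no dilation is needed, and this is how one gets the stated $1-(4d+1)e^{-t}$ rather than $1-(8d+1)e^{-t}$. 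You do eventually land on this as "probably the intended route," which is correct.

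Second, step (ii) — your trace bound — is where the real gap sits, and your sketch does not close it. You propose to bound $\tr[\mb E(H^M-\mb E H^M)^2]$ by dropping the mask and appealing to Lemma \ref{lemma:variance}(b), then recovering the $\|M\|_{1\to2}^2$ factor via side arguments ("some care with how $\|M\|_{1\to2}$ cancels is needed," "$\|M\|_{1\to2}\ge$ the diagonal mask contribution"). This does not work: once you drop $M$ you have lost the structure, and Lemma \ref{lemma:variance}(b) has no mask in it. The paper instead keeps $M$ and computes the trace \emph{directly, entrywise}: using the identity $\mb E(H^M)^2 = \sum_j M^{(j)}(M^{(j)})^T\odot\mb E[(\tfrac{Y_1^{(j)}-Y_2^{(j)}}{\sqrt2})^2\tfrac{(Y_1-Y_2)(Y_1-Y_2)^T}{2}]$ (equation (4.1) in \cite{chen2012masked}), it gets
\[
\tr\Big[\mb E\Big(\tfrac{(Y_1-Y_2)(Y_1-Y_2)^T}{2}\odot M\Big)^2\Big]=\sum_{i,j}M_{ij}^2\,\mb E\Big[\Big(\tfrac{Y_1^{(i)}-Y_2^{(i)}}{\sqrt2}\Big)^2\Big(\tfrac{Y_1^{(j)}-Y_2^{(j)}}{\sqrt2}\Big)^2\Big],
\]
then applies H\"older and the symmetrization inequality $\mb E(X-X')^4\le 8\mb E(X-\mb E X)^4$ to bound this by $2\sum_{i,j}M_{ij}^2\mb E^{1/2}(Y^{(i)}-\mb E Y^{(i)})^4\,\mb E^{1/2}(Y^{(j)}-\mb E Y^{(j)})^4\le 2\sqrt{K'}\,\mu_4^2(Y)\,\|M\|_{1\to2}^2\,\tr(\Sigma)$. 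Dividing by $\Delta^2\ge 2\|M\|_{1\to2}^2\mu_4^2(Y)\nu_4^2(Y)$ then makes $\|M\|_{1\to2}^2\mu_4^2$ cancel \emph{algebraically} and gives exactly the ratio $\sqrt{K'}\tr(\Sigma)/\nu_4^2(Y)$ in the hypothesis. Your sketch correctly anticipates that such a cancellation must happen, but it does not exhibit it, and the route you propose through Lemma \ref{lemma:variance}(b) cannot produce the needed $\|M\|_{1\to2}^2$ term. The operator-norm bound (step (i)) you leave as a black box is precisely Lemma 4.1 of \cite{chen2012masked} after the same symmetrization, which the paper also cites verbatim.
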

\begin{proof}
Let $X$ and $X'$ be independent and identically distributed random variables. 
Then it is easy to check that 
\begin{align}
\label{eq:c30}
&
\mb E(X-X')^4\leq 8 \mb E(X-\mb EX)^4.
\end{align}
It implies that $\nu^2_4(Y_1 - Y_2)\leq 2\sqrt{2} \nu_4^2(Y)$ and 
$\mu_4(Y_1-Y_2)\leq 2\sqrt{2} \mu_4(Y)$.  

\noindent Next, Lemma 4.1 in \cite{chen2012masked} yields that 
\begin{align}
\label{eq:c11}
&
\Bigg\| \mb E \l( \frac{(Y_1 - Y_2)(Y_1 - Y_2)^T}{2}\odot M\r)^2 \Bigg\| \leq 
2  \|M\|^2_{1\rightarrow 2} \, \mu^2_4(Y) \, \nu_4^2(Y). 
\end{align}
Next, we will find an upper bound for the trace of $\mb E \l( \frac{(Y_1 - Y_2)(Y_1 - Y_2)^T}{2}\odot M\r)^2$. 
It is easy to see that (e.g., see equation (4.1) in \cite{chen2012masked})  
\[
\mb E \l( \frac{(Y_1 - Y_2)(Y_1 - Y_2)^T}{2}\odot M\r)^2 =
\sum_{j=1}^d M^{(j)} \l(M^{(j)}\r)^T \odot \mb E \l(\frac{Y_1^{(j)} - Y_2^{(j)}}{\sqrt{2}}\r)^2 \frac{(Y_1 - Y_2)(Y_1 - Y_2)^T}{2},
\]
where $M^{(j)}$ denotes the $j$-th column of the matrix $M$. 
It follows from \eqref{eq:c30}, H\"{o}lder's inequality and the bounded kurtosis assumption that 
\begin{align*}
\tr\l[ \mb E \l( \frac{(Y_1 - Y_2)(Y_1 - Y_2)^T}{2}\odot M\r)^2 \r] &=
\sum_{i,j=1}^d M_{i,j}^2 \mb E\l[\l(\frac{Y_1^{(i)} - Y_2^{(i)}}{\sqrt{2}}\r)^2\l(\frac{Y_1^{(j)} - Y_2^{(j)}}{\sqrt{2}}\r)^2 \r] \\
&\leq
2\sum_{i,j=1}^d M_{i,j}^2 \mb E^{1/2} \l(Y^{(i)} - \mb E Y^{(i)}\r)^4 \mb E^{1/2} \l(Y^{(j)} - \mb E Y^{(j)}\r)^4 \\
&\leq 
2\sqrt{K'} \mu_4^2(Y) \|M\|^2_{1\rightarrow 2} \, \tr(\Sigma).
\end{align*}
Next, we deduce that for $\Delta^2 \geq 2  \|M\|^2_{1\rightarrow 2} \, \mu^2_4(Y) \, \nu_4^2(Y)$, 
\[
\frac{\tr\l[ \mb E \l( \frac{(Y_1 - Y_2)(Y_1 - Y_2)^T}{2}\odot M\r)^2 \r]}{\Delta^2}
\leq \sqrt{K'} \frac{\tr(\Sigma)}{\nu_4^2(Y)}.
\]
Result now follows from Theorem \ref{thm:new-performance} and Remark \ref{remark:rank}. 
\end{proof}

\begin{remark}
Let
\[
K:=\sup\limits_{v:\|v\|_2=1}\frac{\mb E \langle Y - \mb EY, v \rangle^4}{\l[\mb E \langle Y - \mb EY, v \rangle^2\r]^2}.
\]
Since $\nu_4^2(Y)\leq \sqrt{K}\| \Sigma \|$ by Lemma \ref{lemma:variance} and 
$\mu_4^2\leq \sqrt{K'} \l\|\Sigma \r\|_{\max}$, we can state a slightly modified version of Corollary \ref{cor:masked}. 
Namely, let $t>0$ be such that  
\[
\sqrt{\frac{K'}{K}} \mathrm{r}(\Sigma) \frac{ t}{\lfloor n/2 \rfloor}\leq \frac{1}{104}.
\]
Then for any $\Delta\geq \sqrt{2K} \|M\|_{1\rightarrow 2} \sqrt{\l\|\Sigma \r\|_{\max} \, \|\Sigma\|}$ and 
$\theta=\frac{1}{\Delta}\sqrt{\frac{2t}{\lfloor n/2 \rfloor}}$, 
\[
\Big\| \wh \Sigma_\star^M - M\odot \Sigma \Big\|\leq 23\Delta\sqrt{\frac{t}{\lfloor n/2 \rfloor}}
\]
with probability $\geq 1-(4d+1)e^{-t}$. 
In particular, if $ \|M\|^2_{1\rightarrow 2} \ll \mathrm{r}(\Sigma)\frac{\|\Sigma\|_{\mbox{\quad }}}{\l\|\Sigma \r\|_{\max}}$, then our bounds show that $M\odot \Sigma$ can be estimated at a faster rate than $\Sigma$ itself. 
This conclusion is consistent with results in \cite{chen2012masked} for Gaussian random vectors (e.g., see Theorem 1.1 in that paper); however, we should note that our bounds were obtained under much weaker assumptions.

\end{remark}

\section{Proofs of the mains results}
\label{section:proofs}

In this section, we present the proofs that were omitted from the main exposition. 

\subsection{Technical tools}

We recall several useful facts from probability theory and matrix analysis that our arguments rely on. 
\begin{fact}
\label{fact:01}
Let $f:\mb R\mapsto \mb R$ be a convex function. 
Then $A\mapsto \tr f(A)$ is convex on the set of self-adjoint matrices. 
In particular, for any self-adjoint matrices $A,B$, 
\[
\tr f\l( \frac{A+B}{2} \r)\leq \frac{1}{2}\tr f(A) + \frac{1}{2}\tr f(B).
\] 
\end{fact}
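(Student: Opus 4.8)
The plan is to reduce the matrix statement to the ordinary scalar convexity of $f$ via the identity $\tr f(A)=\sum_{i}\langle f(A)v_i,v_i\rangle$ valid for \emph{any} orthonormal basis $\{v_i\}_{i=1}^d$, combined with a pointwise Jensen-type bound. First I would record the following elementary inequality: for a self-adjoint $A$ whose spectrum lies in the interval $\mb T$ on which $f$ is defined (so that $f(A)$ makes sense in the sense of Definition \ref{matrix-function}) and for any unit vector $v$,
\[
f\big(\langle Av,v\rangle\big)\ \leq\ \langle f(A)v,v\rangle .
\]
To prove this, take the spectral decomposition $A=\sum_k \lambda_k w_k w_k^\ast$; then $\langle f(A)v,v\rangle=\sum_k f(\lambda_k)\,p_k$ and $\langle Av,v\rangle=\sum_k \lambda_k\,p_k$, where the weights $p_k:=|\langle v,w_k\rangle|^2$ are nonnegative and satisfy $\sum_k p_k=\|v\|_2^2=1$. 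The inequality is then exactly Jensen's inequality applied to the convex function $f$ and the probability weights $(p_k)$.

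Next, given self-adjoint matrices $A,B$ (with spectra in $\mb T$) and $\lambda\in[0,1]$, I would set $C:=\lambda A+(1-\lambda)B$, note that $\lambda_{\min}(C)\geq \lambda\lambda_{\min}(A)+(1-\lambda)\lambda_{\min}(B)$ and similarly for $\lambda_{\max}$, so the spectrum of $C$ also lies in $\mb T$. Let $\{v_i\}_{i=1}^d$ be an orthonormal eigenbasis of $C$, so that $\langle Cv_i,v_i\rangle=\lambda_i(C)$ and $\tr f(C)=\sum_i f(\lambda_i(C))=\sum_i f(\langle Cv_i,v_i\rangle)$. Writing $\langle Cv_i,v_i\rangle=\lambda\langle Av_i,v_i\rangle+(1-\lambda)\langle Bv_i,v_i\rangle$, applying scalar convexity of $f$ in each summand, then the pointwise bound above to each resulting term, and finally $\sum_i\langle f(A)v_i,v_i\rangle=\tr f(A)$ and $\sum_i\langle f(B)v_i,v_i\rangle=\tr f(B)$ (because $\{v_i\}$ is an orthonormal basis), gives
\[
\tr f(C)\ \leq\ \lambda\,\tr f(A)+(1-\lambda)\,\tr f(B),
\]
which is the claimed convexity of $A\mapsto \tr f(A)$; the displayed midpoint inequality in the statement is the special case $\lambda=\tfrac12$.

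There is no genuine obstacle here — the argument is self-contained and requires neither continuity nor differentiability of $f$ — the only point deserving care is the domain bookkeeping: one must keep track that the spectra of $A$, $B$, and every convex combination $C$ remain inside the interval $\mb T$, which is automatic since $\mb T$ is an interval and the extreme eigenvalues of $C$ are sandwiched between the corresponding convex combinations of those of $A$ and $B$. The single conceptual ingredient is the pointwise Jensen bound $f(\langle Av,v\rangle)\le\langle f(A)v,v\rangle$, and its proof above is the heart of the matter.
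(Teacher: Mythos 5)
Your proof is correct. The paper handles this fact by a one-line citation to Peierls' inequality (Theorem~2.9 in Carlen's notes), which states that $\sum_i f(\langle A v_i, v_i\rangle)\le \tr f(A)$ for any orthonormal basis $\{v_i\}$ and convex $f$; your pointwise bound $f(\langle Av,v\rangle)\le\langle f(A)v,v\rangle$ is exactly the single-vector form of this, and your derivation of convexity by picking the eigenbasis of $C=\lambda A+(1-\lambda)B$ and invoking basis-independence of the trace is the standard way one deduces convexity of $\tr f$ from Peierls. So you have not taken a different route — you have unpacked the paper's black-box reference into a self-contained argument, with the correct supporting observations (the Jensen step using $\sum_k|\langle v,w_k\rangle|^2=1$, and the Weyl-type sandwich on $\lambda_{\min}(C)$ and $\lambda_{\max}(C)$ to keep the spectrum in $\mathbb{T}$). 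Both details are accurate; the only thing you might add for completeness is a one-word justification (Weyl's inequality, or the variational characterization of extreme eigenvalues) for the eigenvalue sandwich, but as written the argument is sound.
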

\begin{proof}
This is a consequence of Peierls inequality, see Theorem 2.9 in \cite{carlen2010trace} and the comments following it. 
\end{proof}

\begin{fact}
\label{fact:05}
Let $F:\mb R\mapsto \mb R$ be a continuously differentiable function, and $S\in \mb C^{d\times d}$ be a self-adjoint matrix. Then the gradient of $G(S):=\tr F(S)$ is 
\[
\nabla G(S) = F'(S),
\]  
where $F'$ is the derivative of $F$ and $F'(S): \mb C^{d\times d}\mapsto \mb C^{d\times d}$ is the matrix function in the sense of the definition \ref{matrix-function}. 
\end{fact}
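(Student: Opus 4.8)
The statement to prove is Fact \ref{fact:05}: for a continuously differentiable $F:\mb R\mapsto \mb R$ and self-adjoint $S$, the gradient of $G(S)=\tr F(S)$ is $\nabla G(S)=F'(S)$, interpreted as a matrix function.

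\textbf{Plan of proof.} The plan is to compute the directional (Gâteaux) derivative of $G$ at $S$ in an arbitrary self-adjoint direction $V$, and show it equals $\langle F'(S), V\rangle = \tr\big(F'(S)V\big)$, which identifies $F'(S)$ as the gradient with respect to the Frobenius inner product. First I would reduce to the case where $F$ is a polynomial: since $S$ has a bounded spectrum, by the Weierstrass approximation theorem one can approximate $F$ and $F'$ uniformly on a neighborhood of $\sigma(S)$ by polynomials $p_n$ and $p_n'$, and both $S\mapsto \tr F(S)$ and $S\mapsto F'(S)$ depend continuously on these data; so it suffices to establish the identity for $F(x)=x^k$ and extend by linearity and limits.

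\textbf{The polynomial case.} For $F(x)=x^k$, we have $G(S)=\tr(S^k)$, and a direct expansion gives
\[
\tr\big((S+hV)^k\big) = \tr(S^k) + h\sum_{j=0}^{k-1}\tr\big(S^j V S^{k-1-j}\big) + O(h^2).
\]
By cyclicity of the trace, each term $\tr(S^j V S^{k-1-j}) = \tr(S^{k-1}V)$, so the linear term is $h\,k\,\tr(S^{k-1}V) = h\,\tr\big(F'(S)V\big)$. Hence $\frac{d}{dh}\big|_{h=0}\tr F(S+hV) = \tr(F'(S)V) = \langle F'(S),V\rangle$ for every self-adjoint $V$, which is exactly the assertion that $\nabla G(S)=F'(S)$. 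Extending to polynomials is immediate by linearity.

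\textbf{Passing to the limit.} To handle general $C^1$ functions, I would fix $S$ and choose a compact interval $[a,b]\supset\sigma(S+hV)$ valid for all small $h$ and all $V$ in a bounded set; pick polynomials $p_n\to F$ and $p_n'\to F'$ uniformly on $[a,b]$ (possible since $F\in C^1$ — e.g. approximate $F'$ uniformly by a polynomial $q_n$ and set $p_n(x)=F(a)+\int_a^x q_n$). Then $\tr p_n(S+hV)\to \tr F(S+hV)$ uniformly in $h$ (because $|\tr A|\le d\|A\|$ and matrix functions are continuous in the sup-norm of the scalar function on the spectrum), so one can interchange the limit $n\to\infty$ with the difference quotient in $h$; and $p_n'(S+hV)\to F'(S+hV)$ in operator norm. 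Combining, $\frac{d}{dh}\big|_{h=0}\tr F(S+hV) = \lim_n \tr(p_n'(S)V) = \tr(F'(S)V)$. Since this holds for all self-adjoint $V$, and $G$ is differentiable (the difference quotient converges, uniformly enough to be a genuine derivative), we conclude $\nabla G(S)=F'(S)$.

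\textbf{Main obstacle.} The only real subtlety is the interchange of limits justifying that the Gâteaux derivative computed via polynomial approximation is the actual gradient of $G$; this requires the uniform-in-$h$ convergence $\tr p_n(S+hV)\to\tr F(S+hV)$, which in turn rests on the standard fact that $A\mapsto g(A)$ is norm-continuous in $g$ restricted to a fixed compact set containing the relevant spectra. This is routine but should be stated carefully. Alternatively, one can avoid approximation entirely by using the integral representation of $F(S+hV)-F(S)$ (Daleckii–Krein / divided-difference formula) and reading off the first-order term directly, but the polynomial route is the most elementary given the tools already in the paper.
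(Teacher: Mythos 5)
Your proof is correct. The paper does not actually argue this fact; it just cites Lemma A.1 of \cite{minsker2016sub}, so there is no in-paper proof to compare against. Your Weierstrass/polynomial route is a legitimate self-contained derivation: the computation $\tr\big((S+hV)^k\big) = \tr(S^k) + hk\,\tr(S^{k-1}V) + O(h^2)$ via cyclicity of the trace is exactly right, as is building $p_n$ from a polynomial approximant of $F'$ so that $p_n\to F$ and $p_n'\to F'$ uniformly on a compact interval containing the relevant spectra, which then gives $\|p_n(A)-F(A)\|\to 0$ and $\|p_n'(A)-F'(A)\|\to 0$ uniformly in $A$. The only point worth sharpening is in your ``main obstacle'' paragraph: the hypothesis that actually licenses interchanging $\lim_{n\to\infty}$ with $\frac{d}{dh}$ is the uniform-in-$h$ convergence of the \emph{derivatives} $h\mapsto\tr\big(p_n'(S+hV)V\big)$ to $h\mapsto\tr\big(F'(S+hV)V\big)$, together with pointwise convergence of the primitives at a single $h$; uniform convergence of $\tr p_n(S+hV)$ by itself would not suffice. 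Since you do establish the uniform convergence of derivatives (from the bound $\|p_n'(A)-F'(A)\|\le \sup_{[a,b]}|p_n'-F'|$), the argument is complete; the emphasis is just slightly misplaced.
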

\begin{proof}
See Lemma A.1 in \cite{minsker2016sub}. 
\end{proof}

\begin{fact}
\label{fact:06}
Function $\psi(x)$ defined in \eqref{eq:psi} satisfies
\begin{align}
\label{eq:psi-ineq}
&
-\log(1-x+x^2)\leq \psi(x) \leq \log(1+x+x^2) 
\end{align}
for all $x\in \mb R$. 
Moreover, as a function of $\mb H^d$-valued argument (see definition \ref{matrix-function}), $\psi(\cdot)$ is Lipschitz continuous in the Frobenius and operator norms with Lipschitz constant $1$, meaning that for all $A_1,A_2\in \mb H^d$,
\begin{align*}
&
\l\| \psi(A_1) - \psi(A_2) \r\|_{\mathrm{F}} \leq \l\| A_1 - A_2 \r\|_{\mathrm{F}},
\\
&
\l\| \psi(A_1) - \psi(A_2) \r\| \leq \l\| A_1 - A_2 \r\|.
\end{align*}
\end{fact}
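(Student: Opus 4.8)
\textbf{Proof proposal for Fact \ref{fact:06}.}

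The plan is to treat the two claims separately: first the scalar two-sided bound \eqref{eq:psi-ineq}, then the operator-Lipschitz property. For the scalar inequalities, I would verify $-\log(1-x+x^2)\leq \psi(x)\leq \log(1+x+x^2)$ by exploiting symmetry. Since $\psi$ is odd and the map $x\mapsto -\log(1-x+x^2)$ is obtained from $x\mapsto \log(1+x+x^2)$ by $x\mapsto -x$, the two inequalities are equivalent, so it suffices to prove $\psi(x)\leq \log(1+x+x^2)$ for all $x\in\mb R$. I would split into the three regimes from the definition \eqref{eq:psi}. For $x>1$: $\psi(x)=1/2$, and $\log(1+x+x^2)\geq \log 3 > 1/2$, with the right-hand side increasing in $x$, so the bound holds. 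For $x<-1$: $\psi(x)=-1/2$, and $\log(1+x+x^2)\geq 0$ since $1+x+x^2\geq 3/4 \geq \dots$; actually $1+x+x^2=(x+1/2)^2+3/4>0$ always, but on $x<-1$ we have $1+x+x^2>1$, so $\log(1+x+x^2)>0>-1/2$. For $|x|\leq 1$: define $g(x)=\log(1+x+x^2)-\psi(x)=\log(1+x+x^2)-x+\sign(x)x^2/2$. One checks $g(0)=0$ and then compares derivatives; $g'(x)=\frac{1+2x}{1+x+x^2}-1+\sign(x)\,x = \frac{-x^2}{1+x+x^2}+\sign(x)\,x$, and on $[0,1]$ this equals $x\big(1-\frac{x}{1+x+x^2}\big)\geq 0$ since $\frac{x}{1+x+x^2}\leq 1$, so $g$ is nondecreasing on $[0,1]$ hence $g\geq 0$ there; the case $[-1,0]$ follows either by the same computation or by the oddness reduction. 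This is routine.

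For the operator-Lipschitz claim, I would invoke the standard theory of operator Lipschitz functions: a scalar function $f$ whose divided differences satisfy $\big|\frac{f(\lambda)-f(\mu)}{\lambda-\mu}\big|\leq L$ and which is ``sufficiently regular'' (e.g., $f'$ is Lipschitz, or $f\in C^1$ with bounded derivative and the Daleckii--Krein / double-operator-integral machinery applies) induces a map $A\mapsto f(A)$ on $\mb H^d$ that is Lipschitz in the Frobenius norm with the same constant $L$, and $C^2$ or piecewise-$C^2$ suffices for the operator-norm bound as well. Here $\psi$ is continuously differentiable on $\mb R$ with $\psi'(x)=1-|x|$ for $|x|\leq 1$ and $\psi'(x)=0$ for $|x|>1$, so $0\leq \psi'(x)\leq 1$ everywhere; moreover $\psi$ is piecewise quadratic, hence its divided differences are uniformly bounded by $1$ and $\psi$ is genuinely operator Lipschitz with constant $1$. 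I would state this, citing a reference such as \cite{carlen2010trace} or Aleksandrov--Peller, and conclude $\|\psi(A_1)-\psi(A_2)\|_{\mathrm F}\leq \|A_1-A_2\|_{\mathrm F}$ and $\|\psi(A_1)-\psi(A_2)\|\leq \|A_1-A_2\|$; alternatively one can reference Lemma A.1 / related estimates in \cite{minsker2016sub} where similar bounds for Huber-type functions are established.

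The main obstacle is the operator-norm Lipschitz bound rather than the Frobenius one: boundedness of the scalar derivative gives the Frobenius estimate almost for free via the double operator integral representation $f(A_1)-f(A_2)=\iint \frac{f(\lambda)-f(\mu)}{\lambda-\mu}\,dE_{A_1}(\lambda)(A_1-A_2)dE_{A_2}(\mu)$, but the operator-norm estimate is more delicate since the double operator integral need not be bounded on the operator norm for a merely Lipschitz scalar function. The saving grace here is that $\psi$ has bounded second derivative in each regime and is $C^1$ globally, so it lies in the Besov class $B^1_{\infty,1}$ (or one simply uses that $\psi'$ is itself Lipschitz), which guarantees operator-norm Lipschitzness with an explicit constant; verifying that the constant is exactly $1$ and not merely $O(1)$ is the point that needs the concrete piecewise structure of $\psi$ and its relation to $\Psi$ being convex with $\Psi'=\psi$, $0\leq\psi'\leq1$. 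I would therefore present the operator-norm bound by an elementary argument tailored to this specific $\psi$ — e.g., writing $\psi(A_1)-\psi(A_2)=\int_0^1 \frac{d}{ds}\psi(A_2+s(A_1-A_2))\,ds$ and bounding the integrand's operator norm by $\|A_1-A_2\|$ using $\|\psi'\|_\infty\leq 1$ together with a monotonicity/convexity argument — rather than appealing to the general Besov-space theorem.
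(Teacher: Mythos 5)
Your scalar argument for \eqref{eq:psi-ineq} is correct and matches the paper's: reduce to $[0,1]$ by oddness, compare derivatives. There is a small algebra slip in the expression for $g'(x)$ (the difference $\frac{1+2x}{1+x+x^2}-1$ equals $\frac{x-x^2}{1+x+x^2}$, not $\frac{-x^2}{1+x+x^2}$), but the conclusion $g'\geq 0$ on $[0,1]$ is still right, so this is cosmetic. The Frobenius-Lipschitz claim is also handled the same way as in the paper: a bounded-derivative scalar function is $\|\cdot\|_\F$-Lipschitz with the same constant (the paper cites Bhatia, Lemma 5.5 of Chapter 7; your double-operator-integral reasoning is equivalent).

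The gap is in the operator-norm Lipschitz bound, and you are right that this is the delicate point — but neither of your proposed routes actually closes it. The ``elementary'' integral argument, writing $\psi(A_1)-\psi(A_2)=\int_0^1 \tfrac{d}{ds}\psi(A_2+s(A_1-A_2))\,ds$ and then invoking $\|\psi'\|_\infty\leq 1$, fails for exactly the reason you yourself note two sentences earlier: the Fréchet derivative is a double operator integral whose operator-norm (Schur multiplier) norm is not controlled by $\|\psi'\|_\infty$; indeed, functions like the clipped identity $t\mapsto\max(-1,\min(1,t))$ have $0\leq\psi'\leq 1$ and piecewise-linear structure yet are not operator Lipschitz at all, so ``bounded $\psi'$ plus piecewise structure plus convexity of $\Psi$'' cannot be enough. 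The Besov-class $B^1_{\infty,1}$ route does give operator Lipschitzness, but only with an absolute constant times the Besov norm, not the sharp constant $1$ that the paper claims and actually needs. The missing ingredient is the precise structural property of this particular $\psi$: its derivative $\psi'(x)=\max(1-|x|,0)$ is a \emph{positive-definite function} in Bochner's sense, being the Fourier transform of the nonnegative integrable function $\mathrm{sinc}^2(y)=\bigl(\tfrac{\sin(\pi y)}{\pi y}\bigr)^2$. The paper then invokes Corollary 1.1.2 of Aleksandrov and Peller, which says that for $g\in C^1(\mb R)$ with $g'$ positive definite, the operator Lipschitz constant of $g$ on $\mb H^d$ is exactly $g'(0)$; here $\psi'(0)=1$. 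Without identifying this positive-definiteness, none of the mechanisms you sketch pins down the constant $1$.
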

\begin{proof}
To show \eqref{eq:psi-ineq}, it is enough to check that $x-x^2/2\geq -\log(1-x+x^2)$ for $x\in[0,1]$ and that 
$x-x^2/2\leq \log(1+x+x^2), \ x\in[0,1]$. Other inequalities follow after the change of variable $y=-x$. 
To check that $f(x):=x-x^2/2\geq -\log(1-x+x^2):=g(x)$ for $x\in[0,1]$, note that $f(0)=g(0)=0$ and that 
$f'(x)=1-x\geq 1 - \frac{x(1+x)}{1-x+x^2}=g'(x)$ for $x\in[0,1]$. Inequality $x-x^2/2\leq \log(1+x+x^2), \ x\in[0,1]$ can be established similarly.

Note that the function $\psi:\mb R\mapsto \mb R$ is Lipshitz continuous with Lipschitz constant $1$ as a function of real variable. Lemma 5.5 (Chapter 7) in \cite{bhatia2013matrix} immediately implies that it is also Lipshitz continuous in the Frobenius norm, still with Lipschitz constant $1$. 

Lipshitz property of $\psi$ in the operator norm follows from Corollary 1.1.2 in \cite{aleksandrov2016operator} which states that 
if $g\in C^1(\mb R)$ and $g'$ is positive definite, then the Lipschitz constant of $g$ (as a function on $\mb H^d$) is equal to $g'(0)$. It is easy to check that 
\[
\psi'(x)=\begin{cases}
1- |x|,~&|x|\leq1,\\
0,~&\text{otherwise},
\end{cases}
\]
which is the Fourier transform of the positive integrable function $\mathrm{sinc}(y) = \l(\frac{\text{sin}(\pi y)}{\pi y}\r)^2$, hence $\psi'$ is positive definite and the (operator) Lipschitz constant of $\psi$ is equal to $1$.
\end{proof}

\begin{fact}
\label{fact:02}
Let $T_1,\ldots,T_L$ be arbitrary $\mb H^d$-valued random variables, and $p_1,\ldots,p_L$ be non-negative weights such that $\sum_{j=1}^L p_j=1$. 
Moreover, let $T=\sum_{j=1}^L p_j T_j$ be convex combination of $T_1,\ldots, T_L$. Then 
\[
\Pr\l(\lambda_{\mx}(T)\geq t \r) \leq \max_{j=1,\ldots,L} \l[ \inf_{\theta>0} e^{-\theta t}\mb E \tr e^{\theta T_j}\r].
\]
\end{fact}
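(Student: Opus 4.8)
\textbf{Proof plan for Fact \ref{fact:02}.}
The plan is to reduce the statement about a convex combination to a statement about the individual summands via a clean monotonicity/convexity argument, and then apply the standard Chernoff-type exponential bound inside each term. First I would fix $\theta>0$ and write, using Markov's inequality applied to the (scalar) random variable $\lambda_{\mx}(T)$ together with the fact that $\lambda_{\mx}(T)\leq \frac{1}{\theta}\log \tr e^{\theta T}$ (since $e^{\theta\lambda_{\mx}(T)}\leq \tr e^{\theta T}$ by positivity of the eigenvalues of $e^{\theta T}$),
\[
\Pr\l(\lambda_{\mx}(T)\geq t\r) = \Pr\l(e^{\theta\lambda_{\mx}(T)}\geq e^{\theta t}\r) \leq e^{-\theta t}\,\mb E\,\tr e^{\theta T}.
\]
So the whole problem becomes controlling $\mb E\,\tr e^{\theta T}$ where $T=\sum_{j=1}^L p_j T_j$ is a convex combination.

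The key step is the bound $\mb E\,\tr e^{\theta T}\leq \max_{j=1,\ldots,L}\mb E\,\tr e^{\theta T_j}$. To see this, I would invoke Fact \ref{fact:01}: the map $A\mapsto \tr e^{\theta A}$ is convex on self-adjoint matrices (apply Fact \ref{fact:01} with the convex function $x\mapsto e^{\theta x}$; the stated two-point version extends to arbitrary finite convex combinations by iteration, or one simply uses the general Peierls-inequality statement). Hence for each realization,
\[
\tr e^{\theta T} = \tr e^{\theta \sum_j p_j T_j} \leq \sum_{j=1}^L p_j \,\tr e^{\theta T_j}.
\]
Taking expectations and then bounding the weighted average by its maximum,
\[
\mb E\,\tr e^{\theta T} \leq \sum_{j=1}^L p_j\, \mb E\,\tr e^{\theta T_j} \leq \max_{j=1,\ldots,L}\mb E\,\tr e^{\theta T_j}.
\]

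Combining the two displays gives, for every $\theta>0$,
\[
\Pr\l(\lambda_{\mx}(T)\geq t\r) \leq e^{-\theta t}\max_{j=1,\ldots,L}\mb E\,\tr e^{\theta T_j} = \max_{j=1,\ldots,L}\l[ e^{-\theta t}\,\mb E\,\tr e^{\theta T_j}\r].
\]
Since the left-hand side does not depend on $\theta$, I may take the infimum over $\theta>0$ of the right-hand side; because the infimum of a maximum is at least the maximum of the infima is \emph{not} what I want here — rather, I note the inequality holds for each $\theta$, so $\Pr(\lambda_{\mx}(T)\geq t)\leq \inf_{\theta>0}\max_j[e^{-\theta t}\mb E\tr e^{\theta T_j}] \leq \max_j \inf_{\theta>0}[e^{-\theta t}\mb E\tr e^{\theta T_j}]$ is false in general; instead the correct move is simply that for the optimal $\theta$ in \emph{each} term we still have a bound, but to land exactly on the claimed $\max_j \inf_\theta$ form I would argue slightly differently: pick any $j^\star$ achieving the maximum of $\mb E\tr e^{\theta T_j}$ could vary with $\theta$. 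The cleanest route, and the one I would actually write, is to observe that the bound $\Pr(\lambda_{\mx}(T)\geq t)\leq e^{-\theta t}\mb E\tr e^{\theta T_{j(\theta)}}$ where $j(\theta)$ is the maximizer, so $\Pr(\lambda_{\mx}(T)\geq t) \leq \inf_{\theta>0} e^{-\theta t}\mb E\tr e^{\theta T_{j(\theta)}} \leq \max_j \inf_{\theta>0} e^{-\theta t}\mb E\tr e^{\theta T_j}$; this last inequality can fail, so in fact the honest statement we can prove is with $\inf_\theta \max_j$, and this is presumably how the fact is used downstream. The main obstacle is therefore purely cosmetic — matching the quantifier order in the stated bound — and I would either (i) prove the $\inf_\theta\max_j$ version and note it suffices, or (ii) observe that in the paper's applications all the $T_j$ are identically distributed (they arise from partitioning $n$ observations into $k$ blocks of size $m$), so $\mb E\tr e^{\theta T_j}$ is the same for all $j$ and the two forms coincide. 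Everything else is routine: the matrix inequality $e^{\theta\lambda_{\mx}(A)}\leq \tr e^{\theta A}$, Markov, and the convexity of $A\mapsto\tr e^{\theta A}$ from Fact \ref{fact:01}.
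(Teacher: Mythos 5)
Your proof is the same as the paper's: Markov with $e^{\theta(\cdot)}$, the bound $e^{\theta\lambda_{\mx}(T)}\leq \tr e^{\theta T}$, and then the convexity of $A\mapsto\tr e^{\theta A}$ (Fact \ref{fact:01}) to get $\tr e^{\theta T}\leq\sum_j p_j\tr e^{\theta T_j}$, so the derivation is essentially identical.

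The quantifier-order subtlety you flag is real, and the paper's own proof leaves it unaddressed in exactly the same way: the displayed chain establishes, for each fixed $\theta>0$, that $\Pr(\lambda_{\mx}(T)\geq t)\leq e^{-\theta t}\sum_j p_j\mb E\tr e^{\theta T_j}\leq \max_j e^{-\theta t}\mb E\tr e^{\theta T_j}$, and infimizing over $\theta$ yields the $\inf_\theta\max_j$ form, which is \emph{a priori} larger than the $\max_j\inf_\theta$ form written in the statement. Your diagnosis of how this is resolved in practice is also the right one: in every application in the paper (Fact \ref{fact:04}, Lemma \ref{lemma:EH}, the Chernoff bound in the proof of Lemma \ref{lemma:sup-bound}), the $T_j$ arise as block-averages over permutations of i.i.d. data and are therefore identically distributed, so $\mb E\tr e^{\theta T_j}$ does not depend on $j$ and the two forms coincide. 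So there is no gap to worry about in the paper's use of the fact, but the statement as written is slightly more general than what the argument (yours or theirs) literally proves.
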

\begin{proof}
This fact is a corollary of the well-known Hoeffding's inequality (see Section 5 in \cite{hoeffding1963probability}). 
Indeed, for any $\theta>0$,
\begin{align*}
\pr{\lambda_{\mx}\l(\sum_{j=1}^L p_j T_j \r)\geq t} &\leq 
\pr{\exp\l(\theta \lambda_{\mx}\l( \sum_{j=1}^L p_j T_j \r)\r) \geq e^{\theta t}} \\
& \leq 
e^{-\theta t} \mb E \tr \exp\l(\theta\sum_{j=1}^L p_j T_j \r) 
\leq e^{-\theta t}\sum_{j=1}^L p_j \mb E \tr\exp\l( \theta T_j\r),
\end{align*}
where the last inequality follows from Fact \ref{fact:01}.
\end{proof}

\begin{fact}[Chernoff bound]
\label{fact:03}
Let $\xi_1,\ldots,\xi_n$ be a sequence of i.i.d. copies of $\xi$ such that $\pr{\xi=1}=1-\pr{\xi=0}=p\in (0,1)$, and define 
$S_n:=\sum_{j=1}^n \xi_j$.  
Then 
\[
\pr{S_n/n \geq (1+\tau)p}\leq \inf_{\theta>0}\Big[ e^{-\theta np(1+\tau)}\mb E e^{\theta S_n} \Big]\leq
\begin{cases}
e^{-\frac{\tau^2 np}{2+\tau}}, & \tau>1, \\
e^{-\frac{\tau^2 np}{3}}, & 0<\tau\leq 1.
\end{cases}
\]
\end{fact}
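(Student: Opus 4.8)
\textbf{Proof proposal for the Chernoff bound (Fact \ref{fact:03}).}

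The plan is to establish the Chernoff bound in two stages: first derive the generic exponential moment bound $\pr{S_n/n \geq (1+\tau)p} \leq \inf_{\theta>0}[e^{-\theta n p(1+\tau)}\mb E e^{\theta S_n}]$, and then evaluate/bound the right-hand side explicitly to get the two-case estimate depending on whether $\tau > 1$ or $0 < \tau \leq 1$. The first stage is a completely standard application of Markov's inequality: for any $\theta > 0$, $\pr{S_n \geq np(1+\tau)} = \pr{e^{\theta S_n} \geq e^{\theta np(1+\tau)}} \leq e^{-\theta np(1+\tau)}\mb E e^{\theta S_n}$, and this holds for every $\theta > 0$, so we may take the infimum. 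Since the $\xi_j$ are i.i.d., $\mb E e^{\theta S_n} = \big(\mb E e^{\theta \xi_1}\big)^n = (1 - p + p e^\theta)^n = (1 + p(e^\theta - 1))^n$, and using $1 + x \leq e^x$ this is at most $\exp(np(e^\theta - 1))$.

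For the second stage, substituting this into the bound gives $\pr{S_n/n \geq (1+\tau)p} \leq \exp\big(np(e^\theta - 1) - \theta np(1+\tau)\big)$ for every $\theta > 0$. The optimal choice is $\theta = \log(1+\tau)$, which yields the classical bound $\exp\big(-np[(1+\tau)\log(1+\tau) - \tau]\big)$. It then remains to lower-bound the exponent $g(\tau) := (1+\tau)\log(1+\tau) - \tau$ by $\frac{\tau^2}{2+\tau}$ for $\tau > 1$ and by $\frac{\tau^2}{3}$ for $0 < \tau \leq 1$. For the regime $0 < \tau \leq 1$: a Taylor expansion shows $g(\tau) = \sum_{j\geq 2} \frac{(-1)^j \tau^j}{j(j-1)} = \frac{\tau^2}{2} - \frac{\tau^3}{6} + \cdots$, and one checks directly (e.g., by comparing derivatives, since $g(0) = 0$) that $g(\tau) \geq \frac{\tau^2}{2} - \frac{\tau^3}{6} \geq \frac{\tau^2}{3}$ for $\tau \in (0,1]$. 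For the regime $\tau > 1$: one verifies $h(\tau) := g(\tau) - \frac{\tau^2}{2+\tau} \geq 0$ by noting $h(1) = 2\log 2 - 1 - \tfrac13 > 0$ and $h'(\tau) = \log(1+\tau) - \frac{\tau(4+\tau)}{(2+\tau)^2} \geq 0$ for $\tau \geq 1$ (the logarithm dominates the rational function on this range, which can be checked since both sides agree in growth order and the inequality holds at $\tau = 1$).

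I do not expect a genuine obstacle here — this is a textbook computation. The only mildly delicate part is the elementary calculus verifying the two inequalities $g(\tau) \geq \frac{\tau^2}{2+\tau}$ (for $\tau > 1$) and $g(\tau) \geq \frac{\tau^2}{3}$ (for $0 < \tau \leq 1$), which requires care with the derivative comparisons but is routine. Alternatively, one can cite a standard reference for the Chernoff bound for sums of Bernoulli random variables, since the statement is classical; the proof sketch above is essentially the one in any standard text on concentration inequalities.
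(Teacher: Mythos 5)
Your proof is correct and matches the standard argument; the paper itself does not supply one but merely cites Proposition 2.4 of Angluin and Valiant (1979), which proves this same bound by the same Markov-inequality-plus-MGF route you outline. The only place where your write-up is slightly thin is the final claim that $h'(\tau)=\log(1+\tau)-\tfrac{\tau(4+\tau)}{(2+\tau)^2}\geq 0$ for $\tau\geq 1$: this does hold (note $\tfrac{\tau(4+\tau)}{(2+\tau)^2}=1-\tfrac{4}{(2+\tau)^2}<1$, while $\log(1+\tau)\geq 1$ for $\tau\geq e-1$ and a direct check covers $1\leq\tau\leq e-1$), but the phrase ``the logarithm dominates the rational function'' deserves that one extra line.
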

\begin{proof}
See Proposition 2.4 in \cite{angluin1979fast}. 
\end{proof}
Let $\pi_n$ be the collection of all permutations $i:\{1,\ldots,n\}\mapsto \{1,\ldots,n\}$. 
For integers $m\leq \lfloor n/2\rfloor$, let $k=\lfloor n/m \rfloor$. 
Given a permutation $(i_1,\ldots,i_n)\in \pi_n$ and a U-statistic $U_n$ defined in \eqref{u-stat}, let 
\begin{align}
\label{eq:w}
&
W_{i_1,\ldots,i_n}:=\frac{1}{k}\l( H\l(X_{i_1},\ldots,X_{i_m} \r) + H\l(X_{i_{m+1}},\ldots,X_{i_{2m}}\r) + \ldots + 
H\l(X_{i_{(k-1)m+1}},\ldots,X_{i_{km}} \r) \r).
\end{align}
\begin{fact}
\label{fact:04}
The following equality holds:
\[
U_n = \frac{1}{n!}\sum_{(i_1,\ldots,i_n) \in \pi_n}  W_{i_1,\ldots,i_n}.
\]
\end{fact}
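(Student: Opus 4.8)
The plan is to show that the right-hand side, when expanded, produces exactly the symmetric average over ordered $m$-tuples that defines $U_n$. First I would fix one of the $k$ blocks in the definition \eqref{eq:w} of $W_{i_1,\ldots,i_n}$ — say the first block, which contributes $\frac{1}{k}H(X_{i_1},\ldots,X_{i_m})$ — and compute its contribution to $\frac{1}{n!}\sum_{(i_1,\ldots,i_n)\in\pi_n} W_{i_1,\ldots,i_n}$. By permutation symmetry of the summation over $\pi_n$, each of the $k$ blocks contributes the same amount, so the total is $k$ times the contribution of the first block. It therefore suffices to show that
\[
\frac{k}{n!}\sum_{(i_1,\ldots,i_n)\in\pi_n} \frac{1}{k} H(X_{i_1},\ldots,X_{i_m}) = U_n.
\]

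The key combinatorial step is to count, for a fixed ordered $m$-tuple $(j_1,\ldots,j_m)\in I_n^m$ of distinct indices, how many permutations $(i_1,\ldots,i_n)\in\pi_n$ have $(i_1,\ldots,i_m)=(j_1,\ldots,j_m)$: the remaining $n-m$ positions can be filled by the remaining $n-m$ indices in any order, giving $(n-m)!$ permutations. Hence
\[
\frac{1}{n!}\sum_{(i_1,\ldots,i_n)\in\pi_n} H(X_{i_1},\ldots,X_{i_m}) = \frac{(n-m)!}{n!}\sum_{(j_1,\ldots,j_m)\in I_n^m} H(X_{j_1},\ldots,X_{j_m}) = U_n,
\]
which is precisely the definition \eqref{u-stat} of $U_n$. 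Combining this with the observation that the factor $\frac1k$ in front of each block cancels against the $k$ identical block contributions completes the argument.

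I do not anticipate a serious obstacle here; the statement is essentially a bookkeeping identity. The one point requiring a little care is justifying that all $k$ blocks contribute equally: this follows because relabeling positions within $\{1,\ldots,n\}$ is a bijection of $\pi_n$ onto itself, so for any $\ell$ the map sending $(i_1,\ldots,i_n)$ to the permutation that swaps the first block of $m$ positions with the $\ell$-th block leaves the sum over $\pi_n$ invariant and identifies the $\ell$-th block's contribution with the first block's. Alternatively, one can simply note that summing \eqref{eq:w} over all permutations and using linearity directly yields $\frac{1}{k}\sum_{\ell=1}^k \big[\frac{1}{n!}\sum_{\pi_n} H(X_{i_{(\ell-1)m+1}},\ldots,X_{i_{\ell m}})\big]$, and each inner average equals $U_n$ by the counting argument above applied to positions $(\ell-1)m+1,\ldots,\ell m$ in place of $1,\ldots,m$.
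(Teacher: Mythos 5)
Your argument is correct and complete. The paper does not spell out a proof of this fact but simply cites Section 5 of Hoeffding (1963), where exactly this ``averaging over permutations'' identity is established; your combinatorial bookkeeping — counting the $(n-m)!$ permutations that pin a given block of $m$ positions to a fixed ordered $m$-tuple, then using the block-swap bijection on $\pi_n$ to identify all $k$ block contributions — is precisely the standard verification that Hoeffding's trick rests on. No gap.
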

\begin{proof}
See Section 5 in \cite{hoeffding1963probability}. 
\end{proof}

Let $Z_1,\ldots,Z_n$ be a sequence of independent copies of $Z\in \mb H^d$ such that $\l\| \mb EZ^2\r\|<\infty$. 
\begin{fact}[Matrix Bernstein Inequality]
\label{fact:bernstein}
Assume that $\|Z -\mb EZ\|\leq M$ almost surely. Then for any $\sigma\geq \l\| \mb E(Z - \mb EZ)^2 \r\|$,
\[
\Bigg\| \frac{\sum_{j=1}^n Z_j}{n} - \mb EZ \Bigg\|\leq 2\sigma\sqrt{\frac{t}{n}}\bigvee \frac{4Mt}{3n}
\]
with probability $\geq 1 - 2de^{-t}$.
\end{fact}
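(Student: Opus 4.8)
The plan is to run the matrix Laplace transform (matrix Chernoff) argument, whose only non-elementary ingredient is the subadditivity of the matrix cumulant generating function. First I would center: set $X_j := Z_j - \mb EZ$, so that $\mb EX_j = 0$, $\|X_j\|\leq M$ almost surely, and, since the matrices $\mb EX_j^2$ are nonnegative definite, $\l\|\sum_{j=1}^n\mb EX_j^2\r\|\leq \sum_{j=1}^n\l\|\mb EX_j^2\r\| = n\l\|\mb E(Z-\mb EZ)^2\r\|\leq n\sigma^2$. Writing $S_n:=\sum_{j=1}^n X_j$, a union bound over $\{\lambda_{\mx}(S_n)\geq nu\}$ and $\{\lambda_{\mx}(-S_n)\geq nu\}$ (this is what turns $d$ into $2d$) reduces everything to showing $\pr{\lambda_{\mx}(S_n)\geq nu}\leq d\,e^{-t}$ for $u := 2\sigma\sqrt{t/n}\vee\tfrac{4Mt}{3n}$.

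For the one-sided bound I would use that, for every $\theta>0$, $\pr{\lambda_{\mx}(S_n)\geq nu}\leq e^{-\theta nu}\,\mb E\,\tr e^{\theta S_n}$: this is Markov's inequality applied to $s\mapsto e^{\theta s}$ together with $e^{\theta\lambda_{\mx}(S_n)}=\lambda_{\mx}(e^{\theta S_n})\leq\tr e^{\theta S_n}$. The crucial step is the estimate $\mb E\,\tr e^{\theta S_n}\leq\tr\exp\l(\sum_{j=1}^n\log\mb E e^{\theta X_j}\r)$, which follows from Lieb's concavity theorem (equivalently, Tropp's master tail inequality for sums of independent self-adjoint random matrices). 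I expect this to be the main obstacle: non-commutativity rules out the naive factorization of $\mb E\,\tr e^{\theta\sum_j X_j}$, and one genuinely needs the concavity of $A\mapsto\tr\exp(H+\log A)$. To bound the individual exponents I would invoke the scalar fact that $x\mapsto(e^x-1-x)/x^2$ is nondecreasing on $(-\infty,3)$ and bounded there by $\tfrac{1}{2(1-x/3)}$; applying this to the eigenvalues of $\theta X_j$ (all lying in $[-\theta M,\theta M]$, with $\theta<3/M$ kept throughout) yields the operator inequality $e^{\theta X_j}\preceq I+\theta X_j+\tfrac{\theta^2}{2(1-\theta M/3)}X_j^2$, hence, after taking expectations and using $\log(I+A)\preceq A$ for $A\succeq 0$,
\[
\log\mb E e^{\theta X_j}\preceq\frac{\theta^2}{2(1-\theta M/3)}\,\mb EX_j^2.
\]

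Summing this over $j$ and using that $\tr\exp(\cdot)$ is monotone with respect to $\preceq$, that $\tr\exp(A)\leq d\,e^{\lambda_{\mx}(A)}$, and that $\l\|\sum_j\mb EX_j^2\r\|\leq n\sigma^2$, I obtain $\pr{\lambda_{\mx}(S_n)\geq nu}\leq d\exp\l(-\theta nu+\tfrac{\theta^2 n\sigma^2}{2(1-\theta M/3)}\r)$ for $0<\theta<3/M$; the choice $\theta=\tfrac{u}{\sigma^2+Mu/3}$ (which is admissible since it is always $<3/M$) collapses this to the Bernstein-type bound $d\exp\l(-\tfrac{nu^2/2}{\sigma^2+Mu/3}\r)$, and doubling for the two signs gives $\pr{\l\|S_n/n\r\|\geq u}\leq 2d\exp\l(-\tfrac{nu^2/2}{\sigma^2+Mu/3}\r)$. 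It then remains to check that $u=2\sigma\sqrt{t/n}\vee\tfrac{4Mt}{3n}$ makes the exponent at least $t$: if $2\sigma\sqrt{t/n}$ is the larger of the two one verifies $\sigma^2\geq Mu/3$, so $\sigma^2+Mu/3\leq 2\sigma^2$ and $\tfrac{nu^2/2}{\sigma^2+Mu/3}\geq\tfrac{nu^2}{4\sigma^2}=t$; if instead $\tfrac{4Mt}{3n}$ is the larger then $\sigma^2\leq Mu/3$, so $\sigma^2+Mu/3\leq\tfrac{2Mu}{3}$ and $\tfrac{nu^2/2}{\sigma^2+Mu/3}\geq\tfrac{3nu}{4M}=t$. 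This completes the argument; in the write-up one may instead simply cite Theorem~6.1.1 of Tropp's \emph{User-friendly tail bounds for sums of random matrices} for the first two displays and perform only this final inversion into the $\vee$-form.
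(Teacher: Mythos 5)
Your argument is correct, and it lands in the same place as the paper: the paper's entire ``proof'' is the citation ``See Theorem~1.4 in \cite{tropp2012user},'' which is exactly the reference you flag at the end of your write-up, plus the final inversion of Tropp's Bernstein exponent into the $2\sigma\sqrt{t/n}\vee\tfrac{4Mt}{3n}$ form that the paper leaves implicit. The full Laplace-transform derivation you give (centering, Lieb/Tropp subadditivity, the scalar estimate on $(e^x-1-x)/x^2$, the choice $\theta=u/(\sigma^2+Mu/3)$) is a faithful reconstruction of Tropp's proof and is not needed given the citation, but both the derivation and the case analysis showing the exponent is at least $t$ check out.
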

\begin{proof}
See Theorem 1.4 in \cite{tropp2012user}.
\end{proof}
Assume that $\l\| H\l(X_{i_1},\ldots, X_{i_m}\r) \r\|\leq M$ almost surely. Together with Facts \ref{fact:04} and \ref{fact:02}, Bernstein's inequality can be used to show that 
\begin{align}
\label{eq:p10}
&
\Big\| U_n - \mb EH \Big\|\leq 2 \big\| \mb E(H - \mb E H)^2 \big\|^{1/2}\sqrt{\frac{t}{k}} \bigvee \frac{4Mt}{3n}
\end{align}
with probability $\geq 1 - 2de^{-t}$. This corollary will be useful in the sequel.
\begin{fact}
\label{fact:legacy}
Let $\psi(\cdot)$ be defined by \eqref{eq:psi}. Then the following inequalities hold for all $\theta>0$:
\begin{align*}
&
\mb E \tr \exp\l( \sum_{j=1}^n \l( \psi(\theta Z_j) -\theta\mb EZ\r) \r)\leq \tr \exp\l( n\theta^2 \mb EZ^2 \r), \\
&
\mb E \tr \exp\l( \sum_{j=1}^n \l( \theta\mb EZ - \psi(\theta Z_j)\r) \r)\leq \tr \exp\l( n \theta^2 \mb EZ^2 \r).
\end{align*}
\end{fact}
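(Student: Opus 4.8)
The plan is to reduce the statement to the subadditivity of matrix cumulant generating functions, treating the deterministic matrix $\theta\,\mb E Z$ as an additive shift that is carried through the whole argument. I will use repeatedly that expectation and addition of a fixed self-adjoint matrix preserve the Loewner order $\preceq$, that $\log$ is operator monotone on the positive definite cone, and that $A\preceq B$ implies $\tr e^A\leq\tr e^B$ (since then $\lambda_i(A)\leq\lambda_i(B)$ for all $i$).

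First I would lift the scalar bound $\psi(x)\leq\log(1+x+x^2)$ of Fact~\ref{fact:06} to the matrix level. Since $e^{\psi(x)}\leq 1+x+x^2$ and the matrices $e^{\psi(\theta Z)}$ and $I+\theta Z+\theta^2 Z^2$ are both functions of the same self-adjoint $Z$ (hence simultaneously diagonalizable), it follows that $e^{\psi(\theta Z)}\preceq I+\theta Z+\theta^2 Z^2$; taking expectations, $\mb E\,e^{\psi(\theta Z)}\preceq G_+$, where $G_+:=I+\theta\,\mb E Z+\theta^2\,\mb E Z^2=\mb E(I+\theta Z+\theta^2 Z^2)$. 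The crucial point is that $G_+\succ 0$: indeed, $1+x+x^2>0$ for every real $x$ makes $I+\theta Z+\theta^2 Z^2$ positive definite for every realization of $Z$, hence so is its expectation.

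Next I would invoke the following shifted form of the subadditivity bound for matrix cumulant generating functions (cf. \cite{tropp2012user}): for deterministic $D\in\mb H^d$ and i.i.d. self-adjoint $X_1,\dots,X_n$,
\[
\mb E\,\tr\exp\left(D+\sum_{j=1}^n X_j\right)\leq\tr\exp\left(D+n\log\mb E\,e^{X_1}\right).
\]
This is proved exactly as the unshifted version --- peeling off one $X_j$ at a time and applying Lieb's concavity theorem ($A\mapsto\tr\exp(H+\log A)$ is concave on positive definite $A$) and Jensen's inequality, the deterministic $D$ being carried along inside $H$. Applying it with $D=-n\theta\,\mb E Z$ and $X_j=\psi(\theta Z_j)$, and then using $\mb E\,e^{\psi(\theta Z)}\preceq G_+$ together with operator monotonicity of $\log$ and monotonicity of $\tr\exp$, we obtain
\[
\mb E\,\tr\exp\left(\sum_{j=1}^n\left(\psi(\theta Z_j)-\theta\,\mb E Z\right)\right)\leq\tr\exp\left(-n\theta\,\mb E Z+n\log G_+\right).
\]

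To finish, write $G_+=I+M$ with $M:=\theta\,\mb E Z+\theta^2\,\mb E Z^2$; since $G_+\succ 0$ we have $M\succ -I$, so the scalar inequality $\log(1+x)\leq x$ on $(-1,\infty)$ lifts to $\log G_+=\log(I+M)\preceq M$. Adding the fixed matrix $-n\theta\,\mb E Z$ and scaling by $n$ gives $-n\theta\,\mb E Z+n\log G_+\preceq -n\theta\,\mb E Z+nM=n\theta^2\,\mb E Z^2$, and a last use of $\tr\exp$-monotonicity proves the first inequality. The second is entirely parallel: Fact~\ref{fact:06} also gives $-\psi(x)\leq\log(1-x+x^2)$, so $\mb E\,e^{-\psi(\theta Z)}\preceq G_-:=I-\theta\,\mb E Z+\theta^2\,\mb E Z^2$, which is positive definite because $1-x+x^2>0$; applying the shifted bound with $D=+n\theta\,\mb E Z$ and $X_j=-\psi(\theta Z_j)$ and then $\log G_-\preceq -\theta\,\mb E Z+\theta^2\,\mb E Z^2$ yields the claim. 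The one genuine obstacle is the non-commutativity of $\mb E Z$ and $\mb E Z^2$, which prevents splitting $\theta\,\mb E Z$ off multiplicatively or folding it into the per-summand transform $\mb E\,e^{X_1}$; keeping $\pm n\theta\,\mb E Z$ as a purely additive shift until the last step, where the single-matrix inequality $\log(I+M)\preceq M$ is invoked, is exactly what makes the bookkeeping go through.
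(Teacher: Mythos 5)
Your proof is correct, and it uses the same core machinery the paper relies on: the scalar two-sided bound $-\log(1-x+x^2)\leq\psi(x)\leq\log(1+x+x^2)$ lifted to a Loewner bound via simultaneous diagonalization, followed by the Lieb/Tropp master trace-exponential bound (which is exactly the content of the cited Lemma~3.1 of \cite{minsker2016sub}) and the operator inequality $\log(I+M)\preceq M$. The only cosmetic difference is that the paper disposes of the whole argument by reference, whereas you reconstruct it in full; your observation that Tropp's master bound already carries a free deterministic summand $D$, so the shift $\mp n\theta\,\mb E Z$ can be threaded through additively rather than folded into the per-term moment generating function, is precisely the right way to handle the non-commutativity of $\mb E Z$ and $\mb E Z^2$.
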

\begin{proof}
These inequalities follow from \eqref{eq:psi-ineq} and Lemma 3.1 in \cite{minsker2016sub}. 
Note that we did not assume boundedness of $\|Z -\mb EZ\|\leq M$ above.
\end{proof}
Finally, we will need the following statement related to the self-adjoint dilation \eqref{eq:dilation}.
\begin{fact}
\label{fact:dilation}
Let $S\in \mb C^{d_1\times d_1}, \ T\in \mb C^{d_2\times d_2}$ be self-adjoint matrices, and $A\in \mb C^{d_1\times d_2}$. 
Then 
\[
\l\| \begin{pmatrix}
S & A \\
A^\ast & T
\end{pmatrix}\r\|
\geq
\l\| \begin{pmatrix}
0 & A \\
A^\ast & 0
\end{pmatrix} \r\|.
\]
\end{fact}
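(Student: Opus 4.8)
The plan is to exploit the variational characterization of the operator norm together with a well-chosen test subspace. Write $B = \begin{pmatrix} S & A \\ A^\ast & T \end{pmatrix}$ and $B_0 = \begin{pmatrix} 0 & A \\ A^\ast & 0 \end{pmatrix}$, both acting on $\mb C^{d_1}\oplus \mb C^{d_2}$. Since these are self-adjoint, $\|B\| = \max_{\|z\|_2=1}|\langle z, Bz\rangle|$ and similarly for $B_0$. The first step is to recall that $\|B_0\| = \|A\|$ (this is the computation $\m D(A)^2 = \mathrm{diag}(AA^\ast, A^\ast A)$ already noted in the excerpt), so it suffices to exhibit a unit vector $z = (u,v) \in \mb C^{d_1}\oplus\mb C^{d_2}$ with $|\langle z, Bz\rangle| \geq \|A\|$.

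The key idea is a sign trick: pick unit vectors $u\in\mb C^{d_1}$, $v\in\mb C^{d_2}$ that are top singular vectors of $A$, so $\langle u, Av\rangle = \|A\|$ (after multiplying $v$ by a unimodular phase we may assume this inner product is real and equal to $\|A\|$). Now consider the two test vectors $z_\pm = \tfrac{1}{\sqrt2}(u, \pm v)$. A direct expansion gives
\[
\langle z_\pm, B z_\pm \rangle = \tfrac12\langle u, Su\rangle + \tfrac12\langle v, Tv\rangle \pm \tfrac12\big(\langle u, Av\rangle + \langle Av, u\rangle\big) = \tfrac12\big(\langle u, Su\rangle + \langle v, Tv\rangle\big) \pm \|A\|,
\]
using that $\langle u,Av\rangle$ is real. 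Writing $c := \tfrac12(\langle u,Su\rangle + \langle v,Tv\rangle)\in\mb R$, the two quadratic-form values are $c + \|A\|$ and $c - \|A\|$; at least one of these has absolute value $\geq \|A\|$, namely $|c+\|A\||$ if $c\geq 0$ and $|c-\|A\||$ if $c\leq 0$. Hence $\|B\| \geq \max(|c+\|A\||, |c-\|A\||)\geq \|A\| = \|B_0\|$, which is the claim.

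I do not expect a genuine obstacle here; the only point requiring a little care is the reduction to the real case (absorbing the phase of $\langle u, Av\rangle$ into $v$, which is harmless since it does not change $\|v\|_2$, $\langle v,Tv\rangle$, or $\|A\|$), and the observation that for a real number $c$ one always has $\max(|c+a|,|c-a|)\geq a$ when $a\geq 0$. Everything else is the standard Rayleigh-quotient bound for self-adjoint matrices, so the argument is short.
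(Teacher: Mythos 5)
Your proof is correct. The expansion $\langle z_\pm, Bz_\pm\rangle = c \pm \mathrm{Re}\,\langle u, Av\rangle$ with $c = \tfrac12\big(\langle u,Su\rangle + \langle v,Tv\rangle\big)$ is right (here one uses $\langle v, A^\ast u\rangle = \overline{\langle u, Av\rangle}$, and $c$ is real because $S,T$ are self-adjoint), the phase normalization of $v$ is harmless, and the elementary inequality $\max(|c+a|,|c-a|)\ge a$ for real $c$ and $a\ge 0$ closes the argument via the Rayleigh-quotient characterization $\|B\| = \max_{\|z\|_2=1}|\langle z, Bz\rangle|$.

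Note, however, that the paper does not actually spell out a proof — it only cites Lemma 2.1 of \cite{minsker2016sub}. The argument behind that lemma is a different and somewhat slicker one based on unitary symmetry: let $J = \begin{pmatrix} I_{d_1} & 0 \\ 0 & -I_{d_2}\end{pmatrix}$, which is unitary, and observe that
\[
J B J^\ast = \begin{pmatrix} S & -A \\ -A^\ast & T \end{pmatrix}, \qquad\text{so}\qquad B_0 = \tfrac12\big(B - JBJ^\ast\big).
\]
The triangle inequality and unitary invariance of $\|\cdot\|$ then give $\|B_0\| \le \tfrac12\big(\|B\| + \|JBJ^\ast\|\big) = \|B\|$, with no need to identify $\|B_0\|$ as $\|A\|$ or to construct explicit test vectors. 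Both proofs are short and correct; the averaging argument is arguably the cleaner one-liner and generalizes instantly to any unitarily invariant norm, while your test-vector argument makes the mechanism (the off-diagonal block controls the quadratic form in the right direction) more concrete. Either would serve as a self-contained replacement for the external citation.
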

\begin{proof}
See Lemma 2.1 in \cite{minsker2016sub}.
\end{proof}

\subsection{Proof of Lemma \ref{lemma:optimization}}
\label{proof:opt}

(1) Convexity follows from Fact \ref{fact:01} since the sum of convex functions is a convex function. \\
(2) The expression for the gradient follows from Fact \ref{fact:05}. 
To show that $\nabla F_\theta(U)$ is Lipschitz continuous, note that 
\begin{multline*}
\Big\| \frac{1}{\theta}\psi\l( \theta\l(H_{i_1,\ldots,i_m} - U_1 \r)\r)  - \frac{1}{\theta}\psi\l( \theta\l(H_{i_1,\ldots,i_m} - U_2 \r)\r) \Big\| 
\\
\leq \Big\| \frac{1}{\theta} \l( \theta\l(H_{i_1,\ldots,i_m} - U_1 \r) - \theta\l(H_{i_1,\ldots,i_m} - U_2 \r) \r) \Big\|  
= \l\| U_1 - U_2 \r\|, 
\end{multline*}
\begin{multline*}
\Big\| \frac{1}{\theta}\psi\l( \theta\l(H_{i_1,\ldots,i_m} - U_1 \r)\r)  - \frac{1}{\theta}\psi\l( \theta\l(H_{i_1,\ldots,i_m} - U_2 \r)\r) \Big\|_{\mathrm{F}} 
\\
\leq 
\Big\| \frac{1}{\theta} \l( \theta\l(H_{i_1,\ldots,i_m} - U_1 \r) - \theta\l(H_{i_1,\ldots,i_m} - U_2 \r) \r) \Big\|_{\mathrm{F}} 
= \l\| U_1 - U_2 \r\|_{\mathrm{F}}
\end{multline*} 
by Fact \ref{fact:06}. Since the convex combination of Lipschitz continuous functions is still Lipschitz continuous, the claim follows. \\
(3) Since $\wh U_n^\star$ is the solution of the problem \eqref{eq:estimator1}, the directional derivative 
\[
dF_\theta(\wh U_n^\star; B) := \lim_{t\to 0}\frac{F_\theta(\wh U_n^\star + tB) - F_\theta(\wh U_n^\star)}{t} 
= \tr \l( \nabla F_\theta(\wh U_n^\star) \, B\r)
\]
is equal to 0 for any $B\in \mb H^d$. Result follows by taking consecutively $B_{i,j}=e_i e_j^T + e_j e_i^T, \ i\ne j$ and $B_{i,i} = e_i e_i^T, \ i=1,\ldots,d$, where $\l\{ e_1,\ldots,e_d\r\}$ is the standard Euclidean basis. 
\qed

\subsection{Proof of Theorem \ref{thm:new-performance}}
\label{proof:new-performance}

The proof is based on the analysis of the gradient descent iteration for the problem \eqref{eq:estimator1}. 
Let 
\[
G(U) := \tr F_\theta(U) = \tr \l[ \frac{1}{\theta^2}\frac{(n-m)!}{n!}\sum_{(i_1,\ldots,i_m)\in I^m_n}\Psi\Big( \theta \left(H(X_{i_1},\ldots,X_{i_m}) - U \right) \Big) \r],
\]
and define 
\begin{align*}
U^{(0)}_n :&= \mb EH = \mb E H(X_1,\ldots,X_m), \\
U^{(j)}_n :&= U^{(j-1)}_n - \nabla G\l( U^{(j-1)}_n \r) \\
&
=U^{(j-1)}_n+\frac{1}{\theta}\frac{(n-m)!}{n!}\sum_{(i_1,\ldots,i_m)\in I^m_n}\psi\Big(\theta\l( H_{i_1\ldots i_m} - U^{(j-1)}_n\r) \Big), \ j\geq 1,
\end{align*}
which is the gradient descent for \eqref{eq:estimator1} with the step size equal to $1$. 
We will show that with high probability (and for an appropriate choice of $\theta$), $U^{(j)}_n$ does not escapes a small neighborhood of $\mb E H(X_1,\ldots,X_m)$. 
The claim of the theorem then easily follows from this fact. 

\noindent Give a permutation $(i_1,\ldots,i_n)\in \pi_n$ and $U\in\mb H^d$, let $k = \lfloor n/m \rfloor$ and
\begin{align*}
Y_{i_1\ldots i_m}(U;\theta)&:=\psi\l(\theta\l(H_{i_1\ldots i_m}-U\r)\r), 
\\
W_{i_1\ldots i_n}(U;\theta)& := \frac{1}{k}\l( Y_{i_1\ldots i_m}(U;\theta) + Y_{i_{m+1}\ldots i_{2m}}(U;\theta) + \ldots + Y_{i_{(k-1)m+1}\ldots i_{km}}(U;\theta) \r).
\end{align*}
Fact \ref{fact:04} implies that
\begin{equation}
\label{inter-permutation}
\nabla G\l( U \r)=\frac{(n-m)!}{n!}\sum_{(i_1\ldots i_m)\in I^m_n} \frac{1}{\theta}\psi\Big(\theta\l( H_{i_1\ldots i_m} - U\r) \Big) = \frac{1}{n!}\sum_{(i_1\ldots i_n)\in\pi_n}\frac{1}{\theta} W_{i_1 \ldots i_n}(U;\theta),
\end{equation}
where $\pi_n$ ranges over all permutations of $(1,\ldots,n)$. 
Next, for $j\geq 1$ we have 
\begin{align}
\label{eq:b10}
\nonumber
\Big\| U_n^{(j)} - \mb EH \Big\| & = 
\Bigg\|  \frac{1}{\theta}\frac{(n-m)!}{n!}\sum_{(i_1,\ldots,i_m)\in I^m_n}\psi\Big(\theta\l( H_{i_1\ldots i_m} - U^{(j-1)}_n\r) 
- \l( \mb EH - U^{(j-1)}_n \r) \Bigg\| \\
&
=\Bigg\| \frac{1}{\theta n!}\sum_{(i_1\ldots i_n)\in\pi_n}W_{i_1 \ldots i_n}(U_n^{(j-1)};\theta) -  \l( \mb EH - U^{(j-1)}_n \r) \Bigg\| \\
& \nonumber
\leq 
\Bigg\| \frac{1}{\theta n!}\sum_{(i_1\ldots i_n)\in\pi_n} 
\Big( W_{i_1 \ldots i_n}(U_n^{(j-1)};\theta) - W_{i_1,\ldots,i_n}(\mb EH;\theta_\sigma) \Big) -  \l( \mb EH - U^{(j-1)}_n \r) \Bigg\| \\
& \nonumber
+ \Bigg\| \frac{1}{\theta_\sigma}\frac{1}{n!}\sum_{\pi_n} W_{i_1,\ldots,i_n}(\mb E H;\theta_\sigma) \Big)  \Bigg\|.
\end{align}
The following two lemmas provide the bounds that allows to control the size of $\Big\| U_n^{(j)} - \mb EH \Big\|$. 
For a given $\sigma^2\geq \l\| \mb E(H-\mb EH)^2 \r\|$ and $\theta_\sigma = \frac{1}{\sigma}\sqrt{\frac{2 t}{k}}$, consider the random variable
\[
L_n(\delta) = \sup_{\|U-\mb EH\|\leq\delta} \l\| \frac{1}{\theta_\sigma}\frac{1}{n!}\sum_{\pi_n}
\Big( W_{i_1,\ldots,i_n}(U;\theta_\sigma) 
-W_{i_1,\ldots,i_n}(\mb EH;\theta_\sigma) \Big)   - (\mb EH - U) 
\r\|.
\]
\begin{lemma}
\label{lemma:sup-bound}
With probability $\geq 1 - (2d+1)e^{-t} $, for all $\delta\leq \frac{1}{2}\frac{1}{\theta_\sigma}$ simultaneously, 
\[
L_n(\delta) \leq \l( \mathrm{r}_H\frac{26 t}{k}+ \frac{1}{2}\r)\delta 
+ \frac{3(1+\sqrt{2})}{2} \sigma \sqrt{\frac{t}{k}}.
\]
\end{lemma}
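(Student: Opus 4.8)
The goal is a uniform-in-$\delta$ bound on the random quantity $L_n(\delta)$, which measures how much the ``linearization'' of the gradient map $U\mapsto \nabla G(U)$ around $\mathbb EH$ deviates from the true increment. My plan is to split $L_n(\delta)$ into a deterministic bias term and a stochastic fluctuation term. For the deterministic part: pull the expectation inside and use that $\psi$ is operator-Lipschitz with constant $1$ (Fact 2.8) together with a second-order Taylor-type estimate for $\psi$ near zero. Concretely, writing $\frac1{\theta_\sigma}\psi(\theta_\sigma x)=x-\frac12\operatorname{sign}(x)\theta_\sigma x^2$ on $|\theta_\sigma x|\le 1$, the map $\frac1{\theta_\sigma}\psi(\theta_\sigma\,\cdot\,)$ differs from the identity by something controlled by $\theta_\sigma$ times a quadratic, so $\|\mathbb E[\frac1{\theta_\sigma}W(U;\theta_\sigma)-\frac1{\theta_\sigma}W(\mathbb EH;\theta_\sigma)]-(U-\mathbb EH)\|$ should be bounded by roughly $\theta_\sigma\cdot(\text{something like }\mathbb E\|H-\mathbb EH\|^2\text{ plus }\delta^2)$; once $\delta\le \frac1{2\theta_\sigma}$ and using $\theta_\sigma=\frac1\sigma\sqrt{2t/k}$ and $\sigma^2\ge\|\mathbb E(H-\mathbb EH)^2\|$, this yields the $\mathrm r_H\frac{26t}{k}\delta$ contribution via the trace bound $\tr \mathbb E(H-\mathbb EH)^2=\mathrm r_H\,\|\mathbb E(H-\mathbb EH)^2\|$. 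The factor $\frac12\delta$ I would absorb either here or from the fluctuation term's Lipschitz modulus.

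For the stochastic part, the quantity $\frac1{n!}\sum_{\pi_n}(W_{i_1\ldots i_n}(U;\theta_\sigma)-W_{i_1\ldots i_n}(\mathbb EH;\theta_\sigma))$ minus its mean is an average (over permutations) of sums of $k$ independent blocks, so I would condition on a permutation, apply the exponential/trace-moment machinery of Fact 2.14 (the $\psi$-tailored bounds from \cite{minsker2016sub}) to each block-sum, and then average over permutations using Fact 2.9 (convexity of $\lambda_{\max}\circ\tr\exp$). This gives a bound with probability $\ge 1-2de^{-t}$ for a fixed $U$ of order $\sigma\sqrt{t/k}$. To make this uniform over $\|U-\mathbb EH\|\le\delta$ \emph{and} over $\delta$ simultaneously, I would use the Lipschitz property: as a function of $U$, $W_{i_1\ldots i_n}(U;\theta_\sigma)$ is $1$-Lipschitz (again Fact 2.8), so the supremum over the ball of radius $\delta$ of the centered process is at most the value at the center plus the Lipschitz-type term $\le \frac12\delta$ coming from the deterministic ``minus $(\mathbb EH-U)$'' correction — in fact the whole point of subtracting $(\mathbb EH-U)$ is that $\frac1{\theta_\sigma}\psi(\theta_\sigma\cdot)$ has derivative in $[0,1]$, so the map $U\mapsto \frac1{\theta_\sigma}W(U;\theta_\sigma)-(\mathbb EH-U)$ is \emph{monotone} with increments bounded by $\delta$ in operator norm, killing the need for a genuine chaining/covering argument. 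The extra $+1$ in the probability $1-(2d+1)e^{-t}$ I expect to come from a single scalar Chernoff bound (Fact 2.11) used to control the number of ``good'' blocks / the event that block sizes behave, or from discretizing $\delta$ over a geometric grid and union-bounding with a convergent series.

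I would organize the writeup as: (i) reduce to a fixed $\delta$ and a fixed $U$ on the boundary using monotonicity/Lipschitzness; (ii) bound the deterministic bias by $\mathrm r_H\frac{26t}{k}\delta+\frac12\delta$ using the Taylor estimate for $\psi$ and the trace identity; (iii) bound the fluctuation by $\frac{3(1+\sqrt2)}{2}\sigma\sqrt{t/k}$ via Facts 2.9, 2.13, 2.14 applied blockwise then averaged over permutations; (iv) make it uniform in $\delta$ by a geometric grid $\delta\in\{2^{-\ell}/\theta_\sigma\}$ plus a union bound, adjusting constants. The main obstacle, I expect, is step (iii): getting the constant $\frac{3(1+\sqrt2)}{2}$ exactly requires carefully tracking how the permutation-averaging interacts with the one-sided ($\lambda_{\max}$ only) trace-exponential bound of Fact 2.14, applying it to both $T$ and $-T$ to get a two-sided operator-norm bound (hence the factor $2d$ and the doubling), and then combining the ``$\sqrt2$'' from $\theta_\sigma=\frac1\sigma\sqrt{2t/k}$ with the block-variance estimate; the uniformity over $\delta$ is comparatively routine once the monotonicity observation is in place.
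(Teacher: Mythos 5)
Your plan has the right ingredients (Lipschitz property of $\psi$, explicit quadratic form of $\psi$ near zero, a scalar Chernoff bound, a matrix concentration bound) but assembles them in a way that has a genuine gap in the bias step, and misidentifies where each tool is used.

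\textbf{The missing idea: truncation by $\chi_{i_1\ldots i_m}$.} Your ``deterministic bias'' step proposes to pull the expectation inside and Taylor-expand $\tfrac1{\theta_\sigma}\psi(\theta_\sigma(H-U))$ around $H-U$. This cannot work as stated, because the quadratic formula for $\psi$ only holds on $\{|x|\le 1\}$, and $H$ is heavy-tailed; on the complement $\tfrac1{\theta_\sigma}\psi(\theta_\sigma x)-x$ is not quadratic, it is linear in $x$ with slope $-1$, so the ``second-order remainder'' is not controlled by $\mathbb E(H-\mathbb EH)^2$. The paper instead introduces the event indicator $\chi_{i_1\ldots i_m}=I\{\|H_{i_1\ldots i_m}-\mathbb EH\|\le \tfrac1{2\theta_\sigma}\}$ and decomposes the empirical average accordingly. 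On $\{\chi=1\}$ and $\|U-\mathbb EH\|\le\delta\le\tfrac1{2\theta_\sigma}$ the quadratic formula for $\psi$ holds exactly; the contribution of $\{\chi=0\}$ is handled purely via the Lipschitz bound $\le\delta$ per term, multiplied by the empirical \emph{fraction} of indices with $\chi=0$. That fraction is bounded by Chebyshev (giving $\mathrm r_H\frac{8t}{k}$ for the probability) plus a scalar Chernoff bound (Fact~\ref{fact:03} combined with Facts~\ref{fact:04},~\ref{fact:02}), and this is exactly where the $\mathrm r_H\frac{26t}{k}\delta$ term comes from -- two such terms, each $\le \mathrm r_H\frac{13t}{k}\delta$. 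It does \emph{not} come from a Taylor remainder combined with the trace identity, as you suggested. You correctly guessed that the $+1$ in the probability is a scalar Chernoff, but its role is central to the bias estimate, not a side condition on ``block sizes.''

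\textbf{The fluctuation term: Matrix Bernstein, not the $\psi$-tailored exponential.} Your step (iii) proposes the trace-exponential bound tailored to $\psi$ (Fact~\ref{fact:legacy}) for the fluctuation. In this lemma the paper does not use that fact at all; it is used only in Lemma~\ref{lemma:EH}. After truncating by $\chi$, the quadratic remainder $S_{i_1\ldots i_m}(\cdot)\chi_{i_1\ldots i_m}$ is a \emph{bounded} random matrix (bounded by $(2\theta_\sigma)^{-2}$ in operator norm), so the fluctuation is controlled by the ordinary Matrix Bernstein inequality (Fact~\ref{fact:bernstein}, via the inequality~\eqref{eq:p10}), not by the log-moment bound for $\psi$. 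The constant $\tfrac{3(1+\sqrt2)}{2}$ decomposes as $\tfrac32\sqrt2$ (from the deterministic piece $\tfrac{3\theta_\sigma}{2}\sigma^2$ with $\theta_\sigma=\tfrac1\sigma\sqrt{2t/k}$) plus $\tfrac32$ (from Bernstein). Getting this constant by the route you propose would require a different, and likely messier, computation.

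\textbf{Uniformity in $\delta$ is free; no grid needed.} You propose a geometric grid over $\delta$ plus a union bound. In the paper's proof the bound holds \emph{simultaneously} for all $\delta\le\tfrac1{2\theta_\sigma}$ on the single good event $\mathcal E\cap\{\text{Bernstein event}\}$, because once those events hold the estimate is a deterministic chain of triangle, Lipschitz, and operator inequalities that scale explicitly with $\delta$. A grid plus union bound would add a logarithmic factor and change the stated constants. Your monotonicity observation about $U\mapsto \tfrac1{\theta_\sigma}\psi(\theta_\sigma(H-U))+U$ is a nice alternative way to see why no covering of the $U$-ball is needed, but the paper gets the same effect directly from the $1$-Lipschitz property of $\psi$ (Fact~\ref{fact:06}) applied term by term.

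Bottom line: your plan would stall at the bias step. The fix is precisely the truncation indicator $\chi$, which both validates the quadratic expansion where it holds and converts the complementary set into a small empirical fraction controllable by Chernoff; once you have that, Matrix Bernstein on the bounded truncated quadratics replaces the $\psi$-exponential machinery you were reaching for.
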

\noindent The proof of this lemma is given in Section \ref{proof:lemma-sup-bound}. 
\begin{lemma}
\label{lemma:EH}
With probability $\geq 1 - 2d e^{-t}$, 
\[
\Bigg\| \frac{1}{\theta_\sigma}\frac{1}{n!}\sum_{\pi_n} W_{i_1,\ldots,i_n}(\mb E H;\theta_\sigma) \Big)  \Bigg\| \leq 
\frac{3}{\sqrt{2}}\sigma\sqrt{\frac{t}{k}}.
\]
\end{lemma}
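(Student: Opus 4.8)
The quantity to bound is
\[
\Bigl\| \tfrac{1}{\theta_\sigma}\tfrac{1}{n!}\sum_{\pi_n} W_{i_1,\ldots,i_n}(\mb EH;\theta_\sigma) \Bigr\|,
\]
where by \eqref{eq:w} and the definition of $W$, $\tfrac{1}{n!}\sum_{\pi_n} W_{i_1,\ldots,i_n}(\mb EH;\theta_\sigma)$ is a convex combination (over permutations) of averages of $k$ terms of the form $\psi(\theta_\sigma(H_{i_1\ldots i_m}-\mb EH))$, each block built from a disjoint group of $m$ indices so the $k$ summands within a block are i.i.d.\ (conditionally on the permutation, they are functions of disjoint subsets of the $X$'s). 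The strategy is the standard three-step pattern for matrix concentration via Hoeffding's averaging-over-permutations trick: (i) pass to the operator norm through a symmetrization that controls both $\lambda_{\mx}$ and $-\lambda_{\mn}$; (ii) use Fact \ref{fact:02} to reduce the convex combination over $\pi_n$ to a maximum over a single representative block average of $k$ i.i.d.\ terms; (iii) apply Fact \ref{fact:legacy} to that block average to get the exponential moment bound, then optimize over the free parameter.

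\textbf{Step 1 (operator norm via two eigenvalue bounds).} Write $S := \tfrac{1}{n!}\sum_{\pi_n} W_{i_1,\ldots,i_n}(\mb EH;\theta_\sigma)$. Since $\|S\| = \lambda_{\mx}(S) \vee \lambda_{\mx}(-S)$, it suffices to bound $\Pr(\lambda_{\mx}(\pm S) \geq \theta_\sigma s)$ for the appropriate $s$ and take a union bound, which produces the factor $2d$ (the $d$ from $\tr$, the $2$ from the two signs). Each of $S$ and $-S$ is a convex combination over $\pi_n$ of block averages $W_{i_1,\ldots,i_n}$, so Fact \ref{fact:02} gives
\[
\Pr\bigl(\lambda_{\mx}(S)\geq \theta_\sigma s\bigr) \leq \max_{\pi}\Bigl[\inf_{\beta>0} e^{-\beta \theta_\sigma s}\,\mb E\,\tr\, e^{\beta W_\pi}\Bigr],
\]
and likewise for $-S$. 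By exchangeability, the maximum over $\pi$ is attained at (equivalently, equals) the value for the canonical block partition $\{1,\ldots,m\},\{m+1,\ldots,2m\},\ldots$, so we may fix that partition.

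\textbf{Step 2 (apply Fact \ref{fact:legacy} to the block average).} For the fixed partition, $W_\pi = \tfrac1k\sum_{\ell=1}^k \psi(\theta_\sigma(H^{(\ell)} - \mb EH))$ with $H^{(1)},\ldots,H^{(k)}$ i.i.d.\ copies of $H_{1\ldots m}$. Taking $\beta = k$ in the exponential moment and invoking the first inequality of Fact \ref{fact:legacy} with $Z = H_{1\ldots m}-\mb EH$ (note $\mb E Z = 0$, so the $\theta \mb E Z$ term drops and we use $\theta = \theta_\sigma$, $n \rightsquigarrow k$ there, with the rescaling $W_\pi = \tfrac1k\sum\psi(\theta_\sigma Z^{(\ell)})$ so $kW_\pi = \sum_\ell \psi(\theta_\sigma Z^{(\ell)})$):
\[
\mb E\,\tr\, e^{k W_\pi} = \mb E\,\tr\,\exp\Bigl(\sum_{\ell=1}^k \psi(\theta_\sigma Z^{(\ell)})\Bigr) \leq \tr\exp\bigl(k\theta_\sigma^2\,\mb E Z^2\bigr) \leq d\, e^{k\theta_\sigma^2 \sigma^2}.
\]
Combining with Step 1, $\Pr(\lambda_{\mx}(S)\geq \theta_\sigma s) \leq d\,e^{-k s/\sigma^2 \cdot \theta_\sigma \sigma^2 /\sigma^2 \cdots}$ — more carefully, plugging $\beta = k$ gives $\Pr(\lambda_{\mx}(S)\ge \theta_\sigma s) \le d\exp(-k\theta_\sigma s + k\theta_\sigma^2\sigma^2)$. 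The same bound holds for $-S$ by the second inequality of Fact \ref{fact:legacy}.

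\textbf{Step 3 (choose $s$ and plug in $\theta_\sigma$).} With $\theta_\sigma = \tfrac1\sigma\sqrt{2t/k}$, we have $k\theta_\sigma^2\sigma^2 = 2t$. Choosing $\theta_\sigma s$ so that $k\theta_\sigma s - 2t = t$, i.e.\ $s = \tfrac{3t}{k\theta_\sigma} = \tfrac{3t}{k}\cdot\tfrac{\sigma}{\sqrt{2t/k}} \cdot \tfrac1{\cancel{\ }} = \tfrac{3\sigma}{\sqrt{2}}\sqrt{t/k}$, makes the bound $d\,e^{-t}$ for each sign, hence $2d\,e^{-t}$ total. This yields $\|S\| \le \tfrac{3}{\sqrt2}\sigma\sqrt{t/k}$, and since $\tfrac1{\theta_\sigma}\cdot\theta_\sigma s = s$ we are done; note the left-hand side of the lemma is $\tfrac1{\theta_\sigma}\|{\theta_\sigma S'}\|$-type quantity but in fact the statement's left side is already $\|\tfrac1{\theta_\sigma}S\cdot\theta_\sigma\|=\|S\|$... wait — rereading, the displayed quantity is $\|\tfrac1{\theta_\sigma}\cdot\tfrac1{n!}\sum W\|$ and $W$ is built from $\psi(\cdot)$ not $\tfrac1{\theta_\sigma}\psi(\cdot)$, so the bound we want is on $\tfrac1{\theta_\sigma}\|S\|$; redoing Step 3 with $s$ rescaled by $\theta_\sigma$ gives exactly $\tfrac1{\theta_\sigma}\cdot\theta_\sigma s$ where now $s$ absorbs the factor, producing the claimed $\tfrac{3}{\sqrt2}\sigma\sqrt{t/k}$.

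\textbf{Main obstacle.} The only genuinely delicate point is the bookkeeping of constants and the correct substitution into Fact \ref{fact:legacy}: one must be careful that the ``$n$'' in that fact becomes $k$ (the number of blocks, not $n$), that $\theta$ there is $\theta_\sigma$, and that the rescaling inside $W$ (division by $k$ versus the exponent $\beta=k$) is matched so that $kW_\pi$ is exactly a sum of $k$ i.i.d.\ $\psi(\theta_\sigma Z^{(\ell)})$ terms. Everything else — convexity reduction, the two-sided eigenvalue split, the union bound — is routine given Facts \ref{fact:01}, \ref{fact:02}, \ref{fact:04}, \ref{fact:legacy}.
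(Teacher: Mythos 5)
Your proposal is correct and takes essentially the same route as the paper: apply Fact \ref{fact:02} to the convex combination over permutations, reduce to a single canonical block average of $k$ i.i.d.\ terms, invoke the first (and, for $-\lambda_{\mn}$, the second) inequality of Fact \ref{fact:legacy} with the free parameter set to $k$, and choose $s=\tfrac{3}{\sqrt 2}\sigma\sqrt{t/k}$ so that $k\theta_\sigma s - k\theta_\sigma^2\sigma^2 = 3t-2t = t$. The only difference from the paper's write-up is cosmetic (you bound $\Pr(\lambda_{\mx}(S)\ge \theta_\sigma s)$ while the paper works directly with $\tfrac1{\theta_\sigma}S$; these coincide after rescaling, as you noticed mid-proof), so aside from tidying the confused parenthetical at the end of Step 3, the argument is the same.
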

\noindent The proof is given in Section \ref{proof:lemma-EH}. 
Next, define the sequence 
\begin{align*}
\delta_0 & = 0, \\
\delta_j & = \l( \mathrm{r}_H\frac{26 t}{k} + \frac{1}{2}\r)\delta_{j-1} + 5.75 \sigma\sqrt{\frac{t}{k}}.
\end{align*}
If $\mathrm{r}_H\frac{26 t}{k}\leq \frac{1}{4}$, then $t\leq \frac{k}{104}$, hence 
$5.75 \sigma\sqrt{\frac{t}{k}} \leq \frac{1}{8} \frac{1}{\theta_\sigma}$ and 
\[
\delta_j \leq \frac{3}{4}\delta_{j-1} + \frac{1}{8} \frac{1}{\theta_\sigma} \leq \frac{1}{2} \frac{1}{\theta_\sigma} 
\]
for all $j\geq 0$.
Let $\m E_0$ be the event of probability $\geq 1 - (4d+1)e^{-t}$ on which the inequalities of Lemmas \ref{lemma:sup-bound} and \ref{lemma:EH} hold. 
It follows from \eqref{eq:b10}, Lemma \ref{lemma:sup-bound} and Lemma \ref{lemma:EH} that on the event $\m E_0$, for all $j\geq 1$
\begin{align*}
\Big\| U_n^{(j)} - \mb EH \Big\| &\leq L_n\l( \l\| U_n^{(j-1)} - \mb EH \r\|\r) 
+\Bigg\| \frac{1}{\theta_\sigma}\frac{1}{n!}\sum_{\pi_n} W_{i_1,\ldots,i_n}(\mb E H;\theta_\sigma) \Big)  \Bigg\| \\
&\leq  
\l( \mathrm{r}_H\frac{26 t}{k}+ \frac{1}{2}\r)\delta_{j-1} + \frac{3(1+2\sqrt{2})}{2} \sigma \sqrt{\frac{t}{k}}
\leq \delta_j
\end{align*}
given that $\mathrm{r}_H\frac{26 t}{k} \leq \frac{1}{4}$; we have also used the numerical bound $\frac{3(1+2\sqrt{2})}{2}\leq 5.75$. 

\noindent Finally, it is easy to see that for all $j\geq 1$ and 
$\gamma = \mathrm{r}_H\frac{26 t}{k}+\frac{1}{2}\leq \frac{3}{4}$,
\begin{align}
\label{eq:b30}
&
\delta_j  = \delta_0 \gamma^j + \sum_{l=0}^{j-1} \gamma^l \cdot 5.75\sigma\sqrt{\frac{t}{k}}
\leq \sum_{l\geq 0} (3/4)^{l} \cdot 5.75\sigma\sqrt{\frac{t}{k}} \leq 23\sigma\sqrt{\frac{t}{k}}.
\end{align}
Since $U_n^{(j)}\to \wh U_n^\star$ pointwise as $j\to\infty$, the result follows. 

\subsection{Proof of Lemma \ref{lemma:sup-bound}}
\label{proof:lemma-sup-bound}

Recall that $\sigma^2\geq \l\| \mb E(H_{i_1,\ldots,i_m} -\mb EH)^2 \r\|$, $\theta_\sigma:=\frac{1}{\sigma}\sqrt{\frac{2 t}{k}}$, and
\[
\psi(\theta_\sigma x)=\begin{cases}
\theta_\sigma x - \sign(x)\frac{\theta_\sigma^2 x^2}{2}, & x\in[-1/\theta_\sigma,1/\theta_\sigma], \\
1/2, & |x|>1/\theta_\sigma. 
\end{cases}
\]
The idea of the proof is to exploit the fact that $\psi(\theta_\sigma x)$ is ``almost linear'' whenever $x\in[-1/\theta_\sigma,1/\theta_\sigma]$, and its nonlinear part is active only for a small number of multi-indices $(i_1,\ldots,i_m)\in I_n^m$. 
Let 
\[
\chi_{i_1,\ldots,i_m} = I\l\{ \l\| H_{i_1,\ldots,i_m} - \mb EH \r\| \leq \frac{1}{2\theta_\sigma} \r\}.
\] 
Note that by Chebyshev's inequality, and taking into account the fact that 
\[
\| H_{i_1,\ldots,i_m} - \mb EH\|\leq \| H_{i_1,\ldots,i_m}-\mb EH \|_{\mathrm{F}},
\]
\begin{align}
\label{eq:a30}
\nonumber
\pr{\chi_{i_1,\ldots,i_m} = 0 }&\leq 4\theta_\sigma^2\mb E \l\| H_{i_1,\ldots,i_m} - \mb EH \r\|^2_\F  \\
&
\leq \frac{8t}{k}\frac{\tr \l( \mb E(H_{i_1,\ldots,i_m}-\mb EH)^2 \r)}{\l\| \mb E(H_{i_1,\ldots,i_m}-\mb EH)^2 \r\|} 
= \mathrm{r}_H \frac{8t}{k}.
\end{align}
Define the event 
\[
\m E=\l\{ \sum_{(i_1,\ldots,i_m)\in I_n^m} \Big( 1 - \chi_{i_1,\ldots,i_m} \Big) \leq \mathrm{r}_H\frac{8t }{k} \frac{n!}{(n-m)!}\cdot \l(1 + \sqrt{\frac{3}{8 \mathrm{r}_H}}\r) \r\}.
\] 
We will apply a version of Chernoff bound to the $\mb R$-valued U-statistic 
$\frac{(n-m)!}{n!}\sum_{(i_1,\ldots,i_m)\in I_n^m} \l( 1 - \chi_{i_1,\ldots,i_m} \r)$. 
A combination of Fact \ref{fact:04}, Fact \ref{fact:02} applied in the scalar case $d=1$, and Fact \ref{fact:03} implies that 
\[
\Pr\l( \frac{(n-m)!}{n!}\sum_{(i_1,\ldots,i_m)\in I_n^m} \l( 1 - \chi_{i_1,\ldots,i_m} \r) \geq \mathrm{r}_H\frac{8t }{k} \cdot \l(1 + \tau\r) \r) 
\leq e^{-\tau^2 8t \, \mathrm{r}_H/3 } 
\]
for $0<\tau<1$. Hence, choosing $\tau = \sqrt{ \frac{3}{8 \mathrm{r}_H}}$ implies that $\pr{\m E} \geq1- e^{-t}$. 

By triangle inequality, whenever $\chi_{i_1,\ldots,i_m} =1$ and 
$\delta\leq \frac{1}{2}\frac{1}{\theta_\sigma}$, it holds that $\l\|  H_{i_1,\ldots,i_m} - U \r\| \leq \frac{1}{\theta_\sigma}$ for any $U$ such that  $\| U - \mb EH \|\leq \delta$, 
and consequently
\[
\frac{1}{\theta_\sigma}\psi(\theta_\sigma (H_{i_1,\ldots,i_m} - U)) = (H_{i_1,\ldots,i_m} - U) - 
\frac{\theta_\sigma}{2}\sign \l( H_{i_1,\ldots,i_m} - U \r) \l( H_{i_1,\ldots,i_m} - U \r)^2.
\]
Denoting 
\[
S_{i_1,\ldots,i_m}(U) := \sign \l( H_{i_1,\ldots,i_m} - U \r) \l( H_{i_1,\ldots,i_m} - U \r)^2
\] 
for brevity, we deduce that 
\begin{multline*}
\frac{1}{\theta_\sigma}\frac{1}{n!}\sum_{\pi_n}\Big( W_{i_1,\ldots,i_n}(U;\theta_\sigma) 
-W_{i_1,\ldots,i_n}(\expect{H};\theta_\sigma) \Big)   - (\mb EH - U) 
 \\
= \frac{(n-m)!}{n!}\sum_{(i_1,\ldots,i_m)\in I_n^m} \l( \frac{\theta_\sigma}{2}S_{i_1,\ldots,i_m}(\mb EH) - \frac{\theta_\sigma}{2}S_{i_1,\ldots,i_m}(U) \r)\chi_{i_1,\ldots,i_m} \\
+\frac{1}{\theta_\sigma}\frac{(n-m)!}{n!} \sum_{(i_1,\ldots,i_m)\in I_n^m} \l( 1 - \chi_{i_1,\ldots,i_m} \r) 
\Big( Y_{i_1,\ldots,i_m}(U;\theta_\sigma) 
-Y_{i_1,\ldots,i_m}(\expect{H};\theta_\sigma) \Big)   \\
-\frac{(n-m)!}{n!} \sum_{(i_1,\ldots,i_m)\in I_n^m} \l( 1 - \chi_{i_1,\ldots,i_m} \r) \l( \mb EH - U \r).
\end{multline*}
We will separately control the terms on the right hand side of the equality above. 
First, note that on event $\m E$,
\begin{align}
\label{eq:a40}
&
\l\| \frac{(n-m)!}{n!} \sum_{(i_1,\ldots,i_m)\in I_n^m} \l( 1 - \chi_{i_1,\ldots,i_m} \r) \l( \mb EH - U \r)\r\| \leq 
 \mathrm{r}_H\frac{8t }{k}\cdot \l(1 + \sqrt{\frac{3}{8 \mathrm{r}_H }}\r)\delta \leq \mathrm{r}_H \frac{13 t}{k}\delta
\end{align}
since $\| \mb EH - U\|\leq \delta$. 
Next, recalling that $\psi(\cdot)$ is operator Lipschitz (by Fact \ref{fact:06}), wee see that for any $(i_1,\ldots,i_m)\in I_n^m$ 
\[
\frac{1}{\theta_\sigma}\Big\| Y_{i_1,\ldots,i_m}(U;\theta_\sigma) -Y_{i_1,\ldots,i_m}(\mb E{H};\theta_\sigma) \Big\| \leq 
\l\| \mb EH - U\r\| \leq \delta,  
\]
hence on event $\m E$, 
\begin{multline}
\label{eq:a50}
\frac{1}{\theta_\sigma}\frac{(n-m)!}{n!} \l\| \sum_{(i_1,\ldots,i_m)\in I_n^m} \l( 1 - \chi_{i_1,\ldots,i_m} \r) 
\Big( Y_{i_1,\ldots,i_m}(U;\theta_\sigma) 
-Y_{i_1,\ldots,i_m}(\mb E{H};\theta_\sigma) \Big) \r\|  \\
\leq 
\mathrm{r}_H \frac{8t }{k}\cdot \l(1 + \sqrt{\frac{3}{8 \mathrm{r}_H}}\r)\delta \leq \mathrm{r}_H\frac{13 t }{k}\delta.
\end{multline}
Finally, it remains to control the term 
\begin{align*}
&
\m Q(\delta):=\sup_{\|U-\mathbb{E}H\|\leq\delta} \l\| \frac{(n-m)!}{n!}\sum_{(i_1,\ldots,i_m)\in I_n^m} \l( \frac{\theta_\sigma}{2}S_{i_1,\ldots,i_m}(\mb EH) - \frac{\theta_\sigma}{2}S_{i_1,\ldots,i_m}(U) \r)\chi_{i_1,\ldots,i_m} \r\|.  
\end{align*}

\begin{lemma}
\label{lemma:supplement}
With probability $\geq 1 - 2de^{-t}$,
\[
\m Q(\delta) \leq \frac{3(1+\sqrt{2})}{2} \sigma \sqrt{\frac{t}{k}} + \frac{\delta}{2}.
\]
\end{lemma}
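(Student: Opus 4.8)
\textbf{Proof proposal for Lemma \ref{lemma:supplement}.}

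The plan is to bound $\m Q(\delta)$ by a symmetrization-plus-contraction argument, but the cleanest route exploits the elementary pointwise bound on the ``curvature'' term $S_{i_1,\ldots,i_m}$ and then applies the U-statistic version of the matrix Bernstein/Hoeffding machinery already assembled in the excerpt (Facts \ref{fact:04}, \ref{fact:02}, \ref{fact:legacy}). First I would fix $U$ with $\|U - \mb EH\|\le\delta$ and write
\[
S_{i_1,\ldots,i_m}(\mb EH) - S_{i_1,\ldots,i_m}(U) = \bigl(S_{i_1,\ldots,i_m}(\mb EH) - S_{i_1,\ldots,i_m}(U)\bigr),
\]
observing that on the event $\chi_{i_1,\ldots,i_m}=1$ both $H_{i_1,\ldots,i_m}-\mb EH$ and $H_{i_1,\ldots,i_m}-U$ have operator norm at most $1/\theta_\sigma$, so $\|S_{i_1,\ldots,i_m}(\cdot)\|\le 1/\theta_\sigma^2$. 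The map $A\mapsto \sign(A)A^2$ is Lipschitz on the operator-norm ball of radius $1/\theta_\sigma$ (it equals $A|A|$, whose derivative is controlled there), which gives a pointwise bound $\|S_{i_1,\ldots,i_m}(\mb EH)-S_{i_1,\ldots,i_m}(U)\|\lesssim \frac{1}{\theta_\sigma}\|U-\mb EH\|\le \delta/\theta_\sigma$; the factor $\theta_\sigma/2$ in front then already yields a deterministic bound of order $\delta$. To get the stated constant $\delta/2$ for the ``bias'' part and a genuine $\sigma\sqrt{t/k}$ fluctuation term, I would instead split $\frac{\theta_\sigma}{2}(S(\mb EH)-S(U))\chi = \frac{\theta_\sigma}{2}\mb E[(S(\mb EH)-S(U))\chi] + (\text{centered part})$, bound the expectation directly (its operator norm is at most $\frac{\theta_\sigma}{2}\cdot\frac{\text{something}}{\theta_\sigma}\|U-\mb EH\|$, and a careful constant-tracking using $\theta_\sigma\delta\le 1/2$ and $\theta_\sigma^2\sigma^2 = 2t/k$ produces $\le\delta/2$), and control the centered part by concentration.

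For the centered/fluctuation part, the key step is: the U-statistic $\frac{(n-m)!}{n!}\sum_{(i_1,\ldots,i_m)}\frac{\theta_\sigma}{2}\bigl[(S_{i_1,\ldots,i_m}(\mb EH)-S_{i_1,\ldots,i_m}(U))\chi_{i_1,\ldots,i_m} - \mb E[\cdots]\bigr]$ is, by Fact \ref{fact:04}, an average over permutations of sums of $k$ i.i.d. blocks, and its largest eigenvalue can be controlled via Fact \ref{fact:02} followed by a matrix Bernstein bound (the corollary \eqref{eq:p10}, applied with $M\lesssim \theta_\sigma/\theta_\sigma^2 = 1/\theta_\sigma$ and matrix variance proxy $\lesssim \theta_\sigma^2 \cdot \tfrac{1}{\theta_\sigma^2}\sigma^2 = \sigma^2$ after a suitable second-moment computation of $S$). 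Applying the same bound to $-(\cdots)$ controls $\lambda_{\min}$, giving the operator-norm bound. To pass from a fixed $U$ to the supremum over the ball $\{\|U-\mb EH\|\le\delta\}$, I would use that the integrand is Lipschitz in $U$ (again from the Lipschitz property of $A\mapsto\sign(A)A^2$ on the relevant ball) together with a standard $\epsilon$-net argument on the ball in $\mb H^d$; since the net cardinality is $(3\delta/\epsilon)^{\dim \mb H^d}$ this costs only an extra $d^2$ factor in the exponent, which is absorbed into the $t$-dependence, or — more in the spirit of the paper — one avoids nets entirely by noting the supremum is attained at an extreme point and re-running the block-concentration argument with the Lipschitz modulus built in. Finally, collect the two Bernstein tail events (each of probability $\ge 1-de^{-t}$) to get the claimed $1-2de^{-t}$, and track constants: the $\sqrt{t/k}$ term comes out with coefficient $\le \tfrac{3(1+\sqrt2)}{2}$ after using $\theta_\sigma^2\sigma^2=2t/k$ and the $(1+\sqrt2)$ bookkeeping that already appears in Lemma \ref{lemma:sup-bound}.

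The main obstacle, I expect, is the precise constant tracking rather than the structure: one must verify that $A\mapsto\sign(A)A^2 = A|A|$ is $C^1$ with operator-Lipschitz constant exactly $2/\theta_\sigma$ (or better) on the ball of radius $1/\theta_\sigma$, and that combining the curvature Lipschitz bound, the factor $\theta_\sigma/2$, and the identity $\theta_\sigma\delta\le\tfrac12$ gives a \emph{bias} contribution bounded by $\delta/2$ and not something larger; getting this clean requires computing $\mb E[S_{i_1,\ldots,i_m}(\mb EH)\chi]$ and $\mb E[S_{i_1,\ldots,i_m}(U)\chi]$ and exploiting that $\mb E[S(\mb EH)] = \mb E[\sign(H-\mb EH)(H-\mb EH)^2]$ is small because the truncation $\chi$ removes only a low-probability set (bounded via \eqref{eq:a30}) while the untruncated third-moment-type object is itself controlled by $\sigma^2/\theta_\sigma$. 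A secondary, more mechanical obstacle is ensuring the net argument does not inflate the constant in front of $\sigma\sqrt{t/k}$; absorbing the $\log(1/\epsilon)$ and $\dim$ factors cleanly into the already-present $t$ and $\mathrm{r}_H$ requires choosing $\epsilon$ proportional to $\delta$ and using the assumption $\mathrm{r}_H t/k\le 1/104$ to keep everything below the target thresholds.
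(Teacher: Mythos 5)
Your proposal has the right high-level skeleton (separate a bias term from a fluctuation term; apply matrix Bernstein via Facts~\ref{fact:04} and~\ref{fact:02}) but it does \emph{not} match the paper's argument, and it contains a gap that would prevent the stated constants and probability from coming out.

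The key issue is how you pass from a fixed $U$ to the supremum over the ball $\{\|U-\mb EH\|\le\delta\}$. You propose an $\varepsilon$-net in $\mb H^d$, which has cardinality exponential in $\dim\mb H^d = d(d+1)/2$. After a union bound this puts a factor of order $e^{cd^2}$ in front of the tail probability, and there is no way to absorb it into the claimed $2d e^{-t}$: the constraint $\mathrm{r}_H t/k\le 1/104$ controls the number of truncated indices via the Chernoff bound, not the probability in the matrix Bernstein step, and the theorem is meant to work for $t$ much smaller than $d^2$. Your one-line aside about ``avoiding nets by noting the supremum is attained at an extreme point'' is not a workable substitute. The paper instead kills the $U$-dependence \emph{deterministically} before any probability enters, via the PSD sandwich $-A^2 \preceq \sign(A)A^2 \preceq A^2$ followed by $(H-U)^2\preceq 2(H-\mb EH)^2 + 2(U-\mb EH)^2$; this reduces $\m Q(\delta)$ to a single $U$-free random term $3\frac{(n-m)!}{n!}\big\|\sum \frac{\theta_\sigma}{2}(H-\mb EH)^2\chi\big\|$ plus the deterministic $\theta_\sigma\delta^2\le\delta/2$. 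After that, only one matrix Bernstein application is needed, with no supremum in sight.

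Two secondary issues. First, you lean on an operator-Lipschitz bound for $A\mapsto\sign(A)A^2$ on an operator-norm ball; this is not automatic (scalar Lipschitzness does not transfer to the operator norm, as the paper itself has to invoke a nontrivial criterion to get the operator-Lipschitz property of $\psi$), so this step would need its own justification. Second, your accounting of where the two terms in the bound come from is inverted relative to the paper: the $\delta/2$ is produced by the purely deterministic $\theta_\sigma\delta^2$ contribution (using $\theta_\sigma\delta\le 1/2$), while \emph{both} the bias $\frac{3\theta_\sigma}{2}\|\mb E[(H-\mb EH)^2\chi]\|\le\frac{3\sigma}{2}\sqrt{2t/k}$ and the centered Bernstein term $\frac{3}{2}\sigma\sqrt{t/k}$ go into the coefficient $\frac{3(1+\sqrt2)}{2}$. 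Your description of getting the $\delta/2$ from a ``bias'' expectation and then separately running concentration would not reproduce the stated constants.
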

\begin{proof}
Observe that for all $U\in \mb H^d$ and $(i_1,\ldots,i_m)\in I_n^m$,
\begin{align*}
& 
- \l( H_{i_1,\ldots,i_m} - U \r)^2\preceq \sign \l( H_{i_1,\ldots,i_m} - U \r) \l( H_{i_1,\ldots,i_m} - U \r)^2 
\preceq \l( H_{i_1,\ldots,i_m} - U \r)^2,
\end{align*}
hence
\begin{multline*}
\Bigg\|\frac{(n-m)!}{n!}\sum_{(i_1,\ldots,i_m)\in I_n^m} \l( \frac{\theta_\sigma}{2}S_{i_1,\ldots,i_m}(\mb EH) - \frac{\theta_\sigma}{2}S_{i_1,\ldots,i_m}(U) \r)\chi_{i_1,\ldots,i_m} \Bigg\| \\
\leq 
\frac{(n-m)!}{n!}  \Bigg\| \sum_{(i_1,\ldots,i_m)\in I_n^m}   \frac{\theta_\sigma}{2} \l( H_{i_1,\ldots,i_m} - U \r)^2 \chi_{i_1,\ldots,i_m} \Bigg\|  \\
+ \frac{(n-m)!}{n!}  \Bigg\| \sum_{(i_1,\ldots,i_m)\in I_n^m}   \frac{\theta_\sigma}{2} \l( H_{i_1,\ldots,i_m} - \mb EH \r)^2 \chi_{i_1,\ldots,i_m} \Bigg\| .
\end{multline*}
Moreover, 
\begin{align*}
&
 \l( H_{i_1,\ldots,i_m} - U \r)^2 \preceq 2\l( H_{i_1,\ldots,i_m} - \mb EH \r)^2 + 2\l( U - \mb EH\r)^2, 
\end{align*}
implying that 
\begin{multline*}
\frac{(n-m)!}{n!}  \Bigg\| \sum_{(i_1,\ldots,i_m)\in I_n^m}   \frac{\theta_\sigma}{2} \l( H_{i_1,\ldots,i_m} - U \r)^2 \chi_{i_1,\ldots,i_m} \Bigg\| \\
\leq 
2\frac{(n-m)!}{n!}  \Bigg\| \sum_{(i_1,\ldots,i_m)\in I_n^m}   \frac{\theta_\sigma}{2} \l( H_{i_1,\ldots,i_m} - \mb EH \r)^2 \chi_{i_1,\ldots,i_m} \Bigg\| + \theta_\sigma\Big\| U - \mb EH \Big\|^2.
\end{multline*}
Hence, we have shown that 
\begin{align}
\label{eq:a51}
&
\m Q(\delta) \leq 3\frac{(n-m)!}{n!}  \Bigg\| \sum_{(i_1,\ldots,i_m)\in I_n^m}   \frac{\theta_\sigma}{2} \l( H_{i_1,\ldots,i_m} - \mb EH \r)^2 \chi_{i_1,\ldots,i_m} \Bigg\| + \theta_\sigma \delta^2.
\end{align}
Since $\delta\leq \frac{1}{2\theta_\sigma}$,
\begin{align}
\label{eq:a52}
&
\theta_\sigma \delta^2 \leq \frac{\delta}{2}.
\end{align}
Next, we will estimate the first term in \eqref{eq:a51} as follows: 
\begin{multline*}
3\frac{(n-m)!}{n!}  \Bigg\| \sum_{(i_1,\ldots,i_m)\in I_n^m}   \frac{\theta_\sigma}{2} \l( H_{i_1,\ldots,i_m} - \mb EH \r)^2 \chi_{i_1,\ldots,i_m} \Bigg\| \\
\leq 
3\frac{(n-m)!}{n!}  \Bigg\| \sum_{(i_1,\ldots,i_m)\in I_n^m}   
\frac{\theta_\sigma}{2} \bigg[ \l( H_{i_1,\ldots,i_m} - \mb EH \r)^2 \chi_{i_1,\ldots,i_m}  - 
\mb E \l[ \l( H_{i_1,\ldots,i_m} - \mb E H \r)^2 
\chi_{i_1,\ldots,i_m} \r] \bigg] \Bigg\| \\
+\frac{3\theta_\sigma}{2} \Big\| \mb E\l[ \l( H_{i_1,\ldots,i_m} - \mb EH \r)^2 \chi_{i_1,\ldots,i_m}\r]  \Big\|.
\end{multline*}
Clearly, $\Big\| \mb E \l[ \l( H_{i_1,\ldots,i_m} - \mb EH \r)^2\chi_{i_1,\ldots,i_m} \r] \Big\|\leq \sigma^2$, hence
\begin{align}
\label{eq:a53}
\frac{3\theta_\sigma}{2} \Big\| \mb E \l[ \l( H_{i_1,\ldots,i_m} - \mb EH \r)^2\chi_{i_1,\ldots,i_m} \r] \Big\| \leq \frac{3\sigma}{2}\sqrt{\frac{2t}{k}}.
\end{align}
The remaining part will be estimated using the Matrix Bernstein's inequality (Fact \ref{fact:bernstein}).
	
\noindent To this end, note that by the definition of $\chi_{i_1,\ldots,i_m}$,
\[
\Big\| \l( H_{i_1,\ldots,i_m} - \mb EH \r)^2 \chi_{i_1,\ldots,i_m}  - \mb E \l[ \l( H_{i_1,\ldots,i_m} - \mb EH \r)^2 \chi_{i_1,\ldots,i_m} \r]\Big\|
\leq 
\l( \frac{1}{2\theta_\sigma}\r)^2
\]
almost surely. 
Moreover, 
\begin{multline*}
\Big\| \mb E\l( \l( H_{i_1,\ldots,i_m} - \mb EH \r)^2 \chi_{i_1,\ldots,i_m}  - \mb E \l[ \l( H_{i_1,\ldots,i_m} - \mb EH \r)^2 \chi_{i_1,\ldots,i_m} \r] \r)^2\Big\| \\
\leq 
\Big\| \mb E\l( \l( H_{i_1,\ldots,i_m} - \mb EH \r)^2 \chi_{i_1,\ldots,i_m} \r)^2\Big\|
\leq \l( \frac{1}{2\theta_\sigma}\r)^2 \, \big\| \mb E\l( H_{i_1,\ldots,i_m} - \mb EH \r)^2\big\|,
\end{multline*}
where we used the fact that 
\begin{align*}
&
\l( \l( H_{i_1,\ldots,i_m} - \mb EH \r)^2 \chi_{i_1,\ldots,i_m} \r)^2 \preceq 
\l( \frac{1}{2\theta_\sigma}\r)^2  \l( H_{i_1,\ldots,i_m} - \mb EH \r)^2.
\end{align*}
Applying the Matrix Bernstein inequality (Fact \ref{fact:bernstein}), we get that with probability $\geq 1 - 2d e^{-t}$
\begin{multline}
\label{eq:a60}
3\frac{(n-m)!}{n!}  \Bigg\| \sum_{(i_1,\ldots,i_m)\in I_n^m}   
\frac{\theta_\sigma}{2} \l[ \l( H_{i_1,\ldots,i_m} - \mb EH \r)^2 \chi_{i_1,\ldots,i_m}  - \mb E \l( H_{i_1,\ldots,i_m} - \mb EH \r)^2 
\chi_{i_1,\ldots,i_m} \r] \Bigg\| \\
\leq 
\frac{3\theta_\sigma}{2}\l[ \frac{2}{2\theta_\sigma} \l\| \mb E(H_{i_1,\ldots,i_m} - \mb EH)^2 \r\|^{1/2} \sqrt{\frac{t}{k}} \bigvee \frac{4}{3}\frac{t}{k}\frac{1}{(2\theta_\sigma)^2}\r] 
\leq \frac{3}{2}\sigma \sqrt{\frac{t}{k}}.
\end{multline}
The bound of Lemma \ref{lemma:supplement} now follows from the combination of bounds \eqref{eq:a52}, \eqref{eq:a53}, \eqref{eq:a60} and \eqref{eq:a51}.
\qed

\noindent Combining the bound of Lemma \ref{lemma:supplement} with \eqref{eq:a40} and \eqref{eq:a50}, 
we get the desired result of Lemma \ref{lemma:sup-bound}.
	
\end{proof}

\subsection{Proof of Lemma \ref{lemma:EH}}
\label{proof:lemma-EH}

Fact \ref{fact:02} implies that for all $s>0$,
\begin{align}
\label{eq:e10}
\Pr\l( \lambda_{\mx}\l( \frac{1}{\theta_\sigma}\frac{1}{n!}\sum_{\pi_n} W_{i_1,\ldots,i_n}(\mb E H;\theta_\sigma) \Big)  \r)\geq 
s \r) 
& \nonumber
\leq 
\inf_{\theta>0} \l[ e^{-\theta s} \mb E\tr e^{(\theta/\theta_\sigma)\,W_{1,\ldots,n}(\mb EH,\theta_\sigma)}\r] \\
& \leq e^{-\theta_\sigma s \, k} \,\mb E\tr e^{k\,W_{1,\ldots,n}(\mb EH,\theta_\sigma)}.
\end{align}
Since 
\[
W_{1,\ldots,n}(\mb EH,\theta_\sigma) = 
\frac{1}{k}\l( \psi\l( \theta_\sigma( H_{1,\ldots,m} -\mb EH)\r) +  \ldots + \psi\l( \theta_\sigma( H_{(k-1)m+1,\ldots,km} -\mb EH)\r) \r)
\]
is a sum of $k$ independent random matrices, we can apply the first inequality of Fact \ref{fact:legacy} to deduce that 
\[
\mb E\tr e^{k\,W_{1,\ldots,n}(\mb EH,\theta_\sigma)} \leq
\tr \exp\l( k\theta_\sigma^2 \mb E(H - \mb EH)^2 \r) \leq d \exp\l( k\theta_\sigma^2 \sigma^2\r),
\]
where we used the fact that $\tr(A)\leq d \|A\|$ for $\mb H^{d\times d} \ni A\succeq 0$. 
Finally, setting $s=\frac{3}{\sqrt{2}}\sigma\sqrt{\frac{t}{k}}$, we obtain from \eqref{eq:e10} that
\[
\Pr\l( \lambda_{\mx}\l( \frac{1}{\theta_\sigma}\frac{1}{n!}\sum_{\pi_n} W_{i_1,\ldots,i_n}(\mb E H;\theta_\sigma) \Big)  \r)\geq 
s \r) 
\leq d e^{-t}.
\]
Similarly, since $-\lambda_{\mn}(A)=\lambda_{\mx}(-A)$ for $A\in \mb H^{d\times d}$, it follows from the second inequality of Fact \ref{fact:legacy} that 
\begin{align*}
\Pr\Bigg( \lambda_{\mn} & \l(\frac{1}{\theta_\sigma}\frac{1}{n!}\sum_{\pi_n} W_{i_1,\ldots,i_n}(\mb E H;\theta_\sigma) \Big)  \r) \leq -s\Bigg) \\
& = \Pr\l( \lambda_{\mx}\l( -\frac{1}{\theta_\sigma}\frac{1}{n!}\sum_{\pi_n} W_{i_1,\ldots,i_n}(\mb E H;\theta_\sigma) \Big)  \r) \geq s\r) \\
&
\leq e^{-\theta_\sigma s\,k}\,\mb E \tr \exp\l(k W_{1,\ldots,n}(\mb EH,\theta_\sigma) \r)\\
&
\leq d e^{-\theta_\sigma s\,k}\, \exp\l( k\theta_\sigma^2 \sigma^2\r)
\leq d e^{-t}
\end{align*}
for $s=\frac{3}{\sqrt{2}}\sigma\sqrt{\frac{t}{k}}$, and result follows.

\subsection{Proof of Lemma \ref{lemma:grad-descent}}
\label{proof:grad-descent}

Part (a) follows from a well-known result (e.g., \cite{bertsekas2009convex}) which states that, given a convex, differentiable function $G: \mb R^{D}\to \mb R$ such that its gradient satisfies 
$\Big\| \nabla G(U_1) - \nabla G(U_2) \Big\|_2 \leq L \| U_1 - U_2 \|_2$, the $j$-th iteration $U^{(j)}$ of the gradient descent algorithm run 
with step size $\alpha\leq \frac{1}{L}$ satisfies 
\[
G\l( U^{(j)} \r) - G(U_\ast) \leq \frac{\l\| U^{(0)} - U_\ast\r\|_2^2}{2\alpha j},
\]
where $U_\ast = \argmin \, G(U)$. 
\\
The proof of part (b) follows the lines of the proof of Theorem \ref{thm:new-performance}: more specifically, the claim follows from equation \eqref{eq:b30}.
\qed

\subsection{Proof of Corollary \ref{cor:rectangular}}
\label{proof:rectangular}

\begin{proof}
Note that 
\[
\big\| \mb E\,\m D(H_{i_1\ldots i_m})^2 \big\| = \max\l( \big\|  \mb E H_{i_1\ldots i_m} \,H_{i_1\ldots i_m}^\ast \big\|, \big\|  \mb E H_{i_1\ldots i_m}^\ast \, H_{i_1\ldots i_m} \big\| \r).
\] 
We apply Theorem \ref{thm:new-performance} applied to self-adjoint random matrices 
\[
\m D(H_{i_1\ldots i_m})\in \mb C^{(d_1+d_2)\times (d_1+d_2)}, \ (i_1,\ldots,i_m)\in I_n^m,
\] 
and obtain that 
\[
\big\| \bar U_n^\star - \m D(\mb E H) \big\| \leq 15\sigma \sqrt{\frac{t}{k}}
\]
with probability $\geq 1 - \l( 2(d_1+d_2)+1 \r)e^{-t}$. 
It remains to apply Fact \ref{fact:dilation}: 
\begin{align*}
\l\| \bar U_n^\star - \m D(\mb E H)\r\| & =
\l\| \begin{pmatrix}
\hat U_{11}^\star & \hat U_{12}^\star - \mb E H \\
(\hat U^\star_{12})^\ast - \mb E H^\ast & \hat U_{22}^\star
\end{pmatrix} \r\|  \\
&\geq 
\l\|  \begin{pmatrix}
0 & \hat U_{12}^\star - \mb E H \\
(\hat U^\star_{12})^\ast - \mb E H^\ast & 0
\end{pmatrix} \r\| = 
\l\|  \hat U_{12}^\star - \mb E H \r\|,
\end{align*}
and the claim follows.

\subsection{Proof of Lemma \ref{lemma:variance}}
\label{proof:variance}

Recall that $\mu=\mb EY$. 

\noindent (a) Observe that 
\begin{align*}
\big\| \mb E\l(  (Y-\mu)(Y-\mu)^T \r)^2 \big\| &= 
\sup_{\|v\|_2=1} \mb E \l\langle v, Y-\mu \r\rangle^2 \l\| Y - \mu\r\|_2^2 \\
&
=\sup_{\|v\|_2=1}\l[ \sum_{j=1}^d \langle v, Y-\mu\rangle^2 (Y^{(j)} - \mu^{(j)})^2 \r].
\end{align*}
Next, for $j=1,\ldots,d$,
\begin{align*}
\mb E\langle v, Y-\mu\rangle^2 (Y^{(j)} - \mu^{(j)})^2 & \leq 
\mb E^{1/2} \langle v, Y-\mu\rangle^4 \, \mb E^{1/2}(Y^{(j)} - \mu^{(j)})^4  \\
&
\leq K \mb E \langle v, Y-\mu\rangle^2 \, \mb E (Y^{(j)} - \mu^{(j)})^2,
\end{align*}
hence 
\begin{align*}
\big\| \mb E\l(  (Y-\mu)(Y-\mu)^T \r)^2 \big\| \leq 
K\sup_{\|v\|_2=1}\mb E \langle v, Y-\mu\rangle^2 \sum_{j=1}^d \mb E (Y^{(j)} - \mu^{(j)})^2,
\end{align*}
and the result follows. 

\noindent (b) 
Note that
\begin{align*}
\tr\l[ \mb E\l(  (Y-\mu)(Y-\mu)^T \r)^2 \r] & =
\sum_{j=1}^d \mb E (Y^{(j)} - \mu^{(j)})^2 \l\| Y - \mu\r\|_2^2 \\
&
=\sum_{j=1}^d \mb E (Y^{(j)} - \mu^{(j)})^4 + \sum_{i\ne j} \mb E\l[ (Y^{(i)} - \mu^{(i)})^2 (Y^{(j)} - \mu^{(j)})^2 \r] \\
&
\leq \sum_{j=1}^d \mb E (Y^{(j)} - \mu^{(j)})^4 +  \sum_{i\ne j} \mb E^{1/2}(Y^{(i)} - \mu^{(i)})^4 \mb E^{1/2} (Y^{(j)} - \mu^{(i)})^4 \\
&
= \l( \sum_{j=1}^d \mb E^{1/2} (Y^{(j)} - \mu^{(j)})^4 \r)^2 
\leq K'\l( \sum_{j=1}^d  \mb E (Y^{(j)} - \mu^{(j)})^2\r)^2 \\
&
= K' \l( \tr(\Sigma) \r)^2.
\end{align*}

\noindent (c) The inequality follows from Corollary 5.1 in \cite{wei2017estimation}. 

\qed

\subsection{Proof of Corollary \ref{cor:frob}}
\label{proof:frob}

It is easy to see ((e.g., see the proof of Theorem 1 in \cite{lounici2014high}) that $\widetilde \Sigma_\star^\tau$ can be equivalently represented as 
\begin{align}
&
\widetilde \Sigma_\star^\tau=\argmin_{S\in \mb R^{d\times d}, S=S^T}\l[  \l\| S - \widetilde \Sigma_\star \r\|^2_{\mathrm{F}} +\tau \l\| S \r\|_1\r].
\end{align}
The remaining proof is based on the following lemma:
\begin{lemma}
Inequality (\ref{eq:ex70}) holds on the event $\m E=\l\{ \tau\geq 2\l\| \widetilde \Sigma_\star^\tau - \Sigma \r\| \r\}$. 
\end{lemma}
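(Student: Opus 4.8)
The plan is to prove the lemma via the standard oracle inequality argument for nuclear-norm penalized estimators, combined with the observation from Corollary \ref{corollary:cov} that on the event of interest the penalty parameter $\tau$ dominates twice the operator-norm error of $\widetilde\Sigma_\star$. Fix an arbitrary symmetric $S\in\mb R^{d\times d}$ and write $r=\rank(S)$. Since $\widetilde\Sigma_\star^\tau$ minimizes $\|\,\cdot\,-\widetilde\Sigma_\star\|_{\mathrm F}^2 + \tau\|\cdot\|_1$, comparing its value at $\widetilde\Sigma_\star^\tau$ and at $S$ gives
\[
\l\| \widetilde\Sigma_\star^\tau - \widetilde\Sigma_\star \r\|_{\mathrm F}^2 + \tau \l\| \widetilde\Sigma_\star^\tau \r\|_1 \leq \l\| S - \widetilde\Sigma_\star \r\|_{\mathrm F}^2 + \tau \l\| S \r\|_1.
\]
Expanding both squared Frobenius norms around $\Sigma$ and cancelling, I would rearrange this into
\[
\l\| \widetilde\Sigma_\star^\tau - \Sigma \r\|_{\mathrm F}^2 \leq \l\| S - \Sigma \r\|_{\mathrm F}^2 + 2\dotp{\widetilde\Sigma_\star - \Sigma}{\widetilde\Sigma_\star^\tau - S} + \tau\l(\l\| S\r\|_1 - \l\| \widetilde\Sigma_\star^\tau\r\|_1\r).
\]

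Next I would bound the inner-product term by duality of the nuclear and operator norms: $\dotp{\widetilde\Sigma_\star-\Sigma}{\widetilde\Sigma_\star^\tau-S}\leq \|\widetilde\Sigma_\star-\Sigma\|\cdot\|\widetilde\Sigma_\star^\tau-S\|_1\leq \frac{\tau}{2}\|\widetilde\Sigma_\star^\tau - S\|_1$, where the last step uses precisely the defining condition of the event $\m E$. Substituting, the penalty and inner-product contributions collapse to $\tau\l(\|\widetilde\Sigma_\star^\tau - S\|_1 + \|S\|_1 - \|\widetilde\Sigma_\star^\tau\|_1\r)$. Now I would invoke the standard decomposition of the nuclear norm relative to the low-rank matrix $S$: writing $P$ and $P^\perp$ for the projections onto the column/row space of $S$ and its orthogonal complement, one has $\|\widetilde\Sigma_\star^\tau\|_1 \geq \|P^\perp(\widetilde\Sigma_\star^\tau - S)P^\perp\|_1 + \|S\|_1 - \|P(\widetilde\Sigma_\star^\tau - S)\|_1 - \|P^\perp(\widetilde\Sigma_\star^\tau-S)P\|_1$ (and symmetrically), while $\|\widetilde\Sigma_\star^\tau - S\|_1$ is bounded above by the sum of the "on-support" and "off-support" pieces; after the cancellation only the on-support part survives with a factor $2$, so the whole bracket is at most $2\|P(\widetilde\Sigma_\star^\tau-S)\|_1 + 2\|P^\perp(\widetilde\Sigma_\star^\tau-S)P\|_1$, which in turn is at most $2\sqrt{2}\sqrt{\rank(S)}\,\|\widetilde\Sigma_\star^\tau - S\|_{\mathrm F}$ since each of these pieces has rank at most $\rank(S)$ (for the first) and the two pieces together have rank at most $2\rank(S)$.

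Putting the pieces together yields $\|\widetilde\Sigma_\star^\tau-\Sigma\|_{\mathrm F}^2 \leq \|S-\Sigma\|_{\mathrm F}^2 + 2\sqrt{2}\,\tau\sqrt{\rank(S)}\,\|\widetilde\Sigma_\star^\tau-S\|_{\mathrm F}$, and then I would apply the triangle inequality $\|\widetilde\Sigma_\star^\tau - S\|_{\mathrm F}\leq \|\widetilde\Sigma_\star^\tau-\Sigma\|_{\mathrm F} + \|\Sigma - S\|_{\mathrm F}$ followed by the elementary inequality $2ab\leq a^2/c + cb^2$ (with a carefully chosen constant $c$) to absorb the $\|\widetilde\Sigma_\star^\tau-\Sigma\|_{\mathrm F}$ term into the left-hand side. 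Solving the resulting quadratic and tracking constants should produce exactly the factor $\frac{(1+\sqrt 2)^2}{8}$ in front of $\tau^2\rank(S)$. Taking the infimum over all symmetric $S$ completes the argument. The main obstacle is bookkeeping: getting the nuclear-norm decomposition and the subsequent constant-chasing to land precisely on $(1+\sqrt2)^2/8$ rather than a looser constant requires choosing the split in the $2ab$ step optimally and being careful that the "off-support" rank-$2r$ term is handled with the right $\sqrt 2$; the probabilistic content is entirely inherited from Corollary \ref{corollary:cov} (with the adaptive bound of Theorem \ref{th:lepski}), which guarantees $\tau \geq 2\|\widetilde\Sigma_\star-\Sigma\|$ on an event of probability $\geq 1-(4d+1)e^{-t}$, so no new concentration estimates are needed.
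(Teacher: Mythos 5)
The high-level plan — mimic Lounici's oracle-inequality argument, use nuclear-operator duality plus the event $\tau \ge 2\|\widetilde\Sigma_\star - \Sigma\|$, and decompose the nuclear norm along the support of $S$ — is the same route the paper takes (the paper simply defers to Theorem~1 of Lounici~(2014)). You also correctly read the event $\m E$ as a condition on $\|\widetilde\Sigma_\star-\Sigma\|$ rather than on $\|\widetilde\Sigma_\star^\tau-\Sigma\|$; the displayed superscript is almost certainly a typo in the paper. However, there is a genuine gap at the end: your absorption step cannot produce an oracle inequality with leading coefficient $1$ on $\|S-\Sigma\|_{\mathrm F}^2$.

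Concretely, your starting point is the \emph{weak} comparison inequality $\|\widetilde\Sigma_\star^\tau - \widetilde\Sigma_\star\|_{\mathrm F}^2 + \tau\|\widetilde\Sigma_\star^\tau\|_1 \le \|S-\widetilde\Sigma_\star\|_{\mathrm F}^2 + \tau\|S\|_1$. After your duality and nuclear-norm decomposition steps, this leads to
\[
x^2 \le a^2 + b\,(x+a), \qquad x=\|\widetilde\Sigma_\star^\tau-\Sigma\|_{\mathrm F},\quad a=\|S-\Sigma\|_{\mathrm F},\quad b \asymp \tau\sqrt{\rank(S)}.
\]
This factors as $(x-a)(x+a)\le b(x+a)$, hence $x\le a+b$ and $x^2\le a^2+2ab+b^2$. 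The cross term $2ab$ grows linearly in $a$, so no choice of $c$ in $2ab\le a^2/c + cb^2$ can bound $x^2$ by $a^2 + C\,\tau^2\rank(S)$ for a constant $C$: any attempt to absorb the cross term necessarily inflates the coefficient of $a^2$ strictly above $1$. Since inequality \eqref{eq:ex70} has coefficient exactly $1$ on $\|S-\Sigma\|_{\mathrm F}^2$, your argument as written does not prove it; no amount of bookkeeping with the constant $c$ fixes this.

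The missing ingredient is the strong-convexity form of the comparison. Because the map $S\mapsto\|S-\widetilde\Sigma_\star\|_{\mathrm F}^2+\tau\|S\|_1$ is $2$-strongly convex, one has
\[
\|\widetilde\Sigma_\star^\tau - \widetilde\Sigma_\star\|_{\mathrm F}^2 + \tau\|\widetilde\Sigma_\star^\tau\|_1 + \|\widetilde\Sigma_\star^\tau - S\|_{\mathrm F}^2 \le \|S-\widetilde\Sigma_\star\|_{\mathrm F}^2 + \tau\|S\|_1,
\]
or, equivalently, one should work directly from the first-order optimality condition $2(\widetilde\Sigma_\star^\tau - \widetilde\Sigma_\star) + \tau\widehat V = 0$ together with monotonicity of the subdifferential of $\|\cdot\|_1$, as Lounici does. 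The extra $-\|\widetilde\Sigma_\star^\tau - S\|_{\mathrm F}^2$ (which dominates $-\|\Delta_1\|_{\mathrm F}^2$ for the on-support part $\Delta_1$) is precisely what allows the linear-in-$\|\Delta_1\|_{\mathrm F}$ remainder to be completed as a square, yielding a pure $\tau^2\rank(S)$ term with no cross term in $\|S-\Sigma\|_{\mathrm F}$. Incidentally, the particular constant $\tfrac{(1+\sqrt2)^2}{8}$ also requires distinguishing the Frobenius norm of the sign part of a subgradient of $\|S\|_1$ (which is $\sqrt{\rank(S)}$) from the Cauchy--Schwarz bound $\|\Delta_1\|_1\le\sqrt{2\rank(S)}\,\|\Delta_1\|_{\mathrm F}$, producing the factor $(1+\sqrt2)$ rather than the $2\sqrt2$ that appears uniformly in your outline.
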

\noindent To verify this statement, it is enough to repeat the steps of the proof of Theorem 1 in \cite{lounici2014high}, replacing each occurrence of the sample covariance $\hat S_{2n}$ by its robust counterpart $\widetilde \Sigma_\star^\tau$. \\
Result of Corollary \ref{cor:frob} then follows from the combination of Theorem \ref{th:lepski} and Lemma \ref{lemma:variance} which imply that 
\[
\Pr(\m E)\geq 1-(4d+1)e^{-t}
\] 
whenever $\tau \geq \gamma\cdot 138\sqrt{K}\, \|\Sigma\| \,\sqrt{\frac{\mathrm{r}(\Sigma)(t+\Xi)}{\lfloor n/2\rfloor}}$.
\end{proof}

\section*{Acknowledgements}

Authors gratefully acknowledge support by the National Science Foundation grant DMS-1712956.

\bibliographystyle{imsart-nameyear}	
\bibliography{u-stat}

\begin{thebibliography}{41}

\bibitem[\protect\citeauthoryear{Aleksandrov and
  Peller}{2016}]{aleksandrov2016operator}
\begin{barticle}[author]
\bauthor{\bsnm{Aleksandrov},~\bfnm{Alexei~Borisovich}\binits{A.~B.}} \AND
  \bauthor{\bsnm{Peller},~\bfnm{Vladimir~Vsevolodovich}\binits{V.~V.}}
(\byear{2016}).
\btitle{Operator Lipschitz functions}.
\bjournal{Russian Mathematical Surveys}
\bvolume{71}
\bpages{605}.
\end{barticle}
\endbibitem

\bibitem[\protect\citeauthoryear{Alon, Matias and
  Szegedy}{1996}]{alon1996space}
\begin{binproceedings}[author]
\bauthor{\bsnm{Alon},~\bfnm{N.}\binits{N.}},
  \bauthor{\bsnm{Matias},~\bfnm{Y.}\binits{Y.}} \AND
  \bauthor{\bsnm{Szegedy},~\bfnm{M.}\binits{M.}}
(\byear{1996}).
\btitle{The space complexity of approximating the frequency moments}.
In \bbooktitle{Proceedings of the twenty-eighth annual ACM symposium on Theory
  of computing}
\bpages{20--29}.
\bpublisher{ACM}.
\end{binproceedings}
\endbibitem

\bibitem[\protect\citeauthoryear{Angluin and Valiant}{1979}]{angluin1979fast}
\begin{barticle}[author]
\bauthor{\bsnm{Angluin},~\bfnm{Dana}\binits{D.}} \AND
  \bauthor{\bsnm{Valiant},~\bfnm{Leslie~G}\binits{L.~G.}}
(\byear{1979}).
\btitle{Fast probabilistic algorithms for {H}amiltonian circuits and
  matchings}.
\bjournal{Journal of Computer and system Sciences}
\bvolume{18}
\bpages{155--193}.
\end{barticle}
\endbibitem

\bibitem[\protect\citeauthoryear{Arcones and Gine}{1993}]{arcones1993limit}
\begin{barticle}[author]
\bauthor{\bsnm{Arcones},~\bfnm{Miguel~A}\binits{M.~A.}} \AND
  \bauthor{\bsnm{Gine},~\bfnm{Evarist}\binits{E.}}
(\byear{1993}).
\btitle{Limit theorems for $U$-processes}.
\bjournal{The Annals of Probability}
\bpages{1494--1542}.
\end{barticle}
\endbibitem

\bibitem[\protect\citeauthoryear{Bertsekas}{2009}]{bertsekas2009convex}
\begin{bbook}[author]
\bauthor{\bsnm{Bertsekas},~\bfnm{Dimitri~P}\binits{D.~P.}}
(\byear{2009}).
\btitle{Convex optimization theory}.
\bpublisher{Athena Scientific Belmont}.
\end{bbook}
\endbibitem

\bibitem[\protect\citeauthoryear{Bhatia}{2013}]{bhatia2013matrix}
\begin{bbook}[author]
\bauthor{\bsnm{Bhatia},~\bfnm{Rajendra}\binits{R.}}
(\byear{2013}).
\btitle{Matrix analysis}
\bvolume{169}.
\bpublisher{Springer Science \& Business Media}.
\end{bbook}
\endbibitem

\bibitem[\protect\citeauthoryear{Cand{\`e}s et~al.}{2011}]{candes2011robust}
\begin{barticle}[author]
\bauthor{\bsnm{Cand{\`e}s},~\bfnm{E.~J.}\binits{E.~J.}},
  \bauthor{\bsnm{Li},~\bfnm{X.}\binits{X.}},
  \bauthor{\bsnm{Ma},~\bfnm{Y.}\binits{Y.}} \AND
  \bauthor{\bsnm{Wright},~\bfnm{J.}\binits{J.}}
(\byear{2011}).
\btitle{Robust principal component analysis?}
\bjournal{Journal of the ACM (JACM)}
\bvolume{58}
\bpages{11}.
\end{barticle}
\endbibitem

\bibitem[\protect\citeauthoryear{Carlen}{2010}]{carlen2010trace}
\begin{barticle}[author]
\bauthor{\bsnm{Carlen},~\bfnm{Eric}\binits{E.}}
(\byear{2010}).
\btitle{Trace inequalities and quantum entropy: an introductory course}.
\bjournal{Entropy and the quantum}
\bvolume{529}
\bpages{73--140}.
\end{barticle}
\endbibitem

\bibitem[\protect\citeauthoryear{Catoni}{2012}]{catoni2012challenging}
\begin{binproceedings}[author]
\bauthor{\bsnm{Catoni},~\bfnm{Olivier}\binits{O.}}
(\byear{2012}).
\btitle{Challenging the empirical mean and empirical variance: a deviation
  study}.
In \bbooktitle{Annales de l'Institut Henri Poincar{\'e}, Probabilit{\'e}s et
  Statistiques}
\bvolume{48}
\bpages{1148--1185}.
\bpublisher{Institut Henri Poincar{\'e}}.
\end{binproceedings}
\endbibitem

\bibitem[\protect\citeauthoryear{Catoni}{2016}]{catoni2016pac}
\begin{barticle}[author]
\bauthor{\bsnm{Catoni},~\bfnm{Olivier}\binits{O.}}
(\byear{2016}).
\btitle{PAC-Bayesian bounds for the Gram matrix and least squares regression
  with a random design}.
\bjournal{arXiv preprint arXiv:1603.05229}.
\end{barticle}
\endbibitem

\bibitem[\protect\citeauthoryear{Catoni and
  Giulini}{2017}]{catoni2017dimension}
\begin{barticle}[author]
\bauthor{\bsnm{Catoni},~\bfnm{Olivier}\binits{O.}} \AND
  \bauthor{\bsnm{Giulini},~\bfnm{Ilaria}\binits{I.}}
(\byear{2017}).
\btitle{Dimension-free PAC-Bayesian bounds for matrices, vectors, and linear
  least squares regression}.
\bjournal{arXiv preprint arXiv:1712.02747}.
\end{barticle}
\endbibitem

\bibitem[\protect\citeauthoryear{Chen, Gittens and
  Tropp}{2012}]{chen2012masked}
\begin{barticle}[author]
\bauthor{\bsnm{Chen},~\bfnm{Richard~Y}\binits{R.~Y.}},
  \bauthor{\bsnm{Gittens},~\bfnm{Alex}\binits{A.}} \AND
  \bauthor{\bsnm{Tropp},~\bfnm{Joel~A}\binits{J.~A.}}
(\byear{2012}).
\btitle{The masked sample covariance estimator: an analysis using matrix
  concentration inequalities}.
\bjournal{Information and Inference}
\bpages{ias001}.
\end{barticle}
\endbibitem

\bibitem[\protect\citeauthoryear{de~la Pena and Gine}{1999}]{Decoupling}
\begin{bbook}[author]
\bauthor{\bparticle{de~la} \bsnm{Pena},~\bfnm{V.}\binits{V.}} \AND
  \bauthor{\bsnm{Gine},~\bfnm{E.}\binits{E.}}
(\byear{1999}).
\btitle{Decoupling: From dependence to independence}.
\bpublisher{Springer-Verlag}, \baddress{New York}.
\end{bbook}
\endbibitem

\bibitem[\protect\citeauthoryear{Devroye et~al.}{2016}]{devroye2016sub}
\begin{barticle}[author]
\bauthor{\bsnm{Devroye},~\bfnm{Luc}\binits{L.}},
  \bauthor{\bsnm{Lerasle},~\bfnm{Matthieu}\binits{M.}},
  \bauthor{\bsnm{Lugosi},~\bfnm{Gabor}\binits{G.}} \AND
  \bauthor{\bsnm{Oliveira},~\bfnm{Roberto~I}\binits{R.~I.}}
(\byear{2016}).
\btitle{Sub-Gaussian mean estimators}.
\bjournal{The Annals of Statistics}
\bvolume{44}
\bpages{2695--2725}.
\end{barticle}
\endbibitem

\bibitem[\protect\citeauthoryear{Fan, Wang and Zhu}{2016}]{fan2016robust}
\begin{barticle}[author]
\bauthor{\bsnm{Fan},~\bfnm{J.}\binits{J.}},
  \bauthor{\bsnm{Wang},~\bfnm{W.}\binits{W.}} \AND
  \bauthor{\bsnm{Zhu},~\bfnm{Z.}\binits{Z.}}
(\byear{2016}).
\btitle{Robust Low-Rank Matrix Recovery}.
\bjournal{arXiv preprint arXiv:1603.08315}.
\end{barticle}
\endbibitem

\bibitem[\protect\citeauthoryear{Fan et~al.}{2017}]{fan2017farm}
\begin{barticle}[author]
\bauthor{\bsnm{Fan},~\bfnm{Jianqing}\binits{J.}},
  \bauthor{\bsnm{Ke},~\bfnm{Yuan}\binits{Y.}},
  \bauthor{\bsnm{Sun},~\bfnm{Qiang}\binits{Q.}} \AND
  \bauthor{\bsnm{Zhou},~\bfnm{Wen-Xin}\binits{W.-X.}}
(\byear{2017}).
\btitle{FARM-Test: Factor-Adjusted Robust Multiple Testing with False Discovery
  Control}.
\bjournal{arXiv preprint arXiv:1711.05386}.
\end{barticle}
\endbibitem

\bibitem[\protect\citeauthoryear{Gine, Latala and
  Zinn}{2000}]{gine2000exponential}
\begin{barticle}[author]
\bauthor{\bsnm{Gine},~\bfnm{Evarist}\binits{E.}},
  \bauthor{\bsnm{Latala},~\bfnm{Rafal}\binits{R.}} \AND
  \bauthor{\bsnm{Zinn},~\bfnm{Joel}\binits{J.}}
(\byear{2000}).
\btitle{Exponential and moment inequalities for $U$-statistics}.
\bjournal{High Dimensional Probability II}
\bpages{13--38}.
\end{barticle}
\endbibitem

\bibitem[\protect\citeauthoryear{Giulini}{2016}]{giulini2016robust}
\begin{barticle}[author]
\bauthor{\bsnm{Giulini},~\bfnm{Ilaria}\binits{I.}}
(\byear{2016}).
\btitle{Robust Principal Component Analysis in Hilbert spaces}.
\bjournal{arXiv preprint arXiv:1606.00187}.
\end{barticle}
\endbibitem

\bibitem[\protect\citeauthoryear{Han and Liu}{2017}]{han2017statistical}
\begin{barticle}[author]
\bauthor{\bsnm{Han},~\bfnm{Fang}\binits{F.}} \AND
  \bauthor{\bsnm{Liu},~\bfnm{Han}\binits{H.}}
(\byear{2017}).
\btitle{Statistical analysis of latent generalized correlation matrix
  estimation in transelliptical distribution}.
\bjournal{Bernoulli: official journal of the Bernoulli Society for Mathematical
  Statistics and Probability}
\bvolume{23}
\bpages{23}.
\end{barticle}
\endbibitem

\bibitem[\protect\citeauthoryear{Hoeffding}{1948}]{hoeffding1948class}
\begin{barticle}[author]
\bauthor{\bsnm{Hoeffding},~\bfnm{Wassily}\binits{W.}}
(\byear{1948}).
\btitle{A class of statistics with asymptotically normal distribution}.
\bjournal{The Annals of Mathematical Statistics}
\bpages{293--325}.
\end{barticle}
\endbibitem

\bibitem[\protect\citeauthoryear{Hoeffding}{1963}]{hoeffding1963probability}
\begin{barticle}[author]
\bauthor{\bsnm{Hoeffding},~\bfnm{Wassily}\binits{W.}}
(\byear{1963}).
\btitle{Probability inequalities for sums of bounded random variables}.
\bjournal{Journal of the American statistical association}
\bvolume{58}
\bpages{13--30}.
\end{barticle}
\endbibitem

\bibitem[\protect\citeauthoryear{Huber}{1964}]{huber1964robust}
\begin{barticle}[author]
\bauthor{\bsnm{Huber},~\bfnm{P.~J.}\binits{P.~J.}}
(\byear{1964}).
\btitle{Robust estimation of a location parameter}.
\bjournal{The Annals of Mathematical Statistics}
\bvolume{35}
\bpages{73--101}.
\end{barticle}
\endbibitem

\bibitem[\protect\citeauthoryear{Huber}{2011}]{huber2011robust}
\begin{bincollection}[author]
\bauthor{\bsnm{Huber},~\bfnm{Peter~J}\binits{P.~J.}}
(\byear{2011}).
\btitle{Robust statistics}.
In \bbooktitle{International Encyclopedia of Statistical Science}
\bpages{1248--1251}.
\bpublisher{Springer}.
\end{bincollection}
\endbibitem

\bibitem[\protect\citeauthoryear{Hubert, Rousseeuw and
  Van~Aelst}{2008}]{hubert2008high}
\begin{barticle}[author]
\bauthor{\bsnm{Hubert},~\bfnm{M.}\binits{M.}},
  \bauthor{\bsnm{Rousseeuw},~\bfnm{P.~J.}\binits{P.~J.}} \AND
  \bauthor{\bsnm{Van~Aelst},~\bfnm{S.}\binits{S.}}
(\byear{2008}).
\btitle{High-breakdown robust multivariate methods}.
\bjournal{Statistical Science}
\bpages{92--119}.
\end{barticle}
\endbibitem

\bibitem[\protect\citeauthoryear{Joly and Lugosi}{2016}]{joly2016robust}
\begin{barticle}[author]
\bauthor{\bsnm{Joly},~\bfnm{Emilien}\binits{E.}} \AND
  \bauthor{\bsnm{Lugosi},~\bfnm{G{\'a}bor}\binits{G.}}
(\byear{2016}).
\btitle{Robust estimation of {U}-statistics}.
\bjournal{Stochastic Processes and their Applications}
\bvolume{126}
\bpages{3760--3773}.
\end{barticle}
\endbibitem

\bibitem[\protect\citeauthoryear{Kabanava and
  Rauhut}{2017}]{kabanava2017masked}
\begin{barticle}[author]
\bauthor{\bsnm{Kabanava},~\bfnm{Maryia}\binits{M.}} \AND
  \bauthor{\bsnm{Rauhut},~\bfnm{Holger}\binits{H.}}
(\byear{2017}).
\btitle{Masked {Toeplitz} covariance estimation}.
\bjournal{arXiv preprint arXiv:1709.09377}.
\end{barticle}
\endbibitem

\bibitem[\protect\citeauthoryear{Koltchinskii and
  Lounici}{2017}]{koltchinskii2017concentration}
\begin{barticle}[author]
\bauthor{\bsnm{Koltchinskii},~\bfnm{Vladimir}\binits{V.}} \AND
  \bauthor{\bsnm{Lounici},~\bfnm{Karim}\binits{K.}}
(\byear{2017}).
\btitle{Concentration inequalities and moment bounds for sample covariance
  operators}.
\bjournal{Bernoulli}
\bvolume{23}
\bpages{110--133}.
\end{barticle}
\endbibitem

\bibitem[\protect\citeauthoryear{Lepski}{1992}]{lepskii1992asymptotically}
\begin{barticle}[author]
\bauthor{\bsnm{Lepski},~\bfnm{O.}\binits{O.}}
(\byear{1992}).
\btitle{Asymptotically minimax adaptive estimation. {I}: {Upper bounds.
  Optimally adaptive estimates}}.
\bjournal{Theory of Probability \& Its Applications}
\bvolume{36}
\bpages{682--697}.
\end{barticle}
\endbibitem

\bibitem[\protect\citeauthoryear{Lerasle and
  Oliveira}{2011}]{lerasle2011robust}
\begin{barticle}[author]
\bauthor{\bsnm{Lerasle},~\bfnm{Matthieu}\binits{M.}} \AND
  \bauthor{\bsnm{Oliveira},~\bfnm{Roberto~I}\binits{R.~I.}}
(\byear{2011}).
\btitle{Robust empirical mean estimators}.
\bjournal{arXiv preprint arXiv:1112.3914}.
\end{barticle}
\endbibitem

\bibitem[\protect\citeauthoryear{Levina and
  Vershynin}{2012}]{levina2012partial}
\begin{barticle}[author]
\bauthor{\bsnm{Levina},~\bfnm{Elizaveta}\binits{E.}} \AND
  \bauthor{\bsnm{Vershynin},~\bfnm{Roman}\binits{R.}}
(\byear{2012}).
\btitle{Partial estimation of covariance matrices}.
\bjournal{Probability theory and related fields}
\bvolume{153}
\bpages{405--419}.
\end{barticle}
\endbibitem

\bibitem[\protect\citeauthoryear{Lounici}{2014}]{lounici2014high}
\begin{barticle}[author]
\bauthor{\bsnm{Lounici},~\bfnm{K.}\binits{K.}}
(\byear{2014}).
\btitle{High-dimensional covariance matrix estimation with missing
  observations}.
\bjournal{Bernoulli}
\bvolume{20}
\bpages{1029--1058}.
\end{barticle}
\endbibitem

\bibitem[\protect\citeauthoryear{Maronna}{1976}]{maronna1976}
\begin{barticle}[author]
\bauthor{\bsnm{Maronna},~\bfnm{R.~A.}\binits{R.~A.}}
(\byear{1976}).
\btitle{Robust {M}-Estimators of Multivariate Location and Scatter}.
\bjournal{Ann. Statist.}
\bvolume{4}
\bpages{51--67}.
\end{barticle}
\endbibitem

\bibitem[\protect\citeauthoryear{Minsker}{2017}]{minsker2016sub}
\begin{barticle}[author]
\bauthor{\bsnm{Minsker},~\bfnm{Stanislav}\binits{S.}}
(\byear{2017}).
\btitle{{Sub-Gaussian} estimators of the mean of a random matrix with
  heavy-tailed entries}.
\bjournal{To appear in the Annals of Mathematical Statistics}.
\end{barticle}
\endbibitem

\bibitem[\protect\citeauthoryear{Minsker and Wei}{}]{wei2017estimation}
\begin{binproceedings}[author]
\bauthor{\bsnm{Minsker},~\bfnm{Stanislav}\binits{S.}} \AND
  \bauthor{\bsnm{Wei},~\bfnm{Xiaohan}\binits{X.}}
\btitle{Estimation of the covariance structure of heavy-tailed distributions}.
In \bbooktitle{NIPS 2017}.
\end{binproceedings}
\endbibitem

\bibitem[\protect\citeauthoryear{Nemirovski and
  Yudin}{1983}]{Nemirovski1983Problem-complex00}
\begin{bbook}[author]
\bauthor{\bsnm{Nemirovski},~\bfnm{A.}\binits{A.}} \AND
  \bauthor{\bsnm{Yudin},~\bfnm{D.}\binits{D.}}
(\byear{1983}).
\btitle{Problem complexity and method efficiency in optimization}.
\bpublisher{John Wiley \& Sons Inc.}
\end{bbook}
\endbibitem

\bibitem[\protect\citeauthoryear{Polyak and
  Khlebnikov}{2017}]{polyak2017principle}
\begin{barticle}[author]
\bauthor{\bsnm{Polyak},~\bfnm{Boris~Teodorovich}\binits{B.~T.}} \AND
  \bauthor{\bsnm{Khlebnikov},~\bfnm{Mikhail~Vladimirovich}\binits{M.~V.}}
(\byear{2017}).
\btitle{Principle component analysis: robust versions}.
\bjournal{Automation and Remote Control}
\bvolume{78}
\bpages{490--506}.
\end{barticle}
\endbibitem

\bibitem[\protect\citeauthoryear{Tropp}{2012}]{tropp2012user}
\begin{barticle}[author]
\bauthor{\bsnm{Tropp},~\bfnm{Joel~A}\binits{J.~A.}}
(\byear{2012}).
\btitle{User-friendly tail bounds for sums of random matrices}.
\bjournal{Foundations of computational mathematics}
\bvolume{12}
\bpages{389--434}.
\end{barticle}
\endbibitem

\bibitem[\protect\citeauthoryear{Tyler}{1987}]{tyler1987distribution}
\begin{barticle}[author]
\bauthor{\bsnm{Tyler},~\bfnm{D.~E.}\binits{D.~E.}}
(\byear{1987}).
\btitle{A distribution-free {M}-estimator of multivariate scatter}.
\bjournal{The Annals of Statistics}
\bpages{234--251}.
\end{barticle}
\endbibitem

\bibitem[\protect\citeauthoryear{Vershynin}{2010}]{vershynin2010introduction}
\begin{barticle}[author]
\bauthor{\bsnm{Vershynin},~\bfnm{R.}\binits{R.}}
(\byear{2010}).
\btitle{Introduction to the non-asymptotic analysis of random matrices}.
\bjournal{arXiv preprint arXiv:1011.3027}.
\end{barticle}
\endbibitem

\bibitem[\protect\citeauthoryear{Wegkamp and Zhao}{2016}]{wegkamp2016adaptive}
\begin{barticle}[author]
\bauthor{\bsnm{Wegkamp},~\bfnm{Marten}\binits{M.}} \AND
  \bauthor{\bsnm{Zhao},~\bfnm{Yue}\binits{Y.}}
(\byear{2016}).
\btitle{Adaptive estimation of the copula correlation matrix for semiparametric
  elliptical copulas}.
\bjournal{Bernoulli}
\bvolume{22}
\bpages{1184--1226}.
\end{barticle}
\endbibitem

\bibitem[\protect\citeauthoryear{Zhang, Cheng and
  Singer}{2016}]{zhang2016marvcenko}
\begin{barticle}[author]
\bauthor{\bsnm{Zhang},~\bfnm{T.}\binits{T.}},
  \bauthor{\bsnm{Cheng},~\bfnm{X.}\binits{X.}} \AND
  \bauthor{\bsnm{Singer},~\bfnm{A.}\binits{A.}}
(\byear{2016}).
\btitle{{Mar{\v{c}}enko-Pastur} law for {Tylers M-estimator}}.
\bjournal{Journal of Multivariate Analysis}
\bvolume{149}
\bpages{114--123}.
\end{barticle}
\endbibitem

\end{thebibliography}

\end{document}